\newcommand{\bh}[1] {\mathcal{B}(\mathcal{#1})}
\newcommand{\lspan} {\operatorname{span}}
\newcommand{\lcm} {\operatorname{lcm}}
\newcommand{\cA}{\mathcal{A}}
\newcommand{\cB}{\mathcal{B}}
\newcommand{\cD}{\mathcal{D}}
\newcommand{\cF}{\mathcal{F}}
\newcommand{\cH}{\mathcal{H}}
\newcommand{\cJ}{\mathcal{J}}
\newcommand{\cK}{\mathcal{K}}
\newcommand{\cL}{\mathcal{L}}
\newcommand{\cM}{\mathcal{M}}
\newcommand{\cO}{\mathcal{O}}
\newcommand{\cQ}{\mathcal{Q}}
\newcommand{\cS}{\mathcal{S}}
\newcommand{\cT}{\mathcal{T}}
\newcommand{\fA}{\mathfrak{A}}
\newcommand{\fT}{\mathfrak{T}}
\newtheorem{theorem}{Theorem}[section]
\newtheorem{corollary}[theorem]{Corollary}
\newtheorem{proposition}[theorem]{Proposition}    
\newtheorem{lemma}[theorem]{Lemma}
\theoremstyle{definition}
\newtheorem{definition}[theorem]{Definition}
\newtheorem{example}[theorem]{Example}
\newtheorem{remark}[theorem]{Remark}
\numberwithin{equation}{section}
\begin{document}

\title[$C^*$-Envelope of Semigroup Dynamical Systems]{$C^*$-Envelope and Dilation Theory of Semigroup Dynamical Systems}

\author{Boyu Li}
\address{Department of Mathematics and Statistics, University of Victoria, Victoria, B.C. V8W 3R4}
\email{boyuli@uvic.ca}
\date{\today}

\subjclass[2010]{47A20, 47L55, 47L65}
\keywords{C*-envelope, dilation, dynamical system, semigroup}

\begin{abstract} In this paper, we construct, for a certain class of semigroup dynamical systems, two operator algebras that are universal with respect to their corresponding covariance conditions: one being self-adjoint, and another being non-self-adjoint. We prove that the $C^*$-envelope of the non-self-adjoint operator algebra is precisely the self-adjoint one. This result leads to a number of new examples of operator algebras and their $C^*$-envelopes, with many from number fields and commutative rings. We further establish the functoriality of these operator algebras along with their applications.
\end{abstract}

\maketitle

\section{Introduction}

The study of operator algebras associated with dynamical systems has a rich and profound history. The dynamical systems and actions are encoded by operators on Hilbert space, and at the same time, the operator algebra often reflects important properties of the dynamics. For example, a group dynamical system $(\cA,G,\alpha)$ can be encoded by a representation $\pi$ of the $C^*$-algebra $\cA$ and a unitary representation $U$ of $G$. The automorphic action $\alpha$ is then encoded by the covariance relation 
\[U(g)\pi(a) U(g)^* = \pi(\alpha_g(a)).\]
This leads to the construction of the $C^*$-algebra cross products. Properties of the action are often translated to properties of the operator algebra, and vice versa. 

Unlike group dynamical systems, a semigroup dynamical system $(\cA,P,\alpha)$ usually assumes an endomorphic action due to the lack of inverses in semigroups. Because the endomorphic action is no longer automorphic and can be non-unital, a semigroup dynamical system is often encoded by an isometric (instead of unitary) representation $V$ of the semigroup $P$ and  a representation $\pi$ of the $C^*$-algebra $\cA$, such that $(\pi,V)$ satisfies the covariance relation
\[V(p)\pi(a) V(p)^* = \pi(\alpha_p(a)).\]

Many well-known $C^*$-algebras naturally inherit a semigroup dynamical system structure. For example, the Cuntz algebra $\cO_k$ can be constructed by an $\mathbb{N}$-action on the UHF-algebra of type $k^\infty$ \cite{Cuntz1977}. The semigroup $C^*$-algebra associated with a quasi-lattice ordered semigroup, first studied by Nica  \cite{Nica1992}, also encodes a semigroup dynamical system structure, where the semigroup $P$ acts on the diagonal algebra $\cD_P$ by $\ast$-endomorphisms \cite{LacaRaeburn1996}.

Semigroup dynamical systems help generate examples not only of $C^*$-algebras but also of non-self-adjoint operator algebras. Peters  \cite{Peters1984} was the first to consider relaxing the covariance relation to accommodate the absence of the adjoint in a non-self-adjoint operator algebra. In particular, there are two possibilities that replace the covariance relation: either 
\[V(p)\pi(a) = \pi(\alpha_p(a)) V(p);\]
or,
\[\pi(a) V(p) =V(p) \pi(\alpha_p(a)).\]
Peters focused on the latter relation and defines a non-self-adjoint operator algebra that he called the semicrossed product. This notion has since been generalized, most notably to semigroup dynamical systems over abelian lattice ordered semigroups  \cite{DFK2014}. We, on the other hand, focus on the former covariance relation that occurs naturally by multiplying $V(p)$ on the right of the covariance condition $V(p)\pi(a) V(p)^* = \pi(\alpha_p(a))$.

Non-self-adjoint and self-adjoint operator algebras are connected by $C^*$-envelope: a non-self-adjoint operator algebra can always be embedded in a smallest $C^*$-algebra, known as its $C^*$-envelope. The existence of the $C^*$-envelope was conjectured by Arveson in  \cite{ArvesonSubalgI} and proved by Hamana \cite{Hamana1979} (see also \cite{MS1998, DK2015}). Since then, there has been much research devoted to the computation of $C^*$-envelopes for various non-self-adjoint operator algebras \cite{DK2011, DK2012, EvgElias2012, KP2013, DFK2014, Kakariadis2016, KR2019, AdamElias2020}. For example, for an abelian lattice ordered semigroup $P$ inside a group $G=P^{-1}P$, the $C^*$-envelope of the semicrossed product $\cA\rtimes_\alpha^\cF P$ is often related to the crossed product $\tilde{\cA}\rtimes_\beta G$ \cite{EvgElias2012, Fuller2013, DP2015}, where $\tilde{\cA}$ is a direct limit from the endomorphic actions of $\cA$, and $\beta$ is an automorphic $G$-action. One may refer to \cite{DFK2014} for a more detailed description (see also  \cite{Humeniuk2020}).

This paper is motivated by a study of the $C^*$-envelope of a specific semigroup dynamical system arising from number fields \cite{Jaspar2016}. Let $R$ be the algebraic integers in a number field $K$. Cuntz, Deninger, and Laca constructed a Toeplitz-type $C^*$-algebra $\mathfrak{T}_R$ \cite{CDL2013} (that turns out to be the semigroup $C^*$-algebra for the $ax+b$ semigroup $R\rtimes R^\times$, see also \cite{XLi2012}). One can also construct a non-self-adjoint operator algebra analogue through a semigroup dynamical system, denoted by $\cA_R \rtimes^{iso}_\alpha P$. It is shown in \cite{Jaspar2016} that the $C^*$-envelope of $\cA_R \rtimes^{iso}_\alpha P$ is precisely $\mathfrak{T}_R$. In this paper, we generalize this result to a large class of semigroup dynamical systems. 

We shall start by reviewing the basic setups of semigroup dynamical systems in Section \ref{sec.dym}. For each semigroup dynamical system $(\cA,P,\alpha)$, we associate two operator algebras in Section \ref{sec.rep}: a self-adjoint $C^*$-algebra $\cA\rtimes_\alpha^* P$, which is universal with respect to isometric covariant representations; and a non-self-adjoint operator algebra $\cA\rtimes_\alpha^{iso} P$, which is universal with respect to isometric right covariant representations. The main result (Theorem \ref{thm.envelope}) states that the $C^*$-algebra $\cA\rtimes_\alpha^* P$ is the $C^*$-envelope of $\cA\rtimes_\alpha^{iso} P$. Among several approaches to compute the $C^*$-envelope, Dritschel and McCullough \cite{DM2005} prove that the $C^*$-envelope is generated by maximal representations. Our proof follows this approach: we first show that isometric covariant representations are maximal representations of $\cA\rtimes^{iso}_\alpha P$ (Proposition \ref{prop.maximal}). We then prove the converse: every isometric right covariant representation that is not isometric covariant has a non-trivial dilation (Proposition \ref{prop.dilation.base}). Consequently, isometric covariant representations are precisely the maximal representations, and by the result of Dritschel and McCullough, they generate the $C^*$-envelope (Theorem \ref{thm.envelope}). In Section \ref{sec.functor}, we study the functoriality for both the non-self-adjoint  $\cA\rtimes^{iso}_\alpha P$ and the $C^*$-algebra  $\cA\rtimes^*_\alpha P$. We prove that the functoriality of the $C^*$-algebra by injective $*$-homomorphism is equivalent to the functoriality of the non-self-adjoint operator algebra by complete isometry (Corollary \ref{cor.functor}). 

Our result introduces a rich collection of examples of non-self-adjoint operator algebras and their $C^*$-envelopes. In Section \ref{sec.ex}, we explore the examples arising from the semigroup $R\rtimes M$, where $R$ is a commutative ring and $M$ is a left-cancellative sub-semigroup of $R^\times$. We introduce a semigroup dynamical system $(\cA_{R,M}, M, \alpha)$ and prove that the $C^*$-algebra $\cA_{R,M}\rtimes^*_\alpha M$ is precisely the semigroup $C^*$-algebra $C^*(R\rtimes M)$ in the sense of Xin Li (Proposition \ref{prop.iso1}). Our main result applies to show that the $C^*$-envelope of $\cA_{R,M}\rtimes_\alpha^{iso} M$ is the semigroup $C^*$-algebra $C^*(R\rtimes M)$.
Finally, in Section \ref{sec.misc}, we consider additional examples and applications of our main result. 

I would like to thank Kenneth Davidson, Adam Dor-on, Chris Bruce, Evgenios Kakariadis, and Nadia Larsen for their many helpful comments, and Marcelo Laca for directing me to consider the functoriality. 

\section{Semigroup Dynamical Systems}\label{sec.dym}

\begin{definition} A \emph{semigroup dynamical system} is a triple $(\cA, P, \alpha)$ where
\begin{enumerate}
\item $\cA$ is a unital $C^*$-algebra;
\item $P$ is a semigroup with an identity;
\item $\alpha$ is a $P$-action on $\cA$ by $\ast$-endomorphisms. 
\end{enumerate}
\end{definition} 

We use $1\in\cA$ to denote the identity element of $\cA$ and $e\in P$ to denote the identity element of $P$. Though not required by the definition, \textbf{we assume throughout the paper that $P$ is an abelian left-cancellative semigroup}. The endomorphism $\alpha_p$ is a $\ast$-homomorphism from $\cA$ to itself, and it is neither assumed to be automorphic nor unital. The dynamical system is called injective if $\alpha_p$ is an injective $\ast$-endomorphism for all $p\in P$.

Recall that a subalgebra $\cB\subset \cA$ is called \emph{hereditary} if for any positive $b\in \cB$ and any $a\in \cA$ with $0\leq a\leq b$, we have $a\in \cB$ as well. We assume further that $\alpha_p(\cA)$, the image of each $\alpha_p$, is hereditary. Injective semigroup dynamical systems where each $\alpha_p(\cA)$ is hereditary are well-studied. For example, the following characterization is taken from Murphy \cite[Lemma 4.1]{Murphy1996}.

\begin{proposition}\label{prop.hereditary} Let $(\cA,P,\alpha)$ be an injective semigroup dynamical system. Then the following are equivalent:
\begin{enumerate}
\item $\alpha_p(\cA)$ is hereditary.
\item $\alpha_p(\cA)=\alpha_p(1)\cA\alpha_p(1)$.
\end{enumerate}
\end{proposition}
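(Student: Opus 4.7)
The plan is to prove the two implications separately, using the fact that $q := \alpha_p(1)$ is a projection in $\cA$ (since $\alpha_p$ is a unital-to-unital $\ast$-homomorphism into $\cA$ when restricted to its image, but more simply, $\alpha_p(1) = \alpha_p(1\cdot 1) = \alpha_p(1)^2$ and $\alpha_p(1)^* = \alpha_p(1)$).

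For $(2)\Rightarrow(1)$, the task reduces to the well-known fact that a corner $q\cA q$ is a hereditary subalgebra of $\cA$. The key step is: if $0\le a\le b$ with $b=qbq$, then $(1-q)b(1-q)=0$ forces $(1-q)a(1-q)=0$. Setting $x=a^{1/2}(1-q)$ gives $x^*x=(1-q)a(1-q)=0$, so $x=0$, i.e., $a^{1/2}=a^{1/2}q$ and (by self-adjointness) also $a^{1/2}=qa^{1/2}$. Multiplying then yields $a = a^{1/2}\cdot a^{1/2}=(qa^{1/2})(a^{1/2}q)=qaq\in q\cA q$, which is exactly the heredity condition applied to $\alpha_p(\cA)=q\cA q$.

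For $(1)\Rightarrow(2)$, the inclusion $\alpha_p(\cA)\subseteq q\cA q$ is automatic, since for any $a\in\cA$ one has $\alpha_p(a)=\alpha_p(1)\alpha_p(a)\alpha_p(1) = q\alpha_p(a)q$. For the reverse inclusion, I would first reduce to positive elements: any element of $q\cA q$ is a complex linear combination of four positive elements of $q\cA q$ (since $q\cA q$ is itself a $C^*$-subalgebra). Then for a positive $b=qaq\in q\cA q$, the sandwich $0\le qaq\le \|a\|\,q = \alpha_p(\|a\|\cdot 1)\in\alpha_p(\cA)$ lets me apply heredity of $\alpha_p(\cA)$ directly to conclude $qaq\in\alpha_p(\cA)$, proving the reverse inclusion.

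There is no real obstacle here; the only point requiring a moment of care is verifying that $\alpha_p(1)$ is a projection and that heredity passes through the positive-cone decomposition of $q\cA q$. Injectivity of $\alpha_p$ is not used in this equivalence itself and plays no role in the argument; it is a standing hypothesis from Murphy's setup that ensures good behavior elsewhere.
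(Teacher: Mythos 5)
Your proof is correct. The paper itself gives no argument for this proposition—it is quoted directly from Murphy's Lemma 4.1—so there is nothing internal to compare against; your two implications (the standard fact that a corner $q\cA q$ is hereditary via $x^*x=(1-q)a(1-q)=0$, and the converse by squeezing a positive element of $q\cA q$ below $\|b\|\,q=\alpha_p(\|b\|\cdot 1)\in\alpha_p(\cA)$ after the four-positive-elements decomposition) are exactly the standard route and fill the gap cleanly. Your observation that injectivity of $\alpha_p$ is not needed for this particular equivalence is also accurate; the only facts used are that $\cA$ is unital, that $\alpha_p(1)$ is a projection, and that $\alpha_p(\cA)$ is a $C^*$-subalgebra (automatic for images of $\ast$-homomorphisms).
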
  

Finally, for each $p\in P$, $\alpha_p(1)$ satisfies $\alpha_p(1)=\alpha_p(1)^2=\alpha_p(1)^*$, so that it is an orthogonal projection. In many cases, $\{\alpha_p(1)\}$ is a family of commuting projections.

\begin{definition} Let $(\cA,P,\alpha)$ be a semigroup dynamical system. We say that \emph{its projections are aligned} if for any $p,q\in P$, $\alpha_p(1)$ and $\alpha_q(1)$ commute. 
\end{definition} 

Throughout this paper, unless stated otherwise, \textbf{we always assume the semigroup dynamical system $(\cA,P,\alpha)$ is injective, $\alpha_p(\cA)$ is hereditary for all $p\in P$, and its projections are aligned. }

\begin{example} Let $X$ be a compact Hausdorff space, and $\sigma$ be an injective $P$-action on $X$ so that $X_p=\sigma_p(X)$ is a clopen subset of $X$. This induces a $P$-action $\alpha$ on $C(X)$ by $\alpha_p(f)(x)=f(\sigma_p^{-1}(x))$, which is a $\ast$-endomorphism from $C(X)$ onto $C(X_p)$. As a result, we obtain an injective semigroup dynamical system $(C(X),P,\alpha)$ where each $\alpha_p(C(X))=C(X_p)$ is an ideal of $C(X)$ and thus hereditary. Each $\alpha_p(1)$ is the characteristic function for $X_p$, and together they form a commuting family of orthogonal projections. 

As a special case when $X$ is a singleton, $\cA=\mathbb{C}I$ and $\alpha_p=id$ is the identity maps for all $p\in P$. We obtain a trivial semigroup dynamical system $(\mathbb{C}, P, id)$. 
\end{example} 

\begin{example} Let $(G,P)$ be a quasi-lattice ordered group, and $C^*(P)$ be its universal semigroup $C^*$-algebra with respect to isometric Nica-covariant representation 
 \cite{Nica1992}. Let $V$ be the universal Nica-covariant representation. Define the diagonal subalgebra  $\cD_P$ to be 
 \[\cD_P=\overline{\lspan}\{V(p)V(p)^*: p\in P\}.\]
Then $C^*(P)$ naturally encodes an injective semigroup dynamical system structure $(\cD_P,P,\alpha)$, where $\alpha_p(x)=V(p)x V(p)^*$. Here, $\alpha_p(\cD_P)$ is an ideal of $\cD_P$ and is thus hereditary. The Nica-covariance condition ensures that $\alpha_p(1)=V(p)V(p)^*$ and $\alpha_q(1)=V(q)V(q)^*$ commute (their product is either $0$ or $\alpha_{p\vee q}(1)$). 
\end{example} 

\begin{example} Fix $k\geq 2$ and let $\cA$ be the UHF algebra of type $k^\infty$. Let $\alpha_1(a)=e_{11}\otimes a$, which is a $\ast$-endomorphism on $\cA$. This defines an injective semigroup dynamical system $(\cA, \mathbb{N}, \alpha)$, which is related to the celebrated Cuntz algebra $\cO_k$ \cite{Cuntz1977}. Unlike the previous two examples, $\alpha_k(\cA)$ is not an ideal of $\cA$ (since the UHF algebra is simple). However, one can check that the image $\alpha_1(\cA)$ is hereditary. Moreover, orthogonal projections $\{\alpha_n(1)\}$ clearly commute since $\alpha_n(1)\alpha_m(1)=\alpha_{\max\{n,m\}}(1).$  
\end{example} 

Having $\alpha_p(\cA)$ be hereditary subalgebras in $\cA$ allows us to construct a surjective left-inverse $\alpha_{p^{-1}}:\cA\to\cA$. For any $p\in P$, define 
\[\alpha_{p^{-1}}(a)=\alpha_p^{-1}(\alpha_p(1)a\alpha_p(1)).\]
Here, $\alpha_p(1)a\alpha_p(1)\in\alpha_p(\cA)$, and the injectivity of $\alpha_p$ guarantees that the inverse exists, so that $\alpha_{p^{-1}}$ is a well-defined map on $\cA$. Moreover, for any $a\in\cA$, $\alpha_p(a)=\alpha_p(1)\alpha_p(a)\alpha_p(1)$. Therefore, $\alpha_{p^{-1}}(\alpha_p(a))=a$, so that $\alpha_{p^{-1}}$ is a surjective left-inverse of $\alpha_p$. The map $\alpha_{p^{-1}}$ is usually not multiplicative, since for any $a,b\in \cA$, it is not necessarily true that $\alpha_p(1)ab\alpha_p(1)= \alpha_p(1)a\alpha_p(1)b\alpha_p(1)$. However, for $a\in\cA$ that commute with $\alpha_p(1)$, and for any $b\in \cA$, we have 
\[\alpha_p(1)ab\alpha_p(1)=\alpha_p(1)(\alpha_p(1)a)b\alpha_p(1)=\alpha_p(1)a\alpha_p(1)b\alpha_p(1).\]
Apply the inverse map $\alpha_p^{-1}$, we have $\alpha_{p^{-1}}(ab)=\alpha_{p^{-1}}(a)\alpha_{p^{-1}}(b)$. Similarly, if $b$ commutes with $\alpha_p(1)$, we also have  $\alpha_{p^{-1}}(ab)=\alpha_{p^{-1}}(a)\alpha_{p^{-1}}(b)$.

The following technical lemma is essential in our later calculations. 

\begin{lemma}\label{lm.compare} Let $(\cA,P,\alpha)$ be an injective semigroup dynamical system over an abelian semigroup $P$ where its projections are aligned. For any $p,q\in P$, let $H=\alpha_{p^{-1}}(\alpha_q(1))$. Then $H$ is an orthogonal projection, and $\alpha_q(1)\leq H$. 
\end{lemma}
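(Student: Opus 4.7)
The plan is to unfold the definition of $\alpha_{p^{-1}}$ and transport the desired inequality across $\alpha_p$, using injectivity. Recall that by definition,
\[H = \alpha_{p^{-1}}(\alpha_q(1)) = \alpha_p^{-1}\bigl(\alpha_p(1)\alpha_q(1)\alpha_p(1)\bigr).\]
Because the projections are aligned, $\alpha_p(1)$ and $\alpha_q(1)$ commute, so $\alpha_p(1)\alpha_q(1)\alpha_p(1) = \alpha_p(1)\alpha_q(1)$ is itself a projection (a product of two commuting projections). Moreover $0 \leq \alpha_p(1)\alpha_q(1) \leq \alpha_p(1) \in \alpha_p(\cA)$ and $\alpha_p(\cA)$ is hereditary, so this projection lies in $\alpha_p(\cA)$ and is in the domain of the well-defined map $\alpha_p^{-1} : \alpha_p(\cA) \to \cA$. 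Since $\alpha_p$ is injective, its restriction to $\alpha_p(\cA)$ has an inverse that is a $*$-homomorphism, and therefore carries projections to projections; this gives that $H$ is a projection in $\cA$.

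For the inequality $\alpha_q(1) \leq H$, I would push the comparison through $\alpha_p$. Because $\alpha_p$ is an injective $*$-homomorphism between $C^*$-algebras it is an order isomorphism onto its image, so it suffices to show
\[\alpha_p(\alpha_q(1)) \leq \alpha_p(H).\]
The right-hand side is $\alpha_p(1)\alpha_q(1)$, since $\alpha_p \circ \alpha_p^{-1}$ is the identity on $\alpha_p(\cA)$. The left-hand side is $\alpha_{pq}(1)$ by the semigroup action axiom. Since $P$ is abelian, we have both $\alpha_{pq}(1) = \alpha_p(\alpha_q(1)) \leq \alpha_p(1)$ and $\alpha_{pq}(1) = \alpha_q(\alpha_p(1)) \leq \alpha_q(1)$, using that $\ast$-homomorphisms are order-preserving and $\alpha_q(1), \alpha_p(1) \leq 1$. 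Since $\alpha_p(1)$ and $\alpha_q(1)$ are commuting projections, their infimum in $\cA$ is exactly $\alpha_p(1)\alpha_q(1)$, and the required comparison $\alpha_{pq}(1) \leq \alpha_p(1)\alpha_q(1)$ follows.

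I do not anticipate any serious obstacle: the statement is essentially a bookkeeping exercise that pulls together three ingredients already in the setup — hereditariness of $\alpha_p(\cA)$ (to ensure the input lies in the domain of $\alpha_p^{-1}$), injectivity of $\alpha_p$ (to convert an inequality in $\cA$ into one in $\alpha_p(\cA)$), and alignment together with the abelian semigroup law (to identify the meet of $\alpha_p(1)$ and $\alpha_q(1)$ with their product and to commute $\alpha_p$ past $\alpha_q$). If there is a subtle point, it is the use that $\alpha_p$ is an order isomorphism onto its image, which is standard for injective $*$-homomorphisms but is the pivot of the argument.
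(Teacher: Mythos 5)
Your proof is correct and follows essentially the same route as the paper: both establish that $H$ is a projection by transporting self-adjointness and idempotence through the injective map $\alpha_p$, and both reduce the desired inequality to $\alpha_{pq}(1)\leq\alpha_p(1)\alpha_q(1)$ inside $\alpha_p(\cA)$ before pulling it back along $\alpha_p^{-1}$, which is an order isomorphism onto its image. The only cosmetic difference is in the last step: the paper verifies the inequality by computing $\alpha_{pq}(1)\alpha_p(H)=\alpha_{pq}(1)$ directly, whereas you observe that the projection $\alpha_{pq}(1)$ lies below each of the commuting projections $\alpha_p(1)$ and $\alpha_q(1)$ and hence below their product.
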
 

\begin{proof} Since the projections are aligned, $\alpha_p(1)$ and $\alpha_q(1)$ commute and \[\alpha_p(1)\alpha_q(1)=\alpha_q(1)\alpha_p(1)=\alpha_p(1)\alpha_q(1)\alpha_p(1).\] 
Therefore, $\alpha_p(H)=\alpha_p(H^*)=\alpha_p(H^2)$, and the injectivity of $\alpha_p$ ensures that $H=H^*=H^2$ is an orthogonal projection.

To compare $\alpha_q(1)$ and $H$, we can first compare $\alpha_p(\alpha_q(1))=\alpha_{pq}(1)$ with $\alpha_p(H)=\alpha_p(1)\alpha_q(1)$. 
Since $P$ is abelian,  $\alpha_{pq}(1)=\alpha_{qp}(1)$ and \[\alpha_{pq}(1)\alpha_p(H)=\alpha_p(\alpha_q(1))\alpha_p(1) \alpha_q(1)=\alpha_{qp}(1)\alpha_q(1)=\alpha_{pq}(1).\]
Because $\alpha_p$ is injective, apply the inverse map $\alpha_p^{-1}$, we obtain that $\alpha_q(1)\leq H$. 
\end{proof} 

\section{Representations of Semigroup Dynamical Systems}\label{sec.rep}

There are many ways to construct an operator algebra from a dynamical system. For $C^*$-algebras, the most common way is through the following definition. This coincides with Murphy's definition \cite{Murphy1996} of $C^*(\cA,P,\alpha)$ where he proved its universal property with respect to covariant representations.

\begin{definition} An \emph{isometric covariant representation} for $(\cA,P,\alpha)$ is a pair $(\pi, V)$ where,
\begin{enumerate}
\item $\pi:\cA\to\bh{H}$ is a unital $\ast$-homomorphism.
\item $V:P\to\bh{H}$ is an isometric representation with $V(e)=I$.
\item For any $p\in P$ and $a\in\cA$, we have 
\[V(p)\pi(a)V(p)^* =\pi(\alpha_p(a)).\]
\end{enumerate}

We denote the universal $C^*$-algebra for isometric covariant representations by $\cA\rtimes_\alpha^* P$. 
\end{definition} 

The $C^*$-algebra $\cA\rtimes_\alpha^* P$ arising from a semigroup dynamical system is very well studied. One may refer to \cite{Murphy1996, Laca2000, Larsen2010} for related constructions and discussions. 

\begin{remark} Since $P$ is abelian, it is automatically an Ore semigroup, which embeds inside a group $G$ with $G=P^{-1}P$. A result of Laca \cite{Laca2000} proves that isometric covariant representations can be further dilated to a unitary covariant representation of an automorphic dynamical system $(\cB,G,\beta)$, and $\cA\rtimes_\alpha^* P$ is a full corner of the group crossed product $\cB\rtimes_\beta G$. 
\end{remark} 

There is a non-self-adjoint analogue of this definition: by multiplying $V(p)$ on the right of the condition (3), the adjoint $V(p)^*$ is replaced by $V(p)$ on the right hand side.

\begin{definition} An \emph{isometric right covariant representation} for $(\cA,P,\alpha)$ is given by a pair $(\pi, V)$ where,
\begin{enumerate}
\item $\pi:\cA\to\bh{H}$ is a unital $\ast$-homomorphism.
\item $V:P\to\bh{H}$ is an isometric representation with $V(e)=I$.
\item For any $p\in P$ and $a\in\cA$, we have 
\[V(p)\pi(a) =\pi(\alpha_p(a))V(p).\]
\end{enumerate}
\end{definition} 

There is a dual definition of left covariance where the covariance condition is replaced by $\pi(a)W(p) =W(p)\pi(\alpha_p(a))$, which leads to a rich literature of non-self-adjoint operator algebras constructed using this alternative convariance condition. When $\cA$ is a $C^*$-algebra, this dual definition is equivalent to $W(p)^* \pi(a) = \pi(\alpha_p(a)) W(p)^*$. When $P$ is abelian, one can define $V(p)=W(p)^*$, which is also a representation of $P$. Consequently, an isometric right covariant representation $(\pi,V)$ corresponds to an co-isometric left covariant pair $(\pi,W)$ (that is, $W$ is an co-isometric representation of $P$ and $\pi(a) W(p)=W(p)\pi(\alpha_p(a))$). The non-self-adjoint operator algebra associated such co-isometric right covariant pair was considered in the context where the action $\alpha$ is automorphic \cite[Remark 3.3.2]{DFK2014} or unital \cite{Kakariadis2016}. One may refer to \cite{Peters1984, DFK2014} for more detailed discussion for operator algebras arising from this alternative covariance condition.

As pointed out by Peters \cite{Peters1984}, one concern for the left covariance relation is that it forces $\ker(\alpha_p)\subset\ker(\pi)$. This is not an issue here since the semigroup action $\alpha$ is injective.

One can associate a non-self-adjoint operator algebra which is universal for isometric right covariant representations. Define $c_{00}(P,\cA)$ to be the linear span of $a\otimes \delta_p$, with the multiplication 
\[(a\otimes \delta_p) \cdot (b\otimes \delta_q)=a\alpha_p(b) \otimes \delta_{pq}.\]
For any isometric right covariant representation $(\pi,V)$ on $\bh{H}$, it defines a homomorphism $\Phi_{\pi,V}:c_{00}(P,\cA)\to\bh{H}$ by $\Phi_{\pi,V}(a\otimes \delta_p)=\pi(a)V(p)$ and extend linearly. For each $n\geq 1$, we can similarly define a homomorphism $\Phi_{\pi,V}^{(n)}: \cM_n(c_{00}(P,\cA))\to\cB(\cH^n)$ by $\Phi_{\pi,V}^{(n)}([a_{i,j}]\otimes \delta_p)=[\pi(a_{i,j})V(p)]$. For $x\in\cM_n(c_{00}(P,\cA))$, define
\[\|x\|_n=\sup\{\|\Phi_{\pi,V}^{(n)}(x)\|: (\pi,V) \mbox{ is isometric right covariant}\}.\]
Here, we have to assume a priori that there exists at least one isometric right covariant representation to avoid taking the supremum over an empty set. This is rarely a concern for us, since isometric covariant representations are automatically isometric right covariant and all the examples we considered have a well-defined universal $C^*$-algebra $\cA\rtimes^*_\alpha P$ for isometric covariant representations. 
A faithful isometric covariant representation of $\cA\rtimes^*_\alpha P$ will also assign a non-zero value for $\|x\|_n$ except when $x=0$. 
Also notice that since $V$ is isometric and $\pi$ is $\ast$-homomorphic, for $x=\sum_{i=1}^k A_i\otimes \delta_{p_i}$, we have $\|\Phi_{\pi,V}^{(n)}(x)\|\leq \sum_{i=1}^k \|\pi^{(n)}(A_i)\|<\infty$, so that the supremum indeed exists. This family of matrix norms $\|\cdot\|_n$ puts a non-self-adjoint operator algebra structure over $c_{00}(P,\cA)$, and we denote $\cA\rtimes_\alpha^{iso} P$ the (non-self-adjoint) norm closure of $c_{00}(P,\cA)$ with respect to this universal norm.  

We already mentioned that an isometric covariant representation $(\pi,V)$ is automatically an isometric right covariant representation. However, the converse is generally false, and it requires one extra condition. 

\begin{proposition}\label{prop.condition} Let $\pi:\cA\to\bh{H}$ be a unital $\ast$-homomorphism and $V:P\to\bh{H}$ be an isometric representation. Then the following are equivalent:
\begin{enumerate}
\item For all $p\in P$ and $a\in \cA$, \[V(p)\pi(a)V(p)^*=\pi(\alpha_p(a)).\]
\item For all $p\in P$ and $a\in \cA$, 
\[V(p)\pi(a)=\pi(\alpha_p(a))V(p),\]
and $V(p)V(p)^*=\pi(\alpha_p(1))$. 
\end{enumerate}
\end{proposition}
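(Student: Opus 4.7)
\medskip

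\noindent\textbf{Proof proposal.} The plan is to verify both implications by direct algebraic manipulation, using only that $V(p)$ is an isometry (so $V(p)^*V(p)=I$) and that $\pi$ and $\alpha_p$ are $\ast$-homomorphisms with $\pi(1)=I$.

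For $(1)\Rightarrow(2)$, I would first specialize $a=1$ in condition (1): since $\pi$ is unital this immediately gives $V(p)V(p)^*=\pi(\alpha_p(1))$. To recover the intertwining relation, I would multiply condition (1) on the right by $V(p)$ and cancel $V(p)^*V(p)=I$ to obtain $V(p)\pi(a)=\pi(\alpha_p(a))V(p)$.

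For $(2)\Rightarrow(1)$, the key step is to multiply the intertwining identity on the right by $V(p)^*$ and then use the new relation $V(p)V(p)^*=\pi(\alpha_p(1))$. This yields
\[
V(p)\pi(a)V(p)^* = \pi(\alpha_p(a))V(p)V(p)^* = \pi(\alpha_p(a))\pi(\alpha_p(1)) = \pi\bigl(\alpha_p(a)\alpha_p(1)\bigr).
\]
The final simplification is that $\alpha_p$ is multiplicative, so $\alpha_p(a)\alpha_p(1)=\alpha_p(a\cdot 1)=\alpha_p(a)$, giving (1). This step does not require $\alpha_p$ to be unital; it only uses that $1$ is a unit in $\cA$.

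I do not expect any real obstacle: neither direction uses injectivity of $\alpha_p$, hereditariness of the image, or alignment of the projections. The content of the proposition is simply that, in the presence of an isometric $V$, the full covariance $V(p)\pi(a)V(p)^*=\pi(\alpha_p(a))$ factors as the right-covariance intertwiner together with the identification of the range projection $V(p)V(p)^*$ with $\pi(\alpha_p(1))$.
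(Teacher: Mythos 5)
Your proposal is correct and follows essentially the same route as the paper's proof: specializing $a=1$ and right-multiplying by $V(p)$ (using $V(p)^*V(p)=I$) for one direction, and combining the intertwiner with $V(p)V(p)^*=\pi(\alpha_p(1))$ plus multiplicativity of $\pi\circ\alpha_p$ for the converse. Your closing remark that no injectivity, hereditariness, or alignment hypotheses are needed is also accurate.
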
 

\begin{proof} Assuming (1), since $V(p)^*V(p)=I$, 
\[V(p)\pi(a)=V(p)\pi(a)V(p)^*V(p)=\pi(\alpha_p(a))V(p).\]

Moreover, set $a=1\in \cA$, 
\[V(p)V(p)^*=V(p)\pi(1)V(p)^*=\pi(\alpha_p(1)).\]

Conversely, for any $p\in P$ and $a\in \cA$,
\begin{align*}
V(p)\pi(a)V(p)^* &= \pi(\alpha_p(a))V(p)V(p)^* \\
&= \pi(\alpha_p(a))\pi(\alpha_p(1)) \\
&= \pi(\alpha_p(a\cdot 1))=\pi(\alpha_p(a)). \qedhere
\end{align*}
\end{proof} 

\begin{example} In general, there are always isometric right covariant representations that are not isometric covariant.  Take an isometric covariant representation $(\rho, W)$ of $(\cA,P,\alpha)$ on $\bh{H}$. Let $\ell^2(P)$ be the Hilbert space with orthonormal basis $\{e_p:p\in P\}$. Define $\pi:\cA\to\cB(\cH\otimes \ell^2(P))$ by $\pi(a)=\rho(a)\otimes I_{\ell^2(P)}$. Define $V:P\to\cB(\cH\otimes\ell^2(P))$ by $V(p)=W(p) \otimes \lambda_p$, where $\lambda_p e_q=e_{pq}$ is the left-regular representation of $P$ on $\ell^2(P)$. We claim that $(\pi,V)$ is an isometric right covariant representation of $(\cA,P,\alpha)$, but it is not isometric covariant. It is clear that $\pi$ is a unital $*$-homomorphism and $V$ is an isometric representation with $V(e)=I$. For any $a\in \cA$ and $p\in P$, 
\begin{align*}
V(p)\pi(a)&=(W(p)\otimes \lambda_p)(\rho(a)\otimes I_{\ell^2(P)}) \\
&= (W(p)\rho(a)) \otimes \lambda_p \\
&= (\rho(\alpha_p(a)) W(p))\otimes \lambda_p \\
&= \pi(\alpha_p(a)) V(p).
\end{align*}
However, $V(p)V(p)^*=W(p)W(p)^*\otimes\lambda_p\lambda_p^*$ and $\pi(\alpha_p(1))=\rho(\alpha_p(1))\otimes I_{\ell^2(P)}$. Even though $W(p)W(p)^*=\rho(\alpha_p(1))$, $\lambda_p\lambda_p^*\neq I_{\ell^2(P)}$ unless $p$ is invertible in $P$. Therefore, whenever $P$ contains an element $a\in P$ without an inverse, $(\pi,V)$ is an isometric right covariant representation but not isometric covariant. 
\end{example}
In the case when $V(p)V(p)^*\neq \pi(\alpha_p(1))$ for an isometric right covariant representation, the orthogonal projection $V(p)V(p)^*$ is in fact always dominated by $\pi(\alpha_p(1))$:

\begin{lemma}\label{lm.proj.ineq} Let $(\pi,V)$ be an isometric right covariant representation. Then for any $p\in P$, $V(p)V(p)^*\leq \pi(\alpha_p(1))$.
\end{lemma}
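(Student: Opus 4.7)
The plan is to leverage the right covariance relation with $a = 1$ to produce an absorption identity, and then use a standard projection inequality argument.

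First, I would apply the right covariance condition to $a = 1 \in \cA$. Since $\pi$ is unital, this yields
\[V(p) = V(p)\pi(1) = \pi(\alpha_p(1)) V(p).\]
Multiplying on the right by $V(p)^*$ then gives
\[V(p)V(p)^* = \pi(\alpha_p(1)) \, V(p)V(p)^*.\]

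Next, I would observe that both $Q := V(p)V(p)^*$ (the range projection of the isometry $V(p)$) and $E := \pi(\alpha_p(1))$ (since $\alpha_p(1)$ is a selfadjoint idempotent in $\cA$ and $\pi$ is a $*$-homomorphism) are orthogonal projections. The identity $Q = EQ$ obtained above, together with its adjoint $Q = QE$, shows that $Q$ and $E$ commute and that $Q = EQE$. Since $Q \leq I$, this gives
\[Q = EQE \leq E \cdot I \cdot E = E,\]
which is exactly the required inequality $V(p)V(p)^* \leq \pi(\alpha_p(1))$.

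There is essentially no obstacle here; the statement is a direct consequence of the right covariance relation applied at the unit. The content is the observation that, even though an isometric right covariant representation need not satisfy $V(p)V(p)^* = \pi(\alpha_p(1))$ (as illustrated in the preceding example with the left regular representation), the range projection of $V(p)$ is always dominated by the projection implemented by the endomorphism on $1$.
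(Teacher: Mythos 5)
Your proof is correct and follows essentially the same route as the paper: both derive the absorption identity $V(p)V(p)^* = \pi(\alpha_p(1))V(p)V(p)^*$ from the covariance relation at $a=1$ and unitality of $\pi$, then conclude from the fact that both operators are orthogonal projections. You merely spell out the final projection-comparison step ($Q=EQE\leq E$) in more detail than the paper does.
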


\begin{proof} Since $\pi$ is unital, 
\begin{align*}
V(p)V(p)^* &= V(p)\pi(1)V(p)^* \\
&= \pi(\alpha_p(1))V(p)V(p)^*. 
\end{align*}
Since $V(p)V(p)^*$ and $\pi(\alpha_p(1))$ are orthogonal projections, this implies the desired inequality. \end{proof}

The following lemma is useful in many computations. 

\begin{lemma}\label{lm.right.cov} Let $(\pi,V)$ be an isometric right covariant representation for $(\cA,P,\alpha)$. Then for any $p\in P$ and $a\in \cA$, 
\[\pi(a) V(p) = \pi(a\alpha_p(1)) V(p).\]
If $a$ commutes with $\alpha_p(1)$, we have
\[\pi(a) V(p) = \pi(a\alpha_p(1)) V(p)=V(p) \pi(\alpha_{p^{-1}}(a)),\]
and,
\[V(p)^* \pi(a)  = \pi(\alpha_{p^{-1}}(a))V(p)^*.\]
\end{lemma}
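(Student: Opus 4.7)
The plan is to handle the three equalities in sequence, using Lemma \ref{lm.proj.ineq} and the formula $\alpha_{p^{-1}}(a)=\alpha_p^{-1}(\alpha_p(1)a\alpha_p(1))$ set up earlier.

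For the first identity, I would start from Lemma \ref{lm.proj.ineq}, which gives $V(p)V(p)^*\leq \pi(\alpha_p(1))$. Since both are projections, multiplying on the right by $V(p)$ and using $V(p)^*V(p)=I$ yields $V(p)=\pi(\alpha_p(1))V(p)$. Then
\[\pi(a)V(p)=\pi(a)\pi(\alpha_p(1))V(p)=\pi(a\alpha_p(1))V(p),\]
which is (1) and requires no commutation hypothesis.

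For the second identity, assume $a$ commutes with $\alpha_p(1)$. Then $a\alpha_p(1)=\alpha_p(1)a\alpha_p(1)$, and this element lies in $\alpha_p(\cA)=\alpha_p(1)\cA\alpha_p(1)$ by Proposition \ref{prop.hereditary}. By the very definition of $\alpha_{p^{-1}}$, we get $a\alpha_p(1)=\alpha_p(\alpha_{p^{-1}}(a))$. Applying the right covariance relation with $\alpha_{p^{-1}}(a)$ in place of $a$,
\[\pi(a\alpha_p(1))V(p)=\pi(\alpha_p(\alpha_{p^{-1}}(a)))V(p)=V(p)\pi(\alpha_{p^{-1}}(a)),\]
which combined with (1) gives the chain in (2).

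For the third identity, I would apply (2) to $a^*$, which also commutes with the self-adjoint projection $\alpha_p(1)$. This gives $\pi(a^*)V(p)=V(p)\pi(\alpha_{p^{-1}}(a^*))$; taking adjoints yields
\[V(p)^*\pi(a)=\pi(\alpha_{p^{-1}}(a^*))^*V(p)^*.\]
To finish, I need $\alpha_{p^{-1}}(a^*)^*=\alpha_{p^{-1}}(a)$, which follows because $\alpha_p$ is a $*$-homomorphism (hence so is its inverse on $\alpha_p(\cA)$) and $(\alpha_p(1)a^*\alpha_p(1))^*=\alpha_p(1)a\alpha_p(1)$.

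None of the steps looks genuinely difficult; the only subtlety is recognizing that the hereditary/aligned hypotheses built into the running assumptions are what allow $a\alpha_p(1)$ to be pulled back under $\alpha_p$ in the second step, and that the apparent non-multiplicativity of $\alpha_{p^{-1}}$ noted in Section \ref{sec.dym} is sidestepped here because we are only inverting $\alpha_p$ on a single element lying in $\alpha_p(\cA)$ rather than on a product.
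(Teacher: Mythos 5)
Your proposal is correct and follows essentially the same route as the paper: establish $V(p)=\pi(\alpha_p(1))V(p)$, observe that $a\alpha_p(1)=\alpha_p(1)a\alpha_p(1)=\alpha_p(\alpha_{p^{-1}}(a))$ under the commutation hypothesis, apply right covariance, and obtain the last identity by substituting $a^*$ and taking adjoints. The only cosmetic difference is that you derive $V(p)=\pi(\alpha_p(1))V(p)$ from Lemma \ref{lm.proj.ineq}, whereas the paper gets it directly from the covariance relation with $a=1$; these are the same observation.
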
 

\begin{proof}
Since $\pi$ is unital, $V(p)=V(p)\pi(1)=\pi(\alpha_p(1))V(p)$. Multiplying $\pi(a)$ on the left proves the first equality. If in addition, $a$ commutes with $\alpha_p(1)$, then $a\alpha_p(1)=\alpha_p(1)a\alpha_p(1)\in\alpha_p(\cA)$. In this case, 
\[\pi(a) V(p) = \pi(a\alpha_p(1))V(p)=\pi(\alpha_p(\alpha_{p^{-1}}(a)))V(p)=V(p)\pi(\alpha_{p^{-1}}(a)).\]
Moreove, $a^*$ also commutes with $\alpha_p(1)$. Putting $a^*$ in the above equality and taking the adjoint on both sides give the last equality. 
\end{proof} 

\section{Dilation and C*-envelope}\label{sec.diln}

Let $\cB$ be an operator algebra,  not necessarily self-adjoint. A $C^*$-cover of $\cB$ is a pair $(C,j)$ where $C$ is a $C^*$-algebra and $j$ is a completely isometric homomorphism $j:\cB\to C$ such that $C=C^*(j(B))$. The $C^*$-envelope of $\cB$, denoted by $(C^*_{env}(\cB),i)$, is the smallest $C^*$-cover in the sense that for any $C^*$-cover $(C,j)$, there exists a $\ast$-homomorphism $\Phi:C\to C^*_{env}(\cB)$ such that $i=\Phi\circ j$.  
 
There are many ways to compute the $C^*$-envelope of an operator algebra. This paper focuses on the one using dilation theory. For a completely contractive homomorphism $\phi:\cB\to\bh{H}$, a dilation $\rho:\cB\to\bh{K}$ is a completely contractive homomorphism on a larger Hilbert space $\cK\supset\cH$ such that for all $a\in \cB$, 
\[\phi(a)=P_\cH \rho(a)\bigg|_\cH.\]
A result of Sarason \cite{Sarason1966} shows that $\cK$ can be decomposed as $\cK=\cH^+\oplus\cH\oplus \cH^-$, with respect to which $\rho(a)$ is represented by the following operator matrix:
\[\rho(a)=\begin{bmatrix} * & 0 & 0 \\ * & \phi(a) & 0 \\ * & * & *\end{bmatrix}\] 
A homomorphism $\phi$ is called maximal if any dilation $\rho$ can be written as $\phi\oplus \psi$ for some representation $\psi$. It turns out that the $C^*_{env}(\cB)$ is the $C^*$-algebra generated by a maximal completely isometric homomorphism of $\cB$. Moreover, a result of Dritschel and McCullough \cite{DM2005} shows that any completely contractive homomorphism $\phi$ of $\cB$ can be dilated to a maximal representation, and thus the $C^*_{env}(\cB)$ always exists. 

We start by pointing out that isometric covariant representations are maximal isometric right covariant representations of $\cA\rtimes_\alpha^{iso} P$. 

\begin{proposition}\label{prop.maximal} Let $(\pi,V)$ be an isometric covariant representation for $(\cA,P,\alpha)$. Then $(\pi,V)$ is also an isometric right covariant representation, and $(\pi,V)$ is maximal for $\cA\rtimes_{\alpha}^{iso} P$. 
\end{proposition}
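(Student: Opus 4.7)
The first claim is immediate from Proposition \ref{prop.condition}. For maximality, let $(\rho, W)$ on $\cK \supset \cH$ be an arbitrary isometric right covariant dilation of $(\pi, V)$; I plan to show $(\rho, W)$ splits as a direct sum $(\pi, V) \oplus (\psi, U)$ for some isometric right covariant representation $(\psi, U)$ on $\cH^\perp$.

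Step one: establish that $\cH$ reduces $\rho|_\cA$. Evaluating the dilation identity on $a \otimes \delta_e$ yields $\pi(a) = P_\cH \rho(a)|_\cH$. Since $\pi$ is a $\ast$-homomorphism, expanding $\pi(aa^*) = \pi(a)\pi(a^*)$ gives $P_\cH \rho(a)(I - P_\cH)\rho(a^*)|_\cH = 0$, which has the form $T^*T = 0$ for $T = (I - P_\cH)\rho(a^*)|_\cH$, forcing $T = 0$. Consequently $\cH$ is invariant under every $\rho(a^*)$, hence reducing, and we may write $\rho = \pi \oplus \psi$ with respect to $\cK = \cH \oplus \cH^\perp$.

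Step two: analyze $W$ in the resulting block form. Setting $a = 1$ in the dilation identity gives $V(p) = P_\cH W(p)|_\cH$, so
\[W(p) = \begin{pmatrix} V(p) & X_p \\ Y_p & Z_p \end{pmatrix}.\]
Isometricity $W(p)^*W(p) = I$ together with $V(p)^*V(p) = I_\cH$ yields $Y_p = 0$ and $V(p)^*X_p = 0$; the latter places $\operatorname{ran} X_p$ inside $\ker V(p)^* = (I - V(p)V(p)^*)\cH$. Since $(\pi, V)$ is isometric \emph{covariant}, Proposition \ref{prop.condition} gives $V(p)V(p)^* = \pi(\alpha_p(1))$, so $\pi(\alpha_p(1))X_p = 0$.

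Step three closes the argument. The right covariance of $(\rho, W)$ applied at the unit of $\cA$ gives $\rho(\alpha_p(1))W(p) = W(p)\rho(1) = W(p)$; reading off the $(1,2)$ block of this equation yields $\pi(\alpha_p(1))X_p = X_p$. Combined with step two, $X_p = 0$, so $W(p) = V(p) \oplus U(p)$ for $U(p) := Z_p$, and $(\psi, U)$ inherits the isometric right covariant structure by restriction. The main subtlety is the interplay between the two covariance relations at $a = 1$: the equality $V(p)V(p)^* = \pi(\alpha_p(1))$ afforded by isometric covariance, strictly stronger than the inequality in Lemma \ref{lm.proj.ineq} satisfied by generic right covariant pairs, is exactly what forces the off-diagonal block $X_p$ to vanish.
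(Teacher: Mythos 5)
Your proof is correct and follows essentially the same route as the paper: block-decompose the dilation, observe that $\rho$ must be block diagonal and $W(p)$ upper triangular, and then kill the off-diagonal block using the equality $V(p)V(p)^*=\pi(\alpha_p(1))$ afforded by isometric covariance. The only cosmetic difference is the last step, where the paper reads off the $(1,1)$ block of the positivity statement $W(p)W(p)^*\leq\rho(\alpha_p(1))$ (Lemma \ref{lm.proj.ineq}) to get $A_pA_p^*=0$, while you combine $V(p)^*X_p=0$ with $\rho(\alpha_p(1))W(p)=W(p)$; these are equivalent, since that lemma is itself proved from the same identity at the unit.
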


\begin{proof} Since $(\pi,V)$ is isometric covariant, Proposition \ref{prop.condition} proves that it is also an isometric right covariant representation. 

To see it is maximal: let $(\rho,W)$ be a dilation of $(\pi,V)$ on a larger Hilbert space $\cK\supset \cH$. First of all, since $V(p)$ is isometric on $\cH$, $\cH$ must be invariant for $W$. Moreover, $\pi$ is a $\ast$-homomorphism, so that its dilation $\rho$ must be in the form of $\rho=\pi\oplus\phi$ for some unital $\ast$-homomorphism $\phi:\cA\to\cB(\cH^\perp)$ (for example, one can see this from Sarason's decomposition: $\rho(a^*)=\rho(a)^*$ so that the off-diagonal entries must be $0$). Therefore, with respect to $\cK=\cH\oplus \cH^\perp$, for each $p\in P$, 
\[W(p)=\begin{bmatrix} V(p) & A_p \\ 0 & B(p) \end{bmatrix},\]
and for each $a\in \cA$, 
\[\rho(a)=\begin{bmatrix} \pi(a) & 0 \\ 0 & \phi(a) \end{bmatrix}.\]
It suffices to prove that $A_p=0$ for all $p\in P$ so that $(\rho,W)=(\pi,V)\oplus(\phi,B)$. Since $(\rho,W)$ is an isometric right covariant representation, Lemma \ref{lm.proj.ineq} implies that $W(p)W(p)^*\leq \rho(\alpha_p(1))$. We have,
\begin{align*}
W(p)W(p)^* &= \begin{bmatrix} V(p)V(p)^* + A_pA_p^* & * \\ * & * \end{bmatrix} \\
&\leq \rho(\alpha_p(1)) \\
&= \begin{bmatrix} \pi(\alpha_p(1)) & 0 \\ 0 & * \end{bmatrix}.
\end{align*}

Therefore, $V(p)V(p)^* + A_pA_p^*\leq \pi(\alpha_p(1))$. However, $V(p)V(p)^*=\pi(\alpha_p(1))$ and $A_pA_p^*\geq 0$. This implies that $A_pA_p^*=0$ and thus $A_p=0$ for all $p\in P$. 
\end{proof}

\section{Main Result}\label{sec.main}

The goal of this section is to prove the following main result:

\begin{theorem}\label{thm.envelope} Let $(\cA,P,\alpha)$ be an injective semigroup dynamical system over an abelian semigroup $P$, with its projections aligned and each $\alpha_p(\cA)$ being hereditary. Then the $C^*$-envelope of $\cA\rtimes_\alpha^{iso} P$ is $\cA\rtimes_\alpha^* P$.
\end{theorem}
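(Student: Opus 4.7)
The plan is to apply the Dritschel--McCullough characterization of the $C^*$-envelope: $C^*_{env}(\cB)$ is generated by the image of any maximal completely isometric homomorphism of $\cB$, and every completely contractive homomorphism dilates to a maximal one. Combined, this reduces the theorem to the single statement that the maximal representations of $\cA\rtimes_\alpha^{iso} P$ are precisely the isometric covariant representations of $(\cA,P,\alpha)$. Once this is established, the universal isometric covariant representation yields a completely isometric maximal representation whose generated $C^*$-algebra is simultaneously $\cA\rtimes_\alpha^* P$ (by the definition of the universal object) and $C^*_{env}(\cA\rtimes_\alpha^{iso} P)$ (by Dritschel--McCullough).

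One direction is already Proposition \ref{prop.maximal}: isometric covariant representations are maximal. For the other direction, I need what the paper labels Proposition \ref{prop.dilation.base}: any isometric right covariant representation $(\pi,V)$ that is \emph{not} isometric covariant admits a non-trivial dilation. By Proposition \ref{prop.condition} and Lemma \ref{lm.proj.ineq}, failure of isometric covariance is measured precisely by the defect projection $D_p := \pi(\alpha_p(1)) - V(p)V(p)^* \neq 0$ for some $p\in P$. The goal of the dilation step is to produce an isometric right covariant $(\rho,W)$ on a strictly larger space $\cK\supsetneq \cH$ compressing to $(\pi,V)$, in which at least one defect $D_p$ has shrunk. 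Iterating (transfinitely if necessary) via Dritschel--McCullough then produces a maximal dilation in which every defect vanishes, hence an isometric covariant representation.

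The main obstacle is the construction of this non-trivial dilation. A natural approach is a Fock-type construction: enlarge $\cH$ by summands indexed by $P$ (or finite words in $P$), with the $q$-th summand modeled on $\pi(\alpha_q(1))\cH$, and define $W(p)$ to act as a shift that carries the defect $D_p\cH$ into a fresh copy of itself in the new coordinates. The accompanying $*$-homomorphism $\rho$ must extend $\pi$ to $\cK$ while respecting $W(p)\rho(a) = \rho(\alpha_p(a))W(p)$ on every summand. This is where the standing hypotheses are essential: the assumption that $P$ is abelian makes the various shifts $W(p)$ and $W(q)$ commute in a compatible way, the aligned projections condition guarantees the defect projections $D_p$ lie in an abelian family, and Lemma \ref{lm.compare} provides the comparison $\alpha_q(1) \leq \alpha_{p^{-1}}(\alpha_q(1))$ needed to verify that the shift on the $q$-summand lands inside the range projection available on the $pq$-summand. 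One must then check that $W$ is isometric, that $(\rho,W)$ is genuinely isometric right covariant, and that the compression to $\cH$ recovers $(\pi,V)$.

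After Proposition \ref{prop.dilation.base} is established, the proof of Theorem \ref{thm.envelope} is essentially a synthesis: a maximal isometric right covariant representation must satisfy $D_p = 0$ for all $p\in P$ by contrapositive, hence is isometric covariant by Proposition \ref{prop.condition}; Proposition \ref{prop.maximal} gives the converse. Thus the maximal representations of $\cA\rtimes_\alpha^{iso} P$ coincide with isometric covariant representations of $(\cA,P,\alpha)$, and the universal such representation realizes both $\cA\rtimes_\alpha^* P$ and $C^*_{env}(\cA\rtimes_\alpha^{iso} P)$.
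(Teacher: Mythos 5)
Your overall architecture is the paper's: invoke Dritschel--McCullough, use Proposition \ref{prop.maximal} for one direction, and reduce the other direction to producing a non-trivial dilation of any isometric right covariant representation that is not isometric covariant. Your identification of the defect projection $D_q=\pi(\alpha_q(1))-V(q)V(q)^*$ via Proposition \ref{prop.condition} and Lemma \ref{lm.proj.ineq} is also exactly right. The gap is that the one step carrying all the weight --- the construction of the non-trivial dilation, i.e.\ Proposition \ref{prop.dilation.base} --- is only gestured at. You propose a Fock-type enlargement with summands indexed by $P$ and a shift carrying the defect into fresh copies, but you never specify how the extended $*$-homomorphism $\rho$ acts on the new summands, and without that nothing can be verified. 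The paper's construction is much leaner and quite specific: it adjoins the single summand $\cL_1=D_q\cH$ with inclusion $T:\cL_1\to\cH$, and sets
\[\pi_1(a)=\begin{bmatrix}\pi(a)&0\\0&T^*\pi(\alpha_q(a))T\end{bmatrix},\qquad V_1(p)=\begin{bmatrix}V(p)&V(q)^*V(p)T\\0&T^*V(p)T\end{bmatrix}.\]
The non-obvious choices here are that the action on the defect space is twisted by $\alpha_q$, and that the corner entry of $V_1(p)$ is $V(q)^*V(p)T$; these are precisely what make the verifications close up. For instance, multiplicativity of $a\mapsto T^*\pi(\alpha_q(a))T$ uses $TT^*=\pi(\alpha_q(1))-V(q)V(q)^*$ together with $T^*V(q)=0$ (Lemma \ref{lm.orthogonal}), and isometry of $V_1(p)$ comes down to $T^*\pi(\alpha_{p^{-1}}(\alpha_q(1)))T=I_{\cL_1}$, which is exactly where Lemma \ref{lm.compare} and Lemma \ref{lm.PT} (hence the aligned-projections and hereditary hypotheses) enter. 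None of this is visible in your sketch, and it is not clear that a $P$-indexed Fock construction would even yield an isometric $W(p)$ on the new summands rather than a partial isometry.

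Two smaller remarks. First, for the theorem itself you do not need to iterate to an isometric covariant dilation at all: one non-trivial dilation suffices to show non-maximality, and then Corollary \ref{cor.maximal} plus Dritschel--McCullough finishes the argument (the paper's iteration in Proposition \ref{prop.dilation1} is supplementary, and is done one defect at a time with an inductive limit, using Lemma \ref{lm.preservation} to ensure previously closed defects stay closed --- no transfinite process is needed for countable $P$). Second, your closing synthesis (maximal $\Rightarrow$ all defects vanish $\Rightarrow$ isometric covariant, and conversely) is correct and matches the paper. So the skeleton is sound; what is missing is the actual dilation, which is the mathematical content of the proof.
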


Proposition \ref{prop.maximal} proves that isometric covariant representations are maximal representations for the non-self-adjoint operator algebra $\cA\rtimes_\alpha^{iso} P$. It suffices to prove the converse that an isometric right covariant representation is not maximal if it is not isometric covariant. This is done through an explicit construction of a non-trivial dilation of an isometric right covariant representation that is not isometric covariant, using a dilation technique from \cite{Jaspar2016}. Consequently, we can invoke the result of Dritschel and McCullough \cite{DM2005} to show that the universal $C^*$-algebra with respect to isometric covariant representations is the $C^*$-envelope of $\cA\rtimes_\alpha^{iso} P$. 

Let $(\pi,V)$ be an isometric right covariant representation of $(\cA,P,\alpha)$. By Proposition \ref{prop.condition}, it is an isometric covariant representation if and only if $V(p)V(p)^*=\pi(\alpha_p(1))$ for all $p\in P$. Suppose $(\pi,V)$ is not isometric covariant, there must be a $q\in P$ with $V(q)V(q)^*\neq \pi(\alpha_q(1))$. By Lemma \ref{lm.proj.ineq}, the range projection $V(q)V(q)^*$ is strictly less than the projection $\pi(\alpha_q(1))$. Let $\cL_1=(\pi(\alpha_q(1))-V(q)V(q)^*)\cH$ be the subspace corresponding to their difference. Define $T:\cL_1\to\cH$ to be the inclusion map of $\cL_1$ inside $\cH$. In other words, $T$ is essentially the projection $\pi(\alpha_q(1))-V(q)V(q)^*$ with its domain restricted to $\cL_1$. We have $T^*T=I_{\cL_1}\in\cB(\cL_1)$ and $TT^*=\pi(\alpha_q(1))-V(q)V(q)^* \in \bh{H}$.

\begin{lemma}\label{lm.orthogonal} $V(q)^*T=T^*V(q)=0$
\end{lemma}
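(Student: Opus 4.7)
The plan is to show directly that the range of $T$ is orthogonal to the range of $V(q)$; since $V(q)V(q)^*$ is the orthogonal projection onto the range of the isometry $V(q)$, this immediately gives $V(q)^* T = 0$, and then $T^*V(q) = 0$ follows by taking adjoints.

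First, I would invoke Lemma \ref{lm.proj.ineq} to note that $V(q)V(q)^* \leq \pi(\alpha_q(1))$, which guarantees that the difference $E := \pi(\alpha_q(1)) - V(q)V(q)^*$ is itself an orthogonal projection. In particular $\cL_1 = E\cH$ is a closed subspace, and for any $v \in \cL_1$ we have $Tv = v$. The key identity to establish is
\[
E \cdot V(q)V(q)^* = 0,
\]
which follows at once from $V(q)V(q)^* \leq \pi(\alpha_q(1))$: indeed $\pi(\alpha_q(1)) V(q)V(q)^* = V(q)V(q)^*$, so $E\,V(q)V(q)^* = V(q)V(q)^* - V(q)V(q)^* = 0$. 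This says the range of $E$ is orthogonal to the range of $V(q)$.

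From here the conclusion is immediate: for any $v \in \cL_1$, $V(q)^*Tv = V(q)^* v = V(q)^*(V(q)V(q)^*v) + V(q)^*(Ev)\cdot(\text{residual}) = 0$, since $v = Ev$ lies in the orthogonal complement of $V(q)\cH$ and $V(q)^*$ annihilates that complement (being the adjoint of the isometry $V(q)$). Thus $V(q)^*T = 0$, and taking adjoints gives $T^*V(q) = 0$.

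I do not anticipate any real obstacle here; this is a structural orthogonality statement that is built into the very definition of $\cL_1$ as the ``defect'' between the projection $\pi(\alpha_q(1))$ and the range projection $V(q)V(q)^*$. The only thing one has to be mildly careful about is remembering to use $V(q)V(q)^* \leq \pi(\alpha_q(1))$ (Lemma \ref{lm.proj.ineq}) to justify that $E$ is a projection, rather than merely a self-adjoint positive operator, so that the decomposition into mutually orthogonal range subspaces is meaningful.
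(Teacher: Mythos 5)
Your proof is correct and follows essentially the same route as the paper: both arguments rest on the observation that $TT^* = \pi(\alpha_q(1)) - V(q)V(q)^*$ is a projection whose range is orthogonal to the range of $V(q)$ (equivalently, $TT^*\,V(q)V(q)^* = 0$, which you derive from Lemma \ref{lm.proj.ineq}), from which $V(q)^*T = 0$ and its adjoint follow at once. The only cosmetic issue is the garbled ``residual'' term in your final display; the surrounding reasoning makes clear what you mean, and the argument is sound.
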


\begin{proof} By construction, the range of $T$ is $\pi(\alpha_q(1))\cH\ominus V(q)\cH$, which is orthogonal to the range of $V(q)$. Therefore, $TT^* V(q)V(q)^*=0$, and the result follows immediately. 
\end{proof}

\begin{lemma}\label{lm.PT} Let $P\in\bh{H}$ be any orthogonal projection so that $\pi(\alpha_q(1))\leq P$. Then $PT=T$. 
\end{lemma}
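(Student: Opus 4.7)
The plan is to use the fact that the range of $T$ sits inside $\pi(\alpha_q(1))\cH$, combined with the projection comparison hypothesis $\pi(\alpha_q(1))\leq P$, to show that $P$ acts trivially on everything $T$ produces.

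First I would unpack the construction of $T$. By definition, $\cL_1 = (\pi(\alpha_q(1)) - V(q)V(q)^*)\cH$, and since $V(q)V(q)^* \leq \pi(\alpha_q(1))$ by Lemma \ref{lm.proj.ineq}, the operator $\pi(\alpha_q(1)) - V(q)V(q)^*$ is an orthogonal projection whose range is exactly $\cL_1$. In particular, $\cL_1 \subseteq \pi(\alpha_q(1))\cH$. Since $T$ is the inclusion of $\cL_1$ into $\cH$, we have $\operatorname{ran}(T) \subseteq \pi(\alpha_q(1))\cH$, or equivalently $\pi(\alpha_q(1))T = T$.

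Next I would use the projection inequality. Because $P$ and $\pi(\alpha_q(1))$ are orthogonal projections with $\pi(\alpha_q(1))\leq P$, we have $P\pi(\alpha_q(1)) = \pi(\alpha_q(1))$. Combining this with the previous step gives
\[
PT = P\,\pi(\alpha_q(1))\,T = \pi(\alpha_q(1))\,T = T,
\]
which is the desired equality.

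This lemma is essentially a routine consequence of how $T$ is built, so I do not expect any real obstacle; the only thing to be careful about is making the two observations (range of $T$ lies in $\pi(\alpha_q(1))\cH$, and $P$ dominates $\pi(\alpha_q(1))$) explicit before composing them. The lemma will most likely be applied later with $P = V(p)V(p)^*$ for suitable $p$, or with a projection of the form $\pi(\alpha_{p^{-1}}(\alpha_q(1)))$ from Lemma \ref{lm.compare}, where the hypothesis $\pi(\alpha_q(1))\leq P$ is exactly what those earlier results supply.
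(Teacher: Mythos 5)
Your proof is correct and follows essentially the same route as the paper: both arguments reduce to the observation that the range of $T$ is contained in $\pi(\alpha_q(1))\cH$, hence in the range of $P$, so that $PT = P\pi(\alpha_q(1))T = \pi(\alpha_q(1))T = T$. The paper phrases this slightly more compactly via $TT^* \leq P$, but the content is identical, and your closing remark about where the lemma gets applied (with $P = \pi(\alpha_{p^{-1}}(\alpha_q(1)))$ supplied by Lemma \ref{lm.compare}) is also accurate.
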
 

\begin{proof} From the construction, $TT^*$ is the range projection onto $\pi(\alpha_q(1))\cH\ominus V(q)\cH$, which is contained in the range of $P$. Therefore,  $TT^*\leq P$ and thus $PT=T$.
\end{proof} 

Now, define maps $\pi_1$ and $V_1$ on $\cH_1=\cH\oplus \cL_1$ by 

\begin{align*}
\pi_1(a) &= \begin{bmatrix} \pi(a) & 0 \\ 0 & T^* \pi(\alpha_q(a)) T \end{bmatrix}, \\
V_1(p) &= \begin{bmatrix} V(p) & V(q)^* V(p) T \\ 0 & T^* V(p) T \end{bmatrix}.
\end{align*}

\begin{proposition}\label{prop.dilation.base} The pair $(\pi_1,V_1)$ is also an isometric right covariant representation.
\end{proposition}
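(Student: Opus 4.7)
The plan is a direct verification of the three defining conditions of an isometric right covariant representation for $(\pi_1,V_1)$: that $\pi_1$ is a unital $*$-homomorphism, that $V_1$ is an isometric representation with $V_1(e)=I$, and that $V_1(p)\pi_1(a)=\pi_1(\alpha_p(a))V_1(p)$ for all $p\in P$, $a\in\cA$. Thanks to the block-diagonal shape of $\pi_1$ and block-upper-triangular shape of $V_1$, each $(1,1)$ entry will reduce to the hypothesis on $(\pi,V)$, so the work is confined to the $(2,2)$ and $(1,2)$ entries. Every computation will rest on four tools: $TT^*=\pi(\alpha_q(1))-V(q)V(q)^*$, the vanishing $V(q)^*T=0=T^*V(q)$ from Lemma \ref{lm.orthogonal}, the collapse $PT=T$ whenever $P\geq\pi(\alpha_q(1))$ from Lemma \ref{lm.PT}, and the intertwining formulas of Lemma \ref{lm.right.cov}.

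First I would show that $\pi_1$ is a unital $*$-homomorphism. The second diagonal block $a\mapsto T^*\pi(\alpha_q(a))T$ is visibly $*$-preserving; unitality follows from $T^*\pi(\alpha_q(1))T=T^*T=I_{\cL_1}$. For multiplicativity, I would split the defect
\[T^*\pi(\alpha_q(a))(I-TT^*)\pi(\alpha_q(b))T\]
under $I-TT^*=(I-\pi(\alpha_q(1)))+V(q)V(q)^*$: the first summand vanishes because $\pi(\alpha_q(a))\pi(\alpha_q(1))=\pi(\alpha_q(a))$, and the second by rewriting $\pi(\alpha_q(a))V(q)=V(q)\pi(a)$ (right covariance) and invoking $T^*V(q)=0$.

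Next I would check that $V_1$ is an isometric semigroup representation. The identity $V_1(e)=I$ is immediate from $V(e)=I$ and $V(q)^*T=0$. For $V_1(p)^*V_1(p)=I$, the cross-entry $V(p)^*V(q)^*V(p)T$ collapses to $V(q)^*V(p)^*V(p)T=V(q)^*T=0$ after swapping $V(p)^*$ past $V(q)^*$ via $V(p)V(q)=V(q)V(p)$; the $(2,2)$ entry simplifies through $V(q)V(q)^*+TT^*=\pi(\alpha_q(1))$ to $T^*V(p)^*\pi(\alpha_q(1))V(p)T=T^*\pi(\alpha_{p^{-1}}(\alpha_q(1)))T$ (Lemma \ref{lm.right.cov} applied to the adjoint), which equals $T^*T=I_{\cL_1}$ by Lemmas \ref{lm.compare} and \ref{lm.PT}. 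For the semigroup law $V_1(p)V_1(r)=V_1(pr)$, the key observation is that for any $s\in P$ the range of $V(s)T$ sits inside $\pi(\alpha_{sq}(1))\cH\subseteq\pi(\alpha_q(1))\cH$ (because $T$ lands in $\pi(\alpha_q(1))\cH$ and $V(s)\pi(\alpha_q(1))=\pi(\alpha_{sq}(1))V(s)$); applied with $s=r$ and $s=pr$, together with $T^*V(q)=0$, this absorbs every $TT^*$ appearing in the middle of the matrix product and matches the target on the nose.

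Finally I would verify right covariance $V_1(p)\pi_1(a)=\pi_1(\alpha_p(a))V_1(p)$ entry by entry. In both the $(2,2)$ and $(1,2)$ slots the routine is identical: expand $TT^*=\pi(\alpha_q(1))-V(q)V(q)^*$, eliminate the $V(q)V(q)^*$ piece through $V(q)V(q)^*\pi(\alpha_q(a))T=V(q)\pi(a)V(q)^*T=0$ (and symmetrically using $T^*V(q)=0$), and strip the $\pi(\alpha_q(1))$ piece using $\pi(\alpha_q(a))\pi(\alpha_q(1))=\pi(\alpha_q(a))$. Both sides then collapse to $T^*\pi(\alpha_{pq}(a))V(p)T$ in the $(2,2)$ slot and $\pi(\alpha_p(a))V(q)^*V(p)T$ in the $(1,2)$ slot, and are matched by right covariance $V(p)\pi(a)=\pi(\alpha_p(a))V(p)$ together with its adjoint identity $V(q)^*\pi(\alpha_{pq}(a))=\pi(\alpha_p(a))V(q)^*$. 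The main obstacle is not conceptual but purely bookkeeping: tracking exactly when a projection acts trivially against $T$, and deploying $V(q)^*T=0$ at precisely the right moment to kill the $V(q)V(q)^*$ terms produced whenever $TT^*$ is expanded.
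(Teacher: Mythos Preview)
Your proposal is correct and follows essentially the same three-claim direct verification as the paper, using the same tools ($TT^*=\pi(\alpha_q(1))-V(q)V(q)^*$, Lemmas \ref{lm.orthogonal}, \ref{lm.PT}, \ref{lm.compare}, \ref{lm.right.cov}) in the same places. Your observation that the range of $V(s)T$ lies in $\pi(\alpha_q(1))\cH$ is a slightly cleaner packaging of the paper's step $\pi(\alpha_q(1))V(r)T=V(r)\pi(\alpha_{r^{-1}}(\alpha_q(1)))T=V(r)T$, but the arguments are otherwise identical.
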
 

\begin{proof} We shall divide the proof into several claims:

\textbf{Claim 1}. $\pi_1$ is a unital $\ast$-homomorphism.

\textit{Proof of Claim 1}. $\pi_1=\pi\oplus\phi_1$ where $\phi_1(a)=T^*\pi(\alpha_q(a)) T:\cA\to\cB(\cL_1)$. It suffices to prove that $\phi_1$ is a unital $\ast$-homomorphism. First, $\phi_1(1)=T^* \pi(\alpha_q(1)) T$. By Lemma \ref{lm.PT}, $\pi(\alpha_q(1)) T=T$ and thus $\phi_1(1)=T^*T=I_{\cL_1}$. Now, for any $a,b\in \cA$, 
\begin{align*}
\phi_1(a)\phi_1(b) &= T^* \pi(\alpha_q(a)) TT^* \pi(\alpha_q(b)) T \\
&= T^* \pi(\alpha_q(a)) (\pi(\alpha_q(1))-V(q)V(q)^*) \pi(\alpha_q(b)) T \\
&= T^*  \pi(\alpha_q(ab)) T - T^* \pi(\alpha_q(a))V(q)V(q)^*\pi(\alpha_q(b)) T \\
&= \phi_1(ab) - T^* V(q) \pi(ab) V(q)^* T  \\
&= \phi_1(ab).
\end{align*}
Here, the last equality follows from Lemma \ref{lm.orthogonal} where $T^* V(q)=0$. $\phi_1$ is clearly $\ast$-linear. Therefore, it is a unital $\ast$-homomorphism, and so is $\pi_1$. 

\textbf{Claim 2}. $V_1$ is an isometric representation of $P$, and $V_1(e)=I$.

\textit{Proof of Claim 2}. By the definition, 
\[V_1(e)=\begin{bmatrix} V(e) & V(q)^* T \\ 0 & T^* T\end{bmatrix}=\begin{bmatrix} I_\cH & 0 \\ 0 & I_{\cL_1} \end{bmatrix}=I_{\cH\oplus \cL_1}\]
Here, $V(q)^* T=0$ follows from Lemma \ref{lm.orthogonal}.

We first prove that $V_1(p)$ is an isometry for all $p\in P$. Consider $V_1(p)^* V_1(p)$:
\begin{align*}
V_1(p)^* V_1(p) &= \begin{bmatrix} V(p)^* & 0 \\ T^* V(p)^* V(q) & T^* V(p)^* T\end{bmatrix} \begin{bmatrix} V(p) & V(q)^* V(p)T \\ 0 & T^*V(p)T \end{bmatrix} \\
&= \begin{bmatrix} V(p)^*V(p) & V(p)^*V(q)^*V(p)T \\
T^* V(p)^* V(q) V(p) & T^*V(p)^* V(q) V(q)^* V(p) T + T^* V(p)^* TT^* V(p) T \end{bmatrix}.
\end{align*}

Since $V(p)$ is an isometry, $V(p)^*V(p)=I$. Since $P$ is abelian, $V(p),V(q)$ commute, and together with Lemma \ref{lm.orthogonal},
\[T^* V(p)^* V(q) V(p)=T^* V(p)^* V(p) V(q) =T^* V(q)=0.\]
Similarly, $V(p)^*V(q)^*V(p)T=0$ as well. Lastly, we use $TT^*=\pi(\alpha_q(1))-V(q)V(q)^*$, and thus 
\begin{align*}
&T^*V(p)^* V(q) V(q)^* V(p) T + T^* V(p)^* TT^* V(p) T  \\
=& T^*V(p)^* V(q) V(q)^* V(p) T + T^* V(p)^* (\pi(\alpha_q(1))-V(q)V(q)^*) V(p) T \\
=& T^* V(p)^* \pi(\alpha_q(1)) V(p) T \\
=& T^* V(p)^* V(p) \pi(\alpha_{p^{-1}}(\alpha_q(1)))  T \\
=& T^* \pi(\alpha_{p^{-1}}(\alpha_q(1)))  T.
\end{align*}

By Lemma \ref{lm.compare}, $\pi(\alpha_q(1))\leq \pi(\alpha_{p^{-1}}(\alpha_q(1)))$, which, by Lemma \ref{lm.PT}, implies that $T^*\pi(\alpha_{p^{-1}}(\alpha_q(1)))  T=T^*T=I_{\cL_1}$.

Now it suffices to prove $V_1(p)V_1(r)=V_1(pr)$ for all $p,r\in P$. We have,
\begin{align*}
V_1(p)V_1(r) &= \begin{bmatrix} V(p) & V(q)^*V(p)T \\ 0& T^* V(p)T\end{bmatrix} \begin{bmatrix} V(r) & V(q)^*V(r)T \\ 0& T^* V(r)T\end{bmatrix} \\
&= \begin{bmatrix} V(pr) & V(p)V(q)^* V(r)T+V(q)^*V(p)TT^* V(r) T \\
0 & T^* V(p) TT^* V(r) T \end{bmatrix}. 
\end{align*}
Here,
\begin{align*}
 & V(p)V(q)^* V(r)T+V(q)^*V(p)TT^* V(r) T \\
=& V(q)^* V(q) V(p) V(q)^* V(r) T + V(q)^* V(p) (\pi(\alpha_q(1))-V(q)V(q)^*) V(r) T \\
=&  V(q)^*  V(p) V(q)V(q)^* V(r) T+ V(q)^* V(p)\pi(\alpha_q(1))V(r) T -V(q)^* V(p)V(q)V(q)^* V(r) T \\
=& V(q)^* V(p)\pi(\alpha_q(1))V(r) T \\
=& V(q)^* V(pr) \pi(\alpha_{r^{-1}}(\alpha_q(1))) T \\
=&  V(q)^* V(pr) T.
\end{align*}
Here, we applied Lemma \ref{lm.compare} and Lemma \ref{lm.PT} to show that $ \pi(\alpha_{r^{-1}}(\alpha_q(1)))\geq \pi(\alpha_q(1))$ and $\pi(\alpha_{r^{-1}}(\alpha_q(1))) T=T$.

Finally,
\begin{align*}
 & T^* V(p) TT^* V(r) T \\
=&T^* V(p) \pi(\alpha_q(1)) V(r) T - T^* V(p) V(q)V(q)^* V(r) T \\
=& T^* V(pr) \pi(\alpha_{r^{-1}}(\alpha_q(1))) T - (T^* V(q)) V(p)V(q)^*V(r)T \\
=& T^* V(pr)T.
\end{align*}

Therefore,
\[V_1(p)V_1(r) = \begin{bmatrix} V(pr) & V(q)^* V(pr) T \\ 0 & T^* V(pr) T^*\end{bmatrix} = V_1(pr).\]

\textbf{Claim 3}. For any $p\in P$ and $a\in \cA$, $V_1(p)\pi_1(a)=\pi_1(\alpha_p(a)) V_1(p)$. 

\textit{Proof of Claim 3}. From the definition of $V_1,\pi_1$, the left hand side becomes:
\[
V_1(p)\pi_1(a) = \begin{bmatrix} V(p)\pi(a) & V(q)^* V(p) TT^* \pi(\alpha_q(a)) T \\
0 & T^* V(p) TT^* \pi(\alpha_q(a)) T 
\end{bmatrix}.\]
The right hand side becomes:
\[\pi_1(\alpha_p(a))V_1(p) =\begin{bmatrix}
\pi(\alpha_p(a))V(p) & \pi(\alpha_p(a)) V(q)^* V(p) T \\
0 & T^* \pi(\alpha_{qp}(a)) TT^* V(p) T
\end{bmatrix}.
\]
First of all, the covariance condition implies $ V(p)\pi(a) =\pi(\alpha_p(a))V(p)$. Next,
\begin{align*}
&V(q)^* V(p) TT^* \pi(\alpha_q(a)) T\\
=& V(q)^* V(p) (\pi(\alpha_q(1))-V(q)V(q)^*)\pi(\alpha_q(a)) T \\
=& V(q)^* V(p) \pi(\alpha_q(a)) T -  V(q)^* V(p)V(q)V(q)^*\pi(\alpha_q(a)) T \\
=& V(q)^* \pi(\alpha_{pq}(a)) V(p) T - V(p) \pi(a) V(q)^* T \\
=&  \pi(\alpha_{p}(a)) V(q)^* V(p) T.
\end{align*}
Finally,
\begin{align*}
 & T^* V(p) TT^* \pi(\alpha_q(a)) T \\
=& T^* V(p) \pi(\alpha_q(a)) T -  T^* V(p) V(q)V(q)^* \pi(\alpha_q(a)) T \\
=& T^* \pi(\alpha_{pq}(a)) V(p) T - T^*V(q) V(p)V(q)^* \pi(\alpha_q(a)) T \\
=& T^* \pi(\alpha_{pq}(a)) V(p) T.
\end{align*}
On the other hand,
\begin{align*}
 & T^* \pi(\alpha_{qp}(a)) TT^* V(p) T \\
=& T^* \pi(\alpha_{qp}(a)) \pi(\alpha_q(1)) V(p) T -  T^* \pi(\alpha_{qp}(a)) V(q)V(q)^* V(p) T \\
=& T^* \pi(\alpha_{pq}(a)) V(p) T - T^* V(q) \pi(\alpha_{p}(a)) V(q)^* V(p) T \\
=& T^* \pi(\alpha_{pq}(a)) V(p) T.
\end{align*}
Therefore, all entries of $V_1(p)\pi_1(a)$ and $\pi_1(\alpha_p(a))V_1(p)$ match. \end{proof} 

The isometric right covariant representation $(\pi_1,V_1)$ is a non-trivial dilation of $(\pi,V)$. Indeed, by construction,
\[V_1(q) = \begin{bmatrix} V(p) & T \\ 0 & T^*V(q)T \end{bmatrix} = \begin{bmatrix} V(p) & T \\ 0 & 0 \end{bmatrix}.\]
The Hilbert space $\cH$ is clearly not a reducing subspace for $V_q(1)$. Therefore, $(\pi,V)$ is not maximal. Combined with Proposition \ref{prop.maximal}, we obtain:

\begin{corollary}\label{cor.maximal} An isometric right covariant representation is maximal if and only if it is isometric covariant. 
\end{corollary}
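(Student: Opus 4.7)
The plan is to observe that this corollary follows immediately by combining the two propositions already established in this section, Proposition \ref{prop.maximal} and Proposition \ref{prop.dilation.base}, with a small verification that the produced dilation is indeed non-trivial. I would split the argument along the two implications.

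For the forward direction — isometric covariant implies maximal — there is nothing new to do, since this is precisely the content of Proposition \ref{prop.maximal}.

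For the reverse direction, I would argue by contrapositive. Suppose $(\pi,V)$ is an isometric right covariant representation that is \emph{not} isometric covariant. By Proposition \ref{prop.condition} there exists $q \in P$ with $V(q)V(q)^* \neq \pi(\alpha_q(1))$, and by Lemma \ref{lm.proj.ineq} the inequality is strict. Hence the subspace $\cL_1 = (\pi(\alpha_q(1)) - V(q)V(q)^*)\cH$ is nonzero, and the partial isometry $T : \cL_1 \to \cH$ is nontrivial. I can then invoke the construction of $(\pi_1, V_1)$ from Proposition \ref{prop.dilation.base} on the strictly larger Hilbert space $\cH_1 = \cH \oplus \cL_1$. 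The block-diagonal form of $\pi_1$ and block-upper-triangular form of $V_1$ make it immediate that compressing to $\cH$ recovers $(\pi,V)$, so $(\pi_1, V_1)$ is a dilation.

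The one step that requires genuine verification is confirming that this dilation is non-trivial, equivalently that $\cH$ is not reducing for $V_1$. Examining the block entries of $V_1(q)$ and simplifying using $V(q)^*V(q) = I$ together with $T^*V(q) = 0$ from Lemma \ref{lm.orthogonal}, the upper-right block collapses to the nonzero operator $T$. This prevents $\cH$ from being co-invariant for $V_1(q)$, so $(\pi_1, V_1)$ cannot split as $(\pi,V) \oplus (\psi,W)$. Therefore $(\pi,V)$ admits a non-trivial dilation and is not maximal, completing the contrapositive. I do not expect any real obstacle here — the genuine work (constructing $(\pi_1, V_1)$ and checking its covariance, isometry, and multiplicativity) is already absorbed into Proposition \ref{prop.dilation.base}, and this corollary is essentially a packaging statement.
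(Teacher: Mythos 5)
Your proposal is correct and follows essentially the same route as the paper: one direction is Proposition \ref{prop.maximal}, and the other is obtained by noting that when $V(q)V(q)^*\neq\pi(\alpha_q(1))$ the space $\cL_1$ is nonzero and the $(1,2)$ entry of $V_1(q)$ simplifies to $T\neq 0$, so $\cH$ is not reducing and $(\pi,V)$ is not maximal. The only cosmetic slip is attributing the collapse of the upper-right block to Lemma \ref{lm.orthogonal} (that lemma kills the lower-right block $T^*V(q)T$; the upper-right block is just $V(q)^*V(q)T=T$), which does not affect the argument.
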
 

Using the technique of Dritschel and McCullough \cite{DM2005} , Theorem \ref{thm.envelope} is an immedate consequence of Corollary \ref{cor.maximal}.

Recall that a non-self-adjoint operator algebra $\cS$ is called \emph{hyperrigid} \cite{Arveson2011} if for every $*$-representation $\pi$ of $C^*_{env}(S)$, the restriction of $\pi$ on $S$ has the unique extension property, that is: there exists a unique unital completely positive map $\phi$ on $C^*_{env}(S)$ that agrees with $\pi$ on $S$. It is known that having the unique extension property is equivalent to being a maximal representation. By Theorem \ref{thm.envelope} and Proposition \ref{prop.maximal}, we have that $\cA\rtimes_\alpha^{iso} P$ is hyperrigid. This is kindly pointed out by Evgenios Kakariadis.  

\begin{corollary} For an injective semigroup dynamical system $(\cA,P,\alpha)$ over an abelian semigroup $P$ with its projections aligned and each $\alpha_p(\cA)$ being hereditary, $\cA\rtimes_\alpha^{iso} P$ is hyperrigid
\end{corollary}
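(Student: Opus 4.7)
The plan is to apply directly the characterization of hyperrigidity stated in the paragraph preceding the corollary: by Theorem \ref{thm.envelope}, $C^*_{env}(\cA\rtimes_\alpha^{iso} P) = \cA\rtimes_\alpha^* P$, and hyperrigidity amounts to showing that for every $*$-representation $\rho$ of $\cA\rtimes_\alpha^* P$, the restriction $\rho|_{\cA\rtimes_\alpha^{iso} P}$ is a maximal representation, which by the stated equivalence is the same as having the unique extension property.

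By the universal property of $\cA\rtimes_\alpha^* P$, every nondegenerate $*$-representation $\rho$ is determined by an isometric covariant pair $(\pi, V)$ for $(\cA, P, \alpha)$: one takes $\pi = \rho|_\cA$ and $V(p)$ the image under $\rho$ of the universal isometry indexed by $p$. Restricting $\rho$ to the non-self-adjoint subalgebra $\cA\rtimes_\alpha^{iso} P$ gives precisely the completely contractive representation sending $a\otimes\delta_p$ to $\pi(a)V(p)$, i.e. the representation induced by $(\pi,V)$ viewed as an isometric right covariant pair.

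Proposition \ref{prop.maximal} asserts exactly that every isometric covariant representation is a maximal representation of $\cA\rtimes_\alpha^{iso} P$. Therefore $\rho|_{\cA\rtimes_\alpha^{iso} P}$ is maximal, and hyperrigidity follows. There is no real obstacle: the corollary is a direct assembly of Theorem \ref{thm.envelope}, Proposition \ref{prop.maximal}, and the standard equivalence between maximality and the unique extension property for representations of unital operator algebras. The only point requiring any verification is that the restriction of a $*$-representation of $\cA\rtimes_\alpha^* P$ to the non-self-adjoint subalgebra corresponds, under the universal properties of the two algebras, to the natural embedding of isometric covariant pairs into isometric right covariant pairs established in Proposition \ref{prop.condition}.
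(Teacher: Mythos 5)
Your proposal is correct and follows exactly the paper's argument: the paper also deduces hyperrigidity directly from Theorem \ref{thm.envelope} (identifying the $C^*$-envelope as $\cA\rtimes_\alpha^* P$), Proposition \ref{prop.maximal} (isometric covariant representations are maximal), and the stated equivalence between maximality and the unique extension property. The only difference is that you spell out the routine identification of $*$-representations of $\cA\rtimes_\alpha^* P$ with isometric covariant pairs, which the paper leaves implicit.
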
 

For the rest of this section, we shall briefly go over the dilation from an isometric right covariant representation to an isometric covariant representation in the case when $P$ is countable. One may note that this process is essentially an explicit reconstruction of the abstract dilation result of Dritschel and McCullough \cite[Theorem 2.1]{DM2005}.

\begin{lemma}\label{lm.preservation} For any $p\in P$. If $V(p)V(p)^*=\pi(\alpha_p(1))$, then $V_1(p)V_1(p)^*=\pi_1(\alpha_p(1))$. 
\end{lemma}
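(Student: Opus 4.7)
My plan is to verify the identity by a direct block-matrix computation on $\cH_1 = \cH \oplus \cL_1$. Expanding using the definitions of $V_1(p)$ and $\pi_1(\alpha_p(1))$, the entries of $V_1(p)V_1(p)^*$ are $V(p)V(p)^* + V(q)^*V(p)TT^*V(p)^*V(q)$ at $(1,1)$, $V(q)^*V(p)TT^*V(p)^*T$ at $(1,2)$ (with the adjoint at $(2,1)$), and $T^*V(p)TT^*V(p)^*T$ at $(2,2)$; the target $\pi_1(\alpha_p(1))$ is the block-diagonal matrix $\diag(\pi(\alpha_p(1)), T^*\pi(\alpha_{qp}(1))T)$. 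I aim to verify entry-by-entry equality.

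The computation is driven by two preliminary identities that I would isolate first. The first is purely geometric: since $P$ is abelian, $V(pq) = V(q)V(p)$, so the range of $V(pq)$ is contained in $V(q)\cH$; meanwhile the range of $T$ is $\pi(\alpha_q(1))\cH \ominus V(q)\cH$, which is orthogonal to $V(q)\cH$ by construction, so that
\[ V(pq)^*T = 0. \]
The second uses the hypothesis: by Proposition \ref{prop.condition}, $V(p)V(p)^* = \pi(\alpha_p(1))$ upgrades the right-covariance at $p$ to the full covariance $V(p)\pi(a)V(p)^* = \pi(\alpha_p(a))$. Specialising to $a = \alpha_q(1)$ and expanding $TT^* = \pi(\alpha_q(1)) - V(q)V(q)^*$ yields
\[ V(p)TT^*V(p)^* = \pi(\alpha_{pq}(1)) - V(pq)V(pq)^*. \]

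With these in hand each block entry collapses. For $(1,1)$, conjugating the second identity by $V(q)^*$ on the left and $V(q)$ on the right gives $V(q)^*\pi(\alpha_{pq}(1))V(q) - V(q)^*V(pq)V(pq)^*V(q)$, and both summands equal $\pi(\alpha_p(1))$ (the first via the adjoint of the right-covariance relation at $q$, and the second via $V(q)^*V(pq) = V(p)$ and $V(q)^*V(p)^* = V(p)^*V(q)^*$ together with the hypothesis), so they cancel; what remains is $V(p)V(p)^* = \pi(\alpha_p(1))$. The $(1,2)$-entry becomes $V(q)^*[\pi(\alpha_{pq}(1)) - V(pq)V(pq)^*]T$, and both summands vanish: the first equals $\pi(\alpha_p(1))V(q)^*T = 0$ by Lemma \ref{lm.orthogonal}, while the second is $V(p)\cdot V(pq)^*T = 0$ by the first preliminary identity. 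The $(2,2)$-entry becomes $T^*[\pi(\alpha_{pq}(1)) - V(pq)V(pq)^*]T$; once again the second summand vanishes, leaving $T^*\pi(\alpha_{pq}(1))T = T^*\pi(\alpha_{qp}(1))T$. The main obstacle is spotting and isolating the vanishing $V(pq)^*T = 0$ and the clean form of $V(p)TT^*V(p)^*$; once these are available, the rest is mechanical bookkeeping using Lemma \ref{lm.right.cov} and the right-covariance relation.
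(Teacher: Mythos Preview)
Your proof is correct and follows essentially the same block-matrix, entry-by-entry approach as the paper. The only difference is organizational: you isolate upfront the identity $V(p)TT^*V(p)^* = \pi(\alpha_{pq}(1)) - V(pq)V(pq)^*$ (which the hypothesis makes available via Proposition~\ref{prop.condition}) and the vanishing $V(pq)^*T = 0$, whereas the paper expands $TT^* = \pi(\alpha_q(1)) - V(q)V(q)^*$ separately inside each entry; the computations are otherwise the same.
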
 

\begin{proof} We have,
\[V_1(p)V_1(p)^* = \begin{bmatrix} V(p)V(p)^* + V(q)^*V(p)TT^* V(p)^* V(q) & V(q)^* V(p)TT^* V(p)^* T \\
T^* V(p) TT^* V(p)^* V(q) & T^* V(p) TT^* V(p)^* T 
\end{bmatrix}. \]
Since $V(p)V(p)^*=\pi(\alpha_p(1))$, we have
\begin{align*}
 &V(q)^*V(p)TT^* V(p)^* V(q)\\
=& V(q)^*V(p) \pi(\alpha_q(1))  V(p)^* V(q) - V(q)^*V(p) V(q)V(q)^* V(p)^* V(q) \\
=& V(q)^* \pi(\alpha_{pq}(1)) V(p)V(p)^* V(q) - V(q)^*V(q)V(p)V(p)^*V(q)^*V(q) \\
=& V(q)^* \pi(\alpha_{pq}(1)) \pi(\alpha_p(1)) V(q) -  \pi(\alpha_p(1))  \\
=& \pi(\alpha_{p}(1)) V(q)^* V(q) -  \pi(\alpha_p(1)) =0.
\end{align*}
And,
\begin{align*}
 & V(q)^* V(p)TT^* V(p)^* T\\
=& V(q)^* V(p) \pi(\alpha_q(1)) V(p)^* T - V(q)^* V(p) V(q)V(q)^* V(p)^* T \\
=& V(q)^* \pi(\alpha_{pq}(1)) T - V(p)V(p)^* V(q)^* T \\
=& \pi(\alpha_p(1))V(q)^* T  = 0.
\end{align*}
Similarly, $T^* V(p) TT^* V(p)^* V(q)=0$. Finally,
\begin{align*}
 &T^* V(p) TT^* V(p)^* T\\
=& T^* V(p) \pi(\alpha_q(1)) V(p)^* T - T^* V(p) V(q)V(q)^* V(p)^* T \\
=& T^*  \pi(\alpha_{pq}(1))V(p) V(p)^* T - T^*V(q) V(p)V(p)^* V(q)^* T \\
=&  T^*  \pi(\alpha_q(\alpha_p(1))) T.
\end{align*}
Therefore, $V_1(p)V_1(p)^*=\pi_1(\alpha_p(1))$. 
\end{proof} 

Now $V(q)$ is dilated to 
\[V_1(q)=\begin{bmatrix} V(q) & V(q)^* V(q) T \\ 0& T^* V(q) T \end{bmatrix} = \begin{bmatrix} V(q) & T \\ 0 & 0 \end{bmatrix}.\]

\begin{lemma}\label{lm.restriction} We have \[\pi_1(\alpha_q(1))-V_1(q)V_1(q)^*=I_{\cL_1}.\]
In particular, $(\pi_1(\alpha_q(1))-V_1(q)V_1(q)^*)\cH=\{0\}$.
\end{lemma}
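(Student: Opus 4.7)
The plan is a direct block-matrix computation on $\cH_1 = \cH \oplus \cL_1$, isolating the nonzero piece to the bottom-right block. Starting from the simplified form
\[
V_1(q) = \begin{bmatrix} V(q) & T \\ 0 & 0 \end{bmatrix}
\]
derived just before the lemma (from $V(q)^*V(q) = I$ and $T^*V(q) = 0$ via Lemma~\ref{lm.orthogonal}), block multiplication yields
\[
V_1(q)V_1(q)^* = \begin{bmatrix} V(q)V(q)^* + TT^* & 0 \\ 0 & 0 \end{bmatrix} = \begin{bmatrix} \pi(\alpha_q(1)) & 0 \\ 0 & 0 \end{bmatrix},
\]
where the collapse in the $(1,1)$-entry is the defining identity $TT^* = \pi(\alpha_q(1)) - V(q)V(q)^*$. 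Separately, the definition of $\pi_1$ applied at $a = \alpha_q(1)$ gives
\[
\pi_1(\alpha_q(1)) = \begin{bmatrix} \pi(\alpha_q(1)) & 0 \\ 0 & T^*\pi(\alpha_{q^2}(1))T \end{bmatrix}.
\]

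Subtracting, the $(1,1)$-blocks cancel exactly, leaving only the compression $T^*\pi(\alpha_{q^2}(1))T$ in the bottom-right. The fact that the top row and left column vanish is already the ``In particular'' clause, since it means the operator annihilates every vector of $\cH \subset \cH_1$.

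For the main equality, I would identify $T^*\pi(\alpha_{q^2}(1))T$ with $I_{\cL_1}$ by means of Lemma~\ref{lm.PT}: using $T^*T = I_{\cL_1}$, the task reduces to showing $\pi(\alpha_{q^2}(1))T = T$, equivalently that the range of $T$ sits inside $\pi(\alpha_{q^2}(1))\cH$. The main obstacle is precisely this last identification: a direct appeal to Lemma~\ref{lm.compare} gives the wrong-direction comparison $\alpha_{q^2}(1) \leq \alpha_q(1)$, so one cannot dominate $\pi(\alpha_q(1))$ by $\pi(\alpha_{q^2}(1))$ and invoke Lemma~\ref{lm.PT} outright. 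The plan is instead to exploit the right-covariance intertwining $V(q)\pi(\alpha_q(1)) = \pi(\alpha_{q^2}(1))V(q)$ together with $T^*V(q) = 0$: decomposing $\pi(\alpha_q(1)) = TT^* + V(q)V(q)^*$ and then commuting $\pi(\alpha_{q^2}(1))$ past $V(q)V(q)^*$, the $V(q)V(q)^*$-piece is killed after sandwiching by $T^*$ and $T$, and the $TT^*$-piece returns $T^*\pi(\alpha_q(1))T = T^*T = I_{\cL_1}$. Chasing this computation is the crux of the lemma.
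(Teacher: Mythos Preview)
Your block computation is correct, and in fact more careful than the paper's: the bottom-right entry of the difference is indeed $T^*\pi(\alpha_{q^2}(1))T$, not $T^*\pi(\alpha_q(1))T$ as the paper writes. The paper's version is a slip; with $\alpha_q(1)$ there, Lemma~\ref{lm.PT} gives $I_{\cL_1}$ immediately, which is what the paper records.

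Your instinct that the reduction to $I_{\cL_1}$ is problematic is exactly right, and your proposed workaround cannot succeed --- because the displayed equality $\pi_1(\alpha_q(1))-V_1(q)V_1(q)^*=I_{\cL_1}$ is in general \emph{false}. One can compute directly: since $\pi(\alpha_{q^2}(1))V(q)=V(q)\pi(\alpha_q(1))\cdot 1=V(q)$ (using Lemma~\ref{lm.right.cov} with $a=\alpha_q(1)$), one gets $\pi(\alpha_{q^2}(1))V(q)V(q)^*=V(q)V(q)^*$, and hence
\[
\pi(\alpha_{q^2}(1))\,TT^* \;=\; \pi(\alpha_{q^2}(1))-V(q)V(q)^*,
\]
which equals $TT^*=\pi(\alpha_q(1))-V(q)V(q)^*$ only when $\pi(\alpha_{q^2}(1))=\pi(\alpha_q(1))$. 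For a concrete failure, take any system with $\alpha_{q^2}(1)\lneq\alpha_q(1)$ (e.g.\ the shift on $C(\mathbb{N}\cup\{\infty\})$) and use the tensor representation $\pi=\rho\otimes I$, $V(p)=W(p)\otimes\lambda_p$ from the example following Proposition~\ref{prop.condition} with $\rho$ faithful; then $T^*\pi(\alpha_{q^2}(1))T$ is a proper subprojection of $I_{\cL_1}$. So the ``commute $\pi(\alpha_{q^2}(1))$ past $V(q)V(q)^*$'' manoeuvre runs cleanly, but it lands you at $\pi(\alpha_{q^2}(1))-V(q)V(q)^*$ on $\cL_1$, not at $I_{\cL_1}$.

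What you \emph{have} proved is the only thing the paper actually uses. Your block form shows the entire first column of $\pi_1(\alpha_q(1))-V_1(q)V_1(q)^*$ is zero, hence the operator annihilates $\cH$; this is the ``In particular'' clause, and it is all that feeds into Proposition~\ref{prop.dilation1}. So state the difference as
\[
\pi_1(\alpha_q(1))-V_1(q)V_1(q)^* \;=\; 0_{\cH}\oplus T^*\pi(\alpha_{q^2}(1))T,
\]
note that the second summand is a projection on $\cL_1$ (dominated by $I_{\cL_1}$), and conclude $(\pi_1(\alpha_q(1))-V_1(q)V_1(q)^*)\cH=\{0\}$. That is the correct repair; do not chase the equality.
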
 

\begin{proof} We have
\begin{align*}
\pi_1(\alpha_q(1))-V_1(q)V_1(q)^* &= \begin{bmatrix} \pi(\alpha_q(1)) - V(q)V(q)^* - TT^* & 0 \\ 0 & T^* \pi(\alpha_q(1)) T \end{bmatrix} \\
&= \begin{bmatrix} 0 & 0 \\ 0 & T^* \pi(\alpha_q(1)) T \end{bmatrix} \\
&= \begin{bmatrix} 0 & 0 \\ 0 & I_{\cL_1} \end{bmatrix}
\end{align*}
Therefore, $\pi_1(\alpha_q(1))-V_1(q)V_1(q)^*=I_{\cL_1}$, and since $\cH=\cL_{1}^\perp$ in $\cH_1=\cH\oplus\cL_1$, $(\pi_1(\alpha_q(1))-V_1(q)V_1(q)^*)\cH=\{0\}$
\end{proof} 

\begin{remark} It is not true that $\pi_1(\alpha_q(1))=V_1(q)V_1(q)^*$. However, the difference $(\pi(\alpha_p(1))-V(p)V(p)^*)\cH\subset\cH$ is being pushed to $(\pi_1(\alpha_q(1))-V_1(q)V_1(q)^*)\cH_1=\cL_1=\cH^\perp$. We can now repeat this process and take an inductive limit to get rid of this difference. 
\end{remark} 

\begin{proposition}\label{prop.dilation1} Let $(\pi,V)$ be an isometric right covariant representation on $\bh{H}$ where $V(q)V(q)^*\neq \pi(\alpha_q(1))$ for some $q\in P$. Then there exists an isometric right covariant dilation $(\rho,W)$ of $(\pi,V)$ such that $W(q)W(q)^*=\rho(\alpha_q(1))$. 

Moreover, if $V(p)V(p)^*=\pi(\alpha_p(1))$ for some $p\in P$, then $W(p)W(p)^*=\rho(\alpha_p(1))$.
\end{proposition}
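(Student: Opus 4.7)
The plan is to iterate the base dilation of Proposition \ref{prop.dilation.base} at the fixed element $q$ and pass to the inductive limit. Set $(\pi_0,V_0)=(\pi,V)$ on $\cH_0=\cH$. At each stage $n\geq 0$, apply Proposition \ref{prop.dilation.base} to $(\pi_n,V_n)$ with respect to $q$ to obtain an isometric right covariant representation $(\pi_{n+1},V_{n+1})$ on $\cH_{n+1}=\cH_n\oplus\cL_{n+1}$, where $\cL_{n+1}$ corresponds to the current defect space $(\pi_n(\alpha_q(1))-V_n(q)V_n(q)^*)\cH_n$. Reading off the block-matrix formulas for $\pi_{n+1}$ and $V_{n+1}$ in Proposition \ref{prop.dilation.base} shows that $\cH_n$ is a reducing subspace for $\pi_{n+1}$ and an invariant subspace for $V_{n+1}$, with $\pi_{n+1}|_{\cH_n}=\pi_n$ and $V_{n+1}|_{\cH_n}=V_n$.

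With this compatibility in hand, form the inductive limit
\[\cK\;=\;\overline{\bigcup_{n\geq 0}\cH_n}\;=\;\cH\oplus\bigoplus_{n\geq 1}\cL_n.\]
Define $\rho$ and $W$ on the dense subspace $\bigcup_n\cH_n$ by $\rho(a)|_{\cH_n}=\pi_n(a)$ and $W(p)|_{\cH_n}=V_n(p)$; these are consistent by the compatibility above. Since $\|\pi_n(a)\|\leq\|a\|$ and $\|V_n(p)\|=1$ uniformly in $n$, both maps extend continuously to bounded operators on $\cK$. The same density-plus-continuity argument promotes the identities that hold stage by stage to identities on $\cK$: $\rho$ is a unital $*$-homomorphism, $W$ is an isometric representation of $P$ with $W(e)=I$, and $W(p)\rho(a)=\rho(\alpha_p(a))W(p)$ for all $p\in P$ and $a\in\cA$. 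Thus $(\rho,W)$ is an isometric right covariant dilation of $(\pi,V)$.

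To establish $W(q)W(q)^*=\rho(\alpha_q(1))$, apply Lemma \ref{lm.restriction} at stage $n+1$: the defect projection $\pi_{n+1}(\alpha_q(1))-V_{n+1}(q)V_{n+1}(q)^*$ is $I_{\cL_{n+1}}$, supported entirely on the freshly added layer $\cL_{n+1}\perp\cH_n$. Consequently, for every $\xi\in\cH_n$, $(\rho(\alpha_q(1))-W(q)W(q)^*)\xi=0$. Density of $\bigcup_n\cH_n$ in $\cK$ and norm-continuity of the defect operator then give $W(q)W(q)^*=\rho(\alpha_q(1))$. For the ``moreover'' clause, Lemma \ref{lm.preservation} shows that the property $V_n(p)V_n(p)^*=\pi_n(\alpha_p(1))$ is preserved at each step of the iteration; starting from $V_0(p)V_0(p)^*=\pi_0(\alpha_p(1))$ and passing to the limit yields $W(p)W(p)^*=\rho(\alpha_p(1))$.

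The main obstacle is the bookkeeping required to see that the sequence of dilations glues into a well-defined inductive limit; this rests entirely on the explicit $(1,1)$ block of the formulas in Proposition \ref{prop.dilation.base} being exactly $(\pi_n,V_n)$, together with the fact that Lemma \ref{lm.restriction} isolates the defect in the newly added summand $\cL_{n+1}$ at each stage, so that successive ``corrections'' do not interfere with one another on the previously built spaces.
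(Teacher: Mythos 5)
Your proposal is correct and follows essentially the same route as the paper: iterate Proposition \ref{prop.dilation.base} at the fixed $q$, use Lemma \ref{lm.restriction} to see that each step pushes the defect of $\pi(\alpha_q(1))-V(q)V(q)^*$ entirely into the newly added summand, pass to the inductive limit, and invoke Lemma \ref{lm.preservation} for the ``moreover'' clause. The only difference is that you spell out the compatibility and density arguments for the inductive limit in slightly more detail than the paper does.
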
 

\begin{proof} We start with $\cH_0=\cH$ and $\pi_0=\pi, V_0=V$. When $V_n(q)V_n(q)^*\neq \pi_n(\alpha_q(1))$, we let $\cL_{n+1}=(\pi_n(\alpha_q(1))-V_n(q)V_n(q)^*)\cH_n$ be the subspace in $\cH_n$, and let $T_n:\cL_{n+1}\to\cH_n$ be the inclusion map. Define a dilation $(\pi_{n+1},V_{n+1})$ on $\cH_{n+1}=\cH_n\oplus\cL_{n+1}$ by 
\begin{align*}
\pi_{n+1}(a) &= \begin{bmatrix} \pi_n(a) & 0 \\ 0 & T^* \pi_n(\alpha_q(a)) T \end{bmatrix} ; \\
V_{n+1}(p) &= \begin{bmatrix} V_n(p) & V_n(q)^* V_n(p) T \\ 0 & T^* V_n(p) T \end{bmatrix}. 
\end{align*}
Proposition \ref{prop.dilation.base} shows that $(\pi_{n+1},V_{n+1})$ is also an isometric right covariant representation. Moreover, Lemma \ref{lm.restriction} implies that 
\[(\pi_{n+1}(\alpha_q(1))-V_{n+1}(q)V_{n+1}(q))\cH_n=\{0\}.\] 
Take the inductive limit of this process: we get a dilation $(\rho,W)$ on $\cK=\cH\oplus\left(\bigoplus_{n=1}^\infty \cL_n\right)=\displaystyle{\lim_{\longrightarrow}} \cH_n$. This is an isometric right covariant representation since each $(\pi_n, V_n)$ is. Moreover, $\rho(\alpha_q(1))=W(q)W(q)^*$ since $(\rho(\alpha_q(1))-W(q)W(q)^*)\cH_n=\{0\}$ and the Hilbert space $\cK$ is the inductive limit of $\cH_n$. 

Moreover, if $V(p)V(p)^*=\pi(\alpha_p(1))$ for some $p\in P$. Lemma \ref{lm.preservation} implies that $V_n(p)V_n(p)^*=\pi_n(\alpha_p(1))$, which is again preserved by the inductive limit. \end{proof}

Now we can repeatedly apply Proposition \ref{prop.dilation1} to dilate an isometric right covariant representation to an isometric covariant one. Since $P$ is countable, there exists countably many $q\in P$ with $V(q)V(q)^*\neq \pi(\alpha_q(1))$. Applying Proposition \ref{prop.dilation1} for $q$ will close the gap between $V(q)V(q)^*$ and $\pi(\alpha_q(1))$. Repeating this process at most countably many times will yield a dilation $(\rho,W)$ such that $W(p)W(p)=\rho(\alpha_p(1))$. This dilation is an isometric covariant representation.

\section{Functoriality}\label{sec.functor}

When an algebraic structure embeds inside another, it is often an interesting question on whether this embedding translates into an embedding of their associated operator algebras. This is known as the functoriality of the operator algebra. For example, in the case of algebraic integers in a number field, Cuntz, Deninger, and Laca proved that the $C^*$-algebra $\fT_R$ is functorial, that is when $R\subset S$ are rings of algebraic integers in a number field $K$, this embedding induces an injective $\ast$-homomorphism from the $ax+b$ semigroup $C^*$-algebra $\fT_R$ to $\fT_S$ \cite[Proposition 3.2, Theorem 4.13]{CDL2013}. Similar functoriality results were documented in semigroup $C^*$-algebras of $ax+b$ type semigroups from number fields \cite{XLi2012, Bruce2020}. On the other hand, quotients of semigroup $C^*$-algebras often fail to satisfy functoriality. For example, the map from $\cO_2=C^*(s_1,s_2)$ to $\cO_3=C^*(t_1,t_2,t_3)$ that sends $s_i$ to $t_i$ cannot be extended to a $*$-homomorphism; ring $C^*$-algebras do not satisfy functoriality in general \cite{CDL2013}. 

In this section, we would like to explore functoriality for both the $C^*$-algebra and the non-self-adjoint operator algebra associated with a semigroup dynamical system. 

\begin{definition}\label{df.extension} Let $(\cA,P,\alpha)$ be a semigroup dynamical system. We say a semigroup dynamical system $(\cB,Q,\beta)$ is \emph{an extension} of $(\cA,P,\alpha)$ if 
\begin{enumerate}
\item There exists a unital $\ast$-homomorphism $\iota:\cA\to\cB$. 
\item There exists a unital semigroup homomorphism $j:P\to Q$.
\item For any $p\in P\subset Q$ and $a\in \cA$, 
\[\iota(\alpha_p(a))=\beta_{j(p)}(\iota(a)).\]
\end{enumerate}

In other words, $(\cB,Q,\beta)$ is an extension of $(\cA,P,\alpha)$ if the following diagram commutes:
\begin{figure}[h]
    \centering
    \begin{tikzpicture}[scale=0.9]

    \node at (-2,2) {$\cA$};
    \node at (2,2) {$\cB$};
    \node at (-2,0) {$\cA$};
    \node at (2,0) {$\cB$};

    \draw[->] (-1.5,2) -- (1.5,2);
    \draw[->] (-1.5,0) -- (1.5,0);
	\draw[->] (-2,1.5) -- (-2,0.5);
    \draw[->] (2,1.5) -- (2,0.5);

	\node at (0, 2.3) {$\iota$};
	\node at (-2.3, 1) {$\alpha_p$};
	\node at (1.5, 1) {$\beta_{j(p)}$};
	\node at (0, 0.3) {$\iota$};
    \end{tikzpicture}
\end{figure}
\end{definition}

Throughout this section, we still assume that both $(\cA,P,\alpha)$ and $(\cB,Q,\beta)$ are injective semigroup dynamical systems where their respective projections are aligned and images of $\alpha_p$ and $\beta_q$ are hereditary. 

\begin{lemma}\label{lm.lift} Let $(\cB,Q,\beta)$ be an extension of $(\cA,P,\alpha)$ by the maps $\iota:\cA\to\cB$ and $j:P\to Q$. Let $\rho:\cB\to\bh{H}$ be a unital $*$-homomorphism and $W:Q\to\bh{H}$ be an isometric representation with $W(e)=I$. Define $\pi=\rho\circ \iota:\cA\to\bh{H}$ and $V=W\circ j:P\to\bh{H}$. 
\begin{enumerate}
\item If $(\rho,W)$ is an isometric covariant representation of $(\cB,Q,\beta)$, then $(\pi,V)$ is an isometric covariant representation of $(\cA,P,\alpha)$. 
\item If $(\rho,W)$ is an isometric right covariant representation of $(\cB,Q,\beta)$, then $(\pi,V)$ is an isometric right covariant representation of $(\cA,P,\alpha)$. 
\end{enumerate}
\end{lemma}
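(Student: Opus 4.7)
The plan is a direct verification by chasing the commutative diagram in Definition \ref{df.extension}. Both parts share the same preliminary checks that $\pi$ and $V$ satisfy the basic structural requirements, so I would handle these first and then branch into the two covariance computations.

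\textbf{Preliminaries.} First I would observe that $\pi = \rho \circ \iota$ is a unital $\ast$-homomorphism since it is the composition of two unital $\ast$-homomorphisms, which is immediate. Next, since $j$ is a unital semigroup homomorphism (so in particular $j(e) = e$) and $W$ satisfies $W(e) = I$, I get $V(e) = W(j(e)) = W(e) = I$. For the semigroup property, $V(pp') = W(j(pp')) = W(j(p)j(p')) = W(j(p))W(j(p')) = V(p)V(p')$. Each $V(p) = W(j(p))$ inherits isometricity from $W$, so $V$ is an isometric representation of $P$.

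\textbf{Part (1).} Here I would take any $p \in P$ and $a \in \cA$ and compute
\begin{align*}
V(p)\pi(a)V(p)^* &= W(j(p))\,\rho(\iota(a))\,W(j(p))^* \\
&= \rho\bigl(\beta_{j(p)}(\iota(a))\bigr) \\
&= \rho(\iota(\alpha_p(a))) \\
&= \pi(\alpha_p(a)),
\end{align*}
where the second equality uses the isometric covariance of $(\rho,W)$ for $(\cB,Q,\beta)$ at the element $j(p) \in Q$, and the third uses the extension condition $\iota \circ \alpha_p = \beta_{j(p)} \circ \iota$ from Definition \ref{df.extension}.

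\textbf{Part (2).} The computation is essentially the same but without the right factor of $V(p)^*$:
\begin{align*}
V(p)\pi(a) &= W(j(p))\,\rho(\iota(a)) \\
&= \rho\bigl(\beta_{j(p)}(\iota(a))\bigr)\,W(j(p)) \\
&= \rho(\iota(\alpha_p(a)))\,V(p) \\
&= \pi(\alpha_p(a))\,V(p),
\end{align*}
using the isometric right covariance of $(\rho,W)$ in the second line and the extension identity in the third.

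\textbf{Expected obstacle.} There is essentially no mathematical difficulty here; the lemma is a bookkeeping statement. The only thing to be careful about is that the extension condition need only hold for $p \in P$ (and $j(p) \in Q$), not for arbitrary elements of $Q$, which is exactly what is used above. In particular one does not need any compatibility between the hereditary/aligned-projection hypotheses on the two systems for this lemma, since those play no role in the bare definitions of (right) covariance.
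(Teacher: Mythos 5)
Your proposal is correct and follows essentially the same route as the paper: verify the structural properties of $\pi$ and $V$ by composition, then chase the extension identity $\iota\circ\alpha_p=\beta_{j(p)}\circ\iota$ through the two covariance computations. The paper's proof is the same direct verification, so there is nothing to add.
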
 

\begin{proof} First of all, $\pi$ is a composition of two unital $*$-homomoprhisms so that it is also a unital $*$-homomorphism from $\cA$ to $\bh{H}$. Since $W$ is an isometric representation of $Q$, composing with the unital semigroup homomorphism $j$ yields an isometric representation $V$ of $P$ with $V(e)=W(e)=I$. 

For (1), when $(\rho, W)$ is isometric covariant, for any $p\in P$ and $a\in \cA$, we have 
\begin{align*}
V(p)\pi(a)V(p)^* &= W(j(p)) \rho(\iota(a)) W(j(p))^*  \\
&= \rho(\beta_{j(p)}(\iota(a))) \\
&=\rho(\iota(\alpha_p(a)))=\pi(\alpha_p(a)).
\end{align*} 

For (2), when $(\rho, W)$ is isometric right covariant, for any $p\in P$ and $a\in \cA$, we have 
\begin{align*}
V(p)\pi(a) &= W(j(p)) \rho(\iota(a))   \\
&= \rho(\beta_{j(p)}(\iota(a))) W(j(p)) \\
&=\rho(\iota(\alpha_p(a))) W(j(p)) =\pi(\alpha_p(a))V(p). \qedhere
\end{align*} 
\end{proof} 

\begin{proposition}\label{prop.functor.star} Let $(\cB,Q,\beta)$ be an extension of $(\cA,P,\alpha)$ by the maps $\iota:\cA\to\cB$ and $j:P\to Q$. Let $(\pi^u, V^u)$ be the universal isometric covariant representation for $(\cA,P,\alpha)$, and $(\rho^u, W^u)$ be the universal isometric covariant representation for $(\cB,Q,\beta)$. 

Then there exists a $*$-homomorphism $\phi: \cA\rtimes_\alpha^* P \to \cB\rtimes_\beta^* Q$ such that for any $a\in \cA$, $\phi(\pi^u(a))=\rho^u(\iota(a))$; for any $p\in P$, $\phi(V^u(p))=W^u(j(p))$. 
\end{proposition}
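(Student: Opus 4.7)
The plan is to invoke the universal property of $\cA\rtimes_\alpha^* P$ directly: any isometric covariant representation of $(\cA,P,\alpha)$ with image in a target $C^*$-algebra induces a $*$-homomorphism from $\cA\rtimes_\alpha^* P$ to that target. So the strategy is to produce a canonical isometric covariant representation of $(\cA,P,\alpha)$ inside $\cB\rtimes_\beta^* Q$ by composing with the extension maps, and then let the universal property do the work.

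Concretely, first I would realize $\cB\rtimes_\beta^* Q$ faithfully on some Hilbert space $\cH$ via its universal representation $(\rho^u, W^u)$, so that $\rho^u:\cB\to\bh{H}$ is a unital $*$-homomorphism, $W^u:Q\to\bh{H}$ is an isometric representation, and $\cB\rtimes_\beta^* Q = C^*(\rho^u(\cB), W^u(Q))\subset \bh{H}$. Then I would set
\[
\pi := \rho^u\circ\iota:\cA\to\bh{H}, \qquad V := W^u\circ j:P\to\bh{H}.
\]
By Lemma \ref{lm.lift}(1), the pair $(\pi,V)$ is an isometric covariant representation of $(\cA,P,\alpha)$. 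Moreover, by construction, every value $\pi(a)=\rho^u(\iota(a))$ and $V(p)=W^u(j(p))$ already lies inside the $C^*$-algebra $\cB\rtimes_\beta^* Q$.

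Next, I would invoke the universal property of $\cA\rtimes_\alpha^* P$: there exists a (unique) $*$-homomorphism
\[
\phi: \cA\rtimes_\alpha^* P \longrightarrow C^*(\pi(\cA),V(P)) \subset \cB\rtimes_\beta^* Q
\]
such that $\phi(\pi^u(a))=\pi(a)=\rho^u(\iota(a))$ for all $a\in\cA$ and $\phi(V^u(p))=V(p)=W^u(j(p))$ for all $p\in P$. This is exactly the $*$-homomorphism asserted in the proposition.

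There is essentially no obstacle here — the content is packaged into Lemma \ref{lm.lift}, which verifies that the three axioms of an isometric covariant representation are preserved under the composition with $\iota$ and $j$; the rest is the universal property of $\cA\rtimes_\alpha^* P$. The only mild point to note is that one should view $\cA\rtimes_\alpha^* P$ as universal \emph{among all isometric covariant representations into arbitrary $C^*$-algebras}, not merely $B(\cH)$; this is automatic since any $C^*$-algebra sits inside some $B(\cH)$ and the induced map then factors through the $C^*$-subalgebra generated by the image.
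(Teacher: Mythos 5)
Your proposal is correct and follows essentially the same route as the paper: compose the universal representation $(\rho^u,W^u)$ with $\iota$ and $j$, apply Lemma \ref{lm.lift}(1) to see that the resulting pair is isometric covariant, and then invoke the universal property of $(\pi^u,V^u)$, noting that the image lands inside $\cB\rtimes_\beta^* Q$. No issues.
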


\begin{proof} Let $\pi=\rho^u\circ \iota$ and $V=W^u\circ j$. Lemma \ref{lm.lift} proves that $(\pi,V)$ is an isometric $^*$-covariant representation. By the universality of $(\pi^u, V^u)$, there exists a $*$-homomorphism $\phi$ such that for any $a\in \cA$, $\phi(\pi^u(a))=\rho^u(\iota(a))$; for any $p\in P$, $\phi(V^u(p))=W^u(j(p))$. The image of $\phi$ is inside $\cB\rtimes_\beta^* Q$ so that $\phi$ is the desired map.
\end{proof} 

\begin{proposition}\label{prop.functor.nsa} Let $(\cB,Q,\beta)$ be an extension of $(\cA,P,\alpha)$  by the maps $\iota:\cA\to\cB$ and $j:P\to Q$. Then the map $\psi:c_{00}(P,\cA)\to c_{00}(Q,\cB)$ defined by 
\[\psi(a\otimes\delta_p)=\iota(a)\otimes \delta_{j(p)},\]
extends to a completely contractive homomorphism from $\cA\rtimes^{iso}_\alpha P$ to $\cB\rtimes^{iso}_\beta Q$. 
\end{proposition}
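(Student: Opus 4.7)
The plan is to first check that $\psi$ is a well-defined algebra homomorphism on $c_{00}(P,\cA)$, and then to establish complete contractivity by pulling back isometric right covariant representations of $(\cB,Q,\beta)$ along $(\iota,j)$.

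First I would verify on generators that $\psi$ respects multiplication. Using the definition of multiplication in $c_{00}(P,\cA)$ and $c_{00}(Q,\cB)$ together with Definition \ref{df.extension}, one computes
\[\psi\bigl((a\otimes\delta_p)(b\otimes\delta_q)\bigr)=\iota(a\alpha_p(b))\otimes\delta_{j(pq)}=\iota(a)\beta_{j(p)}(\iota(b))\otimes\delta_{j(p)j(q)},\]
which equals $\psi(a\otimes\delta_p)\psi(b\otimes\delta_q)$ because $\iota\circ\alpha_p=\beta_{j(p)}\circ\iota$ and $j$ is a semigroup homomorphism. Linearity in the $\cA$-argument follows from linearity of $\iota$, so $\psi$ extends (uniquely) to an algebra homomorphism from $c_{00}(P,\cA)$ to $c_{00}(Q,\cB)$, and amplifies entrywise to $\psi^{(n)}:\cM_n(c_{00}(P,\cA))\to \cM_n(c_{00}(Q,\cB))$ for each $n$.

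The main step is complete contractivity. Fix $n\geq 1$ and $x\in\cM_n(c_{00}(P,\cA))$. Let $(\rho,W)$ be an arbitrary isometric right covariant representation of $(\cB,Q,\beta)$ on a Hilbert space $\cH$. By Lemma \ref{lm.lift}(2), the pair $(\pi,V):=(\rho\circ\iota,\,W\circ j)$ is an isometric right covariant representation of $(\cA,P,\alpha)$ on the same Hilbert space. A direct check on generators shows that
\[\Phi_{\rho,W}\bigl(\psi(a\otimes\delta_p)\bigr)=\rho(\iota(a))W(j(p))=\pi(a)V(p)=\Phi_{\pi,V}(a\otimes\delta_p),\]
so by linearity and entrywise extension $\Phi_{\rho,W}^{(n)}\circ\psi^{(n)}=\Phi_{\pi,V}^{(n)}$ on $\cM_n(c_{00}(P,\cA))$. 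Hence
\[\|\Phi_{\rho,W}^{(n)}(\psi^{(n)}(x))\|=\|\Phi_{\pi,V}^{(n)}(x)\|\leq\|x\|_n.\]
Taking the supremum of the left-hand side over all isometric right covariant representations $(\rho,W)$ of $(\cB,Q,\beta)$ yields $\|\psi^{(n)}(x)\|_n\leq\|x\|_n$, which is the required complete contractivity.

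Finally, since $\psi$ is completely contractive with respect to the universal matrix norms defining $\cA\rtimes_\alpha^{iso}P$ and $\cB\rtimes_\beta^{iso}Q$, it extends by continuity to a completely contractive algebra homomorphism between the norm closures. There is no genuine obstacle here; the only subtlety worth noting is the mild technical point that the norm on $\cA\rtimes_\alpha^{iso}P$ is defined as a supremum over isometric right covariant representations of $(\cA,P,\alpha)$, so the whole argument rests on Lemma \ref{lm.lift} producing such a representation from each representation of $(\cB,Q,\beta)$.
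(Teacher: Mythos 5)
Your proposal is correct and follows essentially the same route as the paper: verify the homomorphism property on generators using $\iota\circ\alpha_p=\beta_{j(p)}\circ\iota$, then pull back each isometric right covariant representation of $(\cB,Q,\beta)$ via Lemma \ref{lm.lift} to dominate the universal norm of $\psi(x)$ by that of $x$. The only difference is that you spell out the matrix-level amplification explicitly, whereas the paper dispatches complete contractivity with ``a similar argument.''
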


\begin{proof} First of all, for any $a,b\in \cA$ and $p,q\in P\subset Q$, 
\begin{align*}
\psi((a\otimes\delta_p)(b\otimes \delta_q)) &= \psi(a\alpha_p(b)\otimes\delta_{pq}) \\
&= \iota(a\alpha_p(b))\otimes\delta_{j(pq)} \\
&= \iota(a) \beta_{j(p)}(\iota(b))\otimes \delta_{j(p)j(q)} \\
&= (\iota(a)\otimes\delta_{j(p)})(\iota(b)\otimes\delta_{j(q)}) \\
&= \psi(a\otimes\delta_p)\psi(b\otimes\delta_q).
\end{align*}

Hence, $\psi$ is a homomorphism. 

Take $x\in c_{00}(P,\cA)$ where 
\[x=\sum_{i=1}^k a_i\otimes \delta_{p_i}.\]

By definition, 
\[\psi(x)=\sum_{i=1}^k \iota(a_i)\otimes \delta_{j(p_i)}\in c_{00}(Q,\cB).\]

By the definition of the norm,
\[\|\psi(x)\| = \sup\{\|\sum_{i=1}^k \rho(\iota(a_i))W(j(p_i))\|: (\rho,W) \mbox{ is isometric right covariant}\}. \]

By Lemma \ref{lm.lift}, $(\rho\circ\iota, W\circ j)$ is an isometric right covariant representation for $(\cA,P,\alpha)$. Therefore,
\begin{align*}
\|\psi(x)\| &\leq  \sup\{\|\sum_{i=1}^k \pi(a_i)V(p_i)\|: (\pi,V) \mbox{ is isometric right covariant}\}
= \|x\|.
\end{align*}
Therefore, $\psi$ is contractive. One can apply a similar argument to show that $\psi$ is completely contractive, and thus extends to a completely contractive homomorphism from $\cA\rtimes_\alpha^{iso} P$ to $\cB\rtimes_\beta^{iso} Q$.
\end{proof} 

\begin{lemma}\label{lm.functor.ci} Let $\cA_1$ and $\cA_2$ be two operator algebras and $\psi:\cA_1\to\cA_2$ be a completely contractive homomorphism. Let $(C^*_{env}(\cA_k), i_k)$ be the $C^*$-envelopes for $\cA_k$, $k=1,2$, and let $\phi:C^*_{env}(\cA_1)\to C^*_{env}(\cA_2)$ be a $*$-homomorphism such that $\phi\circ i_1 = i_2 \circ \psi$. Then $\phi$ is injective if and only if $\psi$ is completely isometric.  
\end{lemma}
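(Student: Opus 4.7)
The plan is to handle the two directions separately, with the forward direction being essentially a tautology and the reverse direction requiring a quotient argument that invokes the minimality property of the $C^*$-envelope.

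For the forward direction, I would simply unravel definitions: if $\phi$ is an injective $*$-homomorphism between $C^*$-algebras then it is isometric, and more generally completely isometric. Since $i_1$ is completely isometric by definition of the $C^*$-envelope, the composition $\phi \circ i_1 = i_2 \circ \psi$ is completely isometric. Because $i_2$ is also completely isometric, this forces $\psi$ to be completely isometric on every matrix level. This step is routine; it just uses that injective $*$-homomorphisms between $C^*$-algebras preserve norms at every matrix level.

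For the reverse direction, suppose $\psi$ is completely isometric, and let $K=\ker\phi\subset C^*_{env}(\cA_1)$. The goal is to show $K=0$. Let $q:C^*_{env}(\cA_1)\to C^*_{env}(\cA_1)/K$ denote the quotient map, and factor $\phi = \bar\phi \circ q$ where $\bar\phi$ is the induced injective $*$-homomorphism; in particular $\bar\phi$ is completely isometric. Consider the composition $q\circ i_1:\cA_1 \to C^*_{env}(\cA_1)/K$. Applying $\bar\phi$ gives $\bar\phi\circ(q\circ i_1) = \phi\circ i_1 = i_2\circ\psi$, which is completely isometric since both $i_2$ and $\psi$ are. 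Because $\bar\phi$ is isometric, $q\circ i_1$ must be completely isometric as well. Moreover $q\circ i_1(\cA_1)$ generates $C^*_{env}(\cA_1)/K$ as a $C^*$-algebra (its preimage $i_1(\cA_1)$ generates $C^*_{env}(\cA_1)$, and $q$ is surjective). Hence $(C^*_{env}(\cA_1)/K,\, q\circ i_1)$ is a $C^*$-cover of $\cA_1$.

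By the universal property of the $C^*$-envelope $(C^*_{env}(\cA_1), i_1)$, there is a $*$-homomorphism $\Psi:C^*_{env}(\cA_1)/K \to C^*_{env}(\cA_1)$ with $\Psi\circ(q\circ i_1) = i_1$. Then $\Psi\circ q$ and the identity map on $C^*_{env}(\cA_1)$ are two $*$-homomorphisms that agree on $i_1(\cA_1)$; since $i_1(\cA_1)\cup i_1(\cA_1)^*$ generates $C^*_{env}(\cA_1)$, they coincide on all of $C^*_{env}(\cA_1)$. Therefore $\Psi\circ q = \mathrm{id}$, forcing $q$ to be injective and $K=0$, so $\phi$ is injective. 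The delicate step here is confirming that $q\circ i_1$ really gives a $C^*$-cover of $\cA_1$, which depends crucially on the hypothesis that $\psi$ is completely isometric (this is exactly what ensures the map survives the quotient as a completely isometric embedding); the rest is a standard application of the minimality of the $C^*$-envelope.
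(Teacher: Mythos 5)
Your proof is correct and follows essentially the same route as the paper: both directions hinge on the fact that an injective $*$-homomorphism is completely isometric, and the reverse direction applies the minimality of $C^*_{env}(\cA_1)$ to the $C^*$-cover determined by the completely isometric map $i_2\circ\psi$ in order to produce a left inverse. The only cosmetic difference is that you realize this cover as the quotient $C^*_{env}(\cA_1)/\ker\phi$, whereas the paper works directly with the isomorphic image $C^*(i_2\circ\psi(\cA_1))\subset C^*_{env}(\cA_2)$; by the first isomorphism theorem these are the same argument.
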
 

\newpage 

\begin{proof} Consider the following commutative diagram:

\begin{figure}[h]
    \centering

    \begin{tikzpicture}[scale=0.9]

    \node at (-3,2) {$\cA_1$};
    \node at (3.8,2) {$C^*_{env}(\cA_1)$};
    \node at (-3,0) {$\cA_2$};
    \node at (3.8,0) {$C^*_{env}(\cA_2)$};

    \draw[->] (-2.6,2) -- (2.6,2);
    \draw[->] (-2.6,0) -- (2.6,0);
	\draw[->] (-3,1.5) -- (-3,0.5);
    \draw[->] (3.8,1.5) -- (3.8,0.5);

	\node at (0, 2.3) {$i_1$};
	\node at (3.5, 1) {$\phi$};
	\node at (-3.3, 1) {$\psi$};
	\node at (0, 0.3) {$i_2$};
    \end{tikzpicture}

\end{figure}

By the definition of $C^*$-envelope, $i_1$ and $i_2$ are completely isometric. If $\phi$ is an injective $*$-homomorphism, it must be an isometric $*$-homomorphism between two $C^*$-algebras, and thus also completely isometric. Therefore, $\phi\circ i_1=i_2\circ \psi$ is completely isometric, implying that $\psi$ is completely isometric. 

On the other hand, if $\phi$ is completely isometric, let $\cB=C^*(i_2\circ \psi(\cA_1))$, which is a $C^*$-subalgebra in $C^*_{env}(\cA_2)$. The pair $(\cB, i_2\circ\psi)$ is a $C^*$-cover for $\cA_1$. By the minimality of $C^*$-envelope, there exists a $*$-homomorphism $\Phi:\cB\to C^*_{env}(\cA_1)$ so that for any $a\in \cA_1$, $i_1(a)=\Phi(i_2(\psi(a)))$. For any $x=i_1(a)\in i_1(\cA_1)$, 
\[x=\Phi(i_2(\psi(a)))=\Phi(\phi(i_1(a)))=\Phi(\phi(x)).\]
Since $C^*(i_1(\cA_1))=C^*_{env}(\cA_1)$, $\Phi\circ\phi$ is the identity map on $C^*_{env}(\cA_1)$, and thus $\phi$ is injective. 
\end{proof} 

\begin{corollary}\label{cor.functor} Let $(\cB,Q,\beta)$ be an extension of $(\cA,P,\alpha)$ by the maps $\iota:\cA\to\cB$ and $j:P\to Q$. Let $\phi:\cA\rtimes_\alpha^* P\to\cB\rtimes_\beta^* Q$ be the $*$-homomorphism from Proposition \ref{prop.functor.star}, and $\psi:\cA\rtimes^{iso}_\alpha P\to\cB\rtimes^{iso}_\beta Q$ be the completely contractive homomorphism from Proposition \ref{prop.functor.nsa}. Then $\phi$ is injective if and only if $\psi$ is completely isometric. 
\end{corollary}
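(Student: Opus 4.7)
The plan is to recognize this corollary as a direct consequence of Theorem~\ref{thm.envelope} combined with the abstract functoriality result in Lemma~\ref{lm.functor.ci}. By Theorem~\ref{thm.envelope}, the $C^*$-envelopes $C^*_{env}(\cA\rtimes_\alpha^{iso} P)$ and $C^*_{env}(\cB\rtimes_\beta^{iso} Q)$ are $\cA\rtimes_\alpha^* P$ and $\cB\rtimes_\beta^* Q$ respectively, so once the setup is in place Lemma~\ref{lm.functor.ci} should apply, and the entire task reduces to producing the commuting square it requires.

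First I would fix notation for the $C^*$-envelope embeddings. Let $i_1\colon\cA\rtimes_\alpha^{iso} P\to\cA\rtimes_\alpha^* P$ be the canonical completely isometric embedding coming from Theorem~\ref{thm.envelope}. Concretely, since the universal isometric covariant representation $(\pi^u,V^u)$ of $(\cA,P,\alpha)$ is automatically isometric right covariant by Proposition~\ref{prop.condition}, it induces a completely contractive homomorphism $\cA\rtimes_\alpha^{iso} P\to\cA\rtimes_\alpha^* P$ sending $a\otimes\delta_p\mapsto \pi^u(a)V^u(p)$, and Theorem~\ref{thm.envelope} guarantees this map is precisely the $C^*$-envelope embedding, hence completely isometric. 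Define $i_2\colon\cB\rtimes_\beta^{iso} Q\to\cB\rtimes_\beta^* Q$ analogously using $(\rho^u,W^u)$.

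Next I would verify the commutativity $\phi\circ i_1=i_2\circ\psi$. Both compositions are continuous homomorphisms on $\cA\rtimes_\alpha^{iso} P$, so by density it suffices to check agreement on generators $a\otimes\delta_p$ with $a\in\cA$ and $p\in P$. Under $i_2\circ\psi$, such a generator is mapped to $i_2(\iota(a)\otimes\delta_{j(p)})=\rho^u(\iota(a))W^u(j(p))$, while under $\phi\circ i_1$ it is mapped to $\phi(\pi^u(a)V^u(p))=\rho^u(\iota(a))W^u(j(p))$, where the last equality uses the defining property of $\phi$ from Proposition~\ref{prop.functor.star}. The two sides agree, so the square commutes, and Lemma~\ref{lm.functor.ci} yields the desired equivalence.

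I do not anticipate any genuine obstacle: all of the substantive content has already been established. The only mild point of care is confirming that the $C^*$-envelope embedding from Theorem~\ref{thm.envelope} really is the one induced by $(\pi^u,V^u)$, so that its composition with $\phi$ lines up cleanly with the formula in Proposition~\ref{prop.functor.star}; this identification is what makes the generator-level check go through.
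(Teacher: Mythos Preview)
Your proposal is correct and follows essentially the same route as the paper: invoke Theorem~\ref{thm.envelope} to identify the $C^*$-envelopes, then apply Lemma~\ref{lm.functor.ci}. The paper's proof is terser and simply asserts that ``the rest follows immediately'' from Propositions~\ref{prop.functor.star}, \ref{prop.functor.nsa}, and Lemma~\ref{lm.functor.ci}; your explicit verification of the commuting square $\phi\circ i_1=i_2\circ\psi$ on generators is exactly the content being glossed over there.
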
 

\begin{proof} By Theorem \ref{thm.envelope}, $C^*_{env}(\cA\rtimes^{iso}_\alpha P)=\cA\rtimes_\alpha^* P$ and $C^*_{env}(\cB\rtimes^{iso}_\beta Q)=\cB\rtimes_\beta^* Q$. The rest follows immediately from Proposition \ref{prop.functor.star} and \ref{prop.functor.nsa}, and Lemma \ref{lm.functor.ci}.
\end{proof}

\section{Example from number fields and commutative rings}\label{sec.ex}

Let $R$ be a commutative ring and $M$ be a unital sub-semigroup of the commutative multiplicative semigroup $R^\times$. There is a rich literature of $C^*$-algebras associated with the $ax+b$ type semigroups $R\rtimes M$, where $(x,a)(y,b)=(x+ay,ab)$. The $C^*$-algebra often encodes the additive structure of the ring $R$, multiplicative structure of $M$, and certain ideal structure. In its simplest form, when $R=\mathbb{Z}$ and $M=\mathbb{N}^\times$, Cuntz considered the $C^*$-algebra $\cQ_\mathbb{N}$  \cite{Cuntz2000}. More recent developments include ring $C^*$-algebras \cite{CuntzLi2010, XLi2010}, $\mathbb{Z}\rtimes\langle 2\rangle$ \cite{LarsenLi2012}, $R\rtimes R^\times$ for arbitrary algebraic integers $R$ in a number field \cite{CDL2013}, $\mathbb{Z}\rtimes\langle S\rangle$ \cite{BOS2018}, and $R\rtimes R_{\mathfrak{m},\Gamma}$ for congruence monoids \cite{Bruce2020}. 

We assume that the multiplication of $M$ on $R$ is left-cancellative, that is for any $x,y\in R$ and $a\in M$, $ax=ay$ implies $x=y$. With this assumption, the semigroup $R\rtimes M$ is a left-cancellative semigroup: for $(x,a)(y,b)=(x,a)(z,c)$, one has $x+ay=x+az$ and $ab=ac$, which implies $y=z$ and $b=c$. Recent development of semigroup $C^*$-algebras by Xin Li allows us to study the $C^*$-algebras of left-cancellative semigroups. Here, we give a brief overview of Xin Li's construction of the universal semigroup $C^*$-algebras. 

\begin{definition} Let $P$ be a left-cancellative semigroup, and $I$ be a right ideal of $P$. Then for $p\in P$, define $pI=\{px: x\in I\}$ and $p^{-1}I=\{x: px\in I\}$. Then the set of constructible ideals $\cJ(P)$ is the smallest set of right ideals so that 
\begin{enumerate}
\item $\emptyset,P\in \cJ(P)$.
\item If $I\in \cJ(P)$, then $p\cdot I$ and $p^{-1} I\in \cJ(P)$. 
\item If $I,J\in \cJ(P)$, then $I\cap J\in \cJ(P)$. 
\end{enumerate}
\end{definition} 

In fact, one can explicitly describe $\cJ(P)$ as the set
\[\cJ(P)=\{q_n^{-1}p_n \cdots q_1^{-1} p_1 P: p_1,\cdots, p_n, q_1,\cdots, q_n \in P\}\cup\{\emptyset\}.\]

\begin{definition}\label{def.semigroupC} Let $P$ be a left-cancellative semigroup and $\cJ(P)$ be the set of its constructible ideals. Then the semigroup $C^*$-algebra $C^*(P)$ is the universal $C^*$-algebra generated by:
\begin{enumerate}
\item A set of isometries $\{v_p: p\in P\}$ where $v_p v_q=v_{pq}$ and $v_e=I$. 
\item A set of orthogonal projections $\{e_I: I\in\cJ(P)\}$ such that $e_P=I$ and $e_\emptyset=0$.
\item For any $p\in P$ and $I\in\cJ(P)$, 
\[v_p e_I v_p^* = e_{p I}, v_p^* e_I v_p = e_{p^{-1} I}.\]
\item For any $I,J\in \cJ(P)$, 
\[e_I e_J=e_{I\cap J}.\]
\end{enumerate}
\end{definition} 

The projection $e_I$ can be explicitly described by $v_p$. For an ideal \[I=q_n^{-1}p_n \cdots q_1^{-1} p_1 P,\]
we have
\[e_I=v_{q_n}^* v_{p_n} \cdots v_{q_1}^* v_{p_1} v_{p_1}^* v_{q_1} \cdots v_{p_n}^* v_{q_n}.\]
Moreover, the condition (4) is in fact redundant, as it follows directly from the other three conditions. For another ideal $J\in \cJ(P)$, we have \[I\cap J=q_n^{-1}p_n \cdots q_1^{-1} p_1 p_1^{-1} q_1 \cdots p_n^{-1} q_n J.\]

Now consider the semigroup $R\rtimes M$. We start by describing the set of constrictible ideals for $R\rtimes M$. 

\begin{definition} For a subset $N\subset R$ and $p\in M$. Define $p N=\{pn: n\in N\}$ and $p^{-1} N=\{n: pn\in N\}$. Define the set of constructible right ideals of $R$ from $M$ to be: 
\[\cJ_M(R)=\{q_n^{-1}p_n \cdots q_1^{-1} p_1 R: p_1,\cdots, p_n, q_1,\cdots, q_n \in M\}\cup\{\emptyset\}\]

Similar to the case of constructible right ideals of a semigroup, $\cJ_M(R)$ is the smallest set of right $R$-ideals that contains $\emptyset$ and $R$, and for any $I\in\cJ_M(R)$ and $p\in M$, $pI,p^{-1}I\in \cJ_M(R)$. 
\end{definition} 

We would like to point out that elements of $\cJ_M(R)$ are right ideals of the ring $R$, which is closed under right multiplication and addition. In many cases, ideals in $\cJ_M(R)$ have a much simpler form. For example, when $R$ is the ring of algebraic integers in a number field and $M=R^\times$, elements in $\cJ_M(R)$ can always be described in the form of a fractional ideal $q_1^{-1} p_1 R$. This phenomenon is also documented in the case of congruence monoids \cite{Bruce2020}. 

Elements of $\cJ_M(R)$ correspond to constructible right ideals $\cJ(M)$ of the left-cancellative semigroup $M$. For a constructible right ideal $I\in\cJ(M)$, where \[I=q_n^{-1}p_n \cdots q_1^{-1} p_1  M,\]
denote:
\[R_I=q_n^{-1}p_n \cdots q_1^{-1} p_1  R.\]
It is clear from the definition that for any $a\in M$,
\[a R_I=R_{a I}, a^{-1} R_I=R_{a^{-1} I}.\]
As a result, one can proceed almost verbatim as the proof of  \cite[Lemma 3.3]{XLi2012} to show the following Lemma.

\begin{lemma} For any $I,J\in\cJ(M)$, $R_I\cap R_J=R_{I\cap J}$. 
\end{lemma}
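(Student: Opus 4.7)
The plan is to reduce the lemma to a purely set-theoretic identity on subsets of $R$, and then prove that identity by induction on the length $n$ of the expression for $I$. Writing $I = q_n^{-1} p_n \cdots q_1^{-1} p_1 M$, the formula for $I \cap J$ recorded just before the lemma reads $I \cap J = q_n^{-1} p_n \cdots q_1^{-1} p_1 p_1^{-1} q_1 \cdots p_n^{-1} q_n J$. Since the assignment $I \mapsto R_I$ intertwines the operations $a\cdot$ and $a^{-1}\cdot$ (via $R_{aI} = aR_I$ and $R_{a^{-1}I} = a^{-1}R_I$), applying these operations to both sides yields
\[R_{I \cap J} = q_n^{-1} p_n \cdots q_1^{-1} p_1 p_1^{-1} q_1 \cdots p_n^{-1} q_n R_J.\]
The lemma will thus follow once I prove, for every subset $N \subseteq R$,
\[q_n^{-1} p_n \cdots q_1^{-1} p_1 p_1^{-1} q_1 \cdots p_n^{-1} q_n N \;=\; \bigl(q_n^{-1} p_n \cdots q_1^{-1} p_1 R\bigr) \cap N, \qquad (\star)\]
and then specialize to $N = R_J$.

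The first step will be to record four elementary identities for $a \in M$ and subsets $N, N' \subseteq R$: (i) $a(N \cap N') = aN \cap aN'$; (ii) $a^{-1}(N \cap N') = a^{-1}N \cap a^{-1}N'$; (iii) $a \cdot a^{-1}N = N \cap aR$; and (iv) $a^{-1} \cdot aN = N$. Identities (ii) and (iii) are immediate from the definitions, whereas (i) and (iv) are precisely where the standing assumption that $M$ acts left-cancellatively on $R$ enters. These are the set-theoretic analogues of the identities in $\cJ(M)$ underlying the formula for $I \cap J$.

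Next, I would prove $(\star)$ by induction on $n$. The base case $n = 0$ reduces to $N = R \cap N$. For the inductive step, applying the hypothesis to the inner block with argument $p_n^{-1}q_n N$ collapses the left-hand side of $(\star)$ to $q_n^{-1} p_n \bigl[Q \cap p_n^{-1} q_n N\bigr]$, where $Q = q_{n-1}^{-1} p_{n-1} \cdots q_1^{-1} p_1 R$. Using (i) to distribute $p_n$ over the intersection, then (iii) on the resulting factor $p_n \cdot p_n^{-1} q_n N$, then (ii) to distribute $q_n^{-1}$, and finally (iv) on $q_n^{-1} q_n N$ yields $(q_n^{-1} p_n Q) \cap N \cap (q_n^{-1} p_n R)$. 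Because $Q \subseteq R$ forces $q_n^{-1} p_n Q \subseteq q_n^{-1} p_n R$, the last factor is redundant, and the right-hand side of $(\star)$ emerges upon identifying $q_n^{-1} p_n Q$ with $q_n^{-1} p_n \cdots q_1^{-1} p_1 R = R_I$.

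The argument is essentially mechanical once $(\star)$ is in hand, and will mirror \cite[Lemma 3.3]{XLi2012} step by step, just as the paper indicates. The only point that requires genuine attention is verifying that left-cancellativity of the $M$-action on $R$, rather than cancellativity of $M$ alone, is the exact hypothesis needed for identities (i) and (iv); this is precisely the assumption imposed at the start of the section.
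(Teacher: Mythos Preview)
Your proposal is correct and follows exactly the approach the paper indicates: the paper gives no explicit proof, only the remark that one can proceed ``almost verbatim as the proof of \cite[Lemma~3.3]{XLi2012}'', and your identity $(\star)$ together with the inductive argument via (i)--(iv) is precisely that proof transported from subsets of $M$ to subsets of $R$, with left-cancellativity of the $M$-action on $R$ standing in for left-cancellativity of $M$. There is nothing to add.
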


\begin{lemma}\label{lm.RM.ideal} Let $I\in \cJ(R)$ and $x\in R$. For $a\in M$, we have
\[a (x+R_I) = ax+R_{aI};\]
and,
\[a^{-1}  (x+R_I) = \begin{cases} r+R_{a^{-1}I} &\mbox{ if there exists }w\in I, x+w=ar, \\
\emptyset &\mbox{ if } (x+R_I)\cap aR=\emptyset.
\end{cases}\]
\end{lemma}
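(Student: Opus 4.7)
Proof plan. The two identities are direct unpackings of the definitions of $a\cdot N$ and $a^{-1}\cdot N$ for subsets $N\subset R$, combined with the identity $a R_I=R_{aI}$ (respectively $a^{-1}R_I=R_{a^{-1}I}$) that follows immediately from the formula $R_I=q_n^{-1}p_n\cdots q_1^{-1}p_1 R$.

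For the first identity, I would simply compute
\[a(x+R_I)=\{a(x+y):y\in R_I\}=ax+aR_I=ax+R_{aI},\]
using that $R_I$ is stable under right multiplication by $a\in M$ in the sense that $aR_I=R_{aI}$.

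For the second identity, I would first dispose of the empty case: if $(x+R_I)\cap aR=\emptyset$ then no $n\in R$ satisfies $an\in x+R_I$, so $a^{-1}(x+R_I)=\emptyset$ by the very definition of $a^{-1}(\cdot)$. Otherwise, pick any $r\in R$ with $ar\in x+R_I$, equivalently $x+w=ar$ for some $w\in R_I$. To prove $a^{-1}(x+R_I)=r+R_{a^{-1}I}$, I would establish two containments. For $\supseteq$, take $n=r+s$ with $s\in R_{a^{-1}I}=a^{-1}R_I$, so $as\in R_I$; then $an=ar+as=x+w+as\in x+R_I$ since $w+as\in R_I$ (right ideals of $R$ are closed under addition). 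For $\subseteq$, suppose $an\in x+R_I$; then $a(n-r)=(an-x)-(ar-x)\in R_I$, and here the key step is to invoke the standing hypothesis that the action of $M$ on $R$ is left-cancellative, which lets me conclude $n-r\in a^{-1}R_I=R_{a^{-1}I}$, i.e.\ $n\in r+R_{a^{-1}I}$.

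The only non-formal point is that the coset representative is well-defined: any two choices of $r$ differ by an element $s$ with $as\in R_I$, hence by an element of $R_{a^{-1}I}$ (again using left-cancellativity), so the coset $r+R_{a^{-1}I}$ does not depend on the choice. Beyond this, the argument is bookkeeping; I do not foresee a genuine obstacle, since the lemma is really a concrete instance of how the constructible-ideal formalism interacts with the semidirect product structure of $R\rtimes M$, and left-cancellativity of $M$ on $R$ is precisely the hypothesis needed.
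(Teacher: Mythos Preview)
Your proof is correct and essentially identical to the paper's: both handle the empty case by definition, then fix a witness $ar=x+w$ and verify the two inclusions for $a^{-1}(x+R_I)=r+a^{-1}R_I$ in the same way. One small remark: the step you flag as needing left-cancellativity---deducing $n-r\in a^{-1}R_I$ from $a(n-r)\in R_I$---actually follows immediately from the \emph{definition} $a^{-1}R_I=\{y:ay\in R_I\}$, so no cancellativity is invoked at that point (nor in the paper's version).
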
 

\begin{proof} It is easy to check that $a (x+R_I) = ax+R_{aI}$. For $a^{-1} (x+R_I)=\{y\in R: ay\in x+R_I\}$: if $(x+R_I)\cap aR=\emptyset$, then it is impossible to find $y\in R$ so that $ay\in (x+R_I)\cap aR$. Therefore, $a^{-1} (x+R_I)=\emptyset$ in this case. Otherwise, there exists $w\in R_I$ and $r\in R$ so that $ar=x+w\in (x+R_I)\cap aR$. We claim that $a^{-1} (x+R_I)=r+a^{-1}\cdot R_I$. 

For any $y\in a^{-1}R_I$, $ay\in R_I$ so that 
\[a(r+y)=ar+ay=x+(w+ay)\in x+R_I.\]
Therefore, $r+a^{-1} R_I\subset a^{-1}\cdot (x+R_I)$. On the other hand, if $az\in x+R_I$, then $az-x=az-ar-w \in R_I$. Since $w\in R_I$, we have $a(z-r)\in R_I$ and thus $z-r\in a^{-1} R_I$ and $z\in r+a^{-1}R_I$. 
\end{proof}

We are now ready to characterize all constrictible right ideals of the semigroup $R\rtimes M$. 

\begin{proposition}\label{prop.RM.ideal} Let $\cJ(R)$ be the set of ideals in $R$. Then the set of constructible right ideals for $R\rtimes M$ can be described as 
\[\cJ(R\rtimes M)=\{(x+R_I)\times I: x\in R, I\in \cJ(M)\}.\]
\end{proposition}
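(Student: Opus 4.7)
The plan is to verify both inclusions of the equality. Let $\cS = \{(x + R_I) \times I : x \in R,\ I \in \cJ(M)\} \cup \{\emptyset\}$. On one hand, $\cS \subseteq \cJ(R\rtimes M)$ will follow by exhibiting each element of $\cS$ as a constructible ideal starting from $R \rtimes M$. On the other hand, $\cJ(R\rtimes M) \subseteq \cS$ will follow by induction on the construction of ideals, once we verify that $\cS$ is closed under the three defining operations (left multiplication, pre-image, binary intersection) and contains $R\rtimes M$ itself. The latter is immediate since $R\rtimes M = (0 + R_M) \times M$ with $R_M = R$.

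For closure of $\cS$ under the operations, suppose $(y, b) \in R \rtimes M$ and $(x + R_I) \times I \in \cS$. For left multiplication, a direct computation using Lemma \ref{lm.RM.ideal} gives
\[(y,b) \cdot \bigl((x+R_I) \times I\bigr) = \bigl(y + b(x+R_I)\bigr) \times bI = (y + bx + R_{bI}) \times bI,\]
which lies in $\cS$ since $bI \in \cJ(M)$. For pre-image, unpacking the definition shows that $(y,b)^{-1}\bigl((x+R_I)\times I\bigr)$ consists of those $(z,c)$ with $bz \in (x-y) + R_I$ and $c \in b^{-1}I$. By Lemma \ref{lm.RM.ideal}, the first condition defines either the empty set or a coset $r + R_{b^{-1}I}$, so the pre-image is either $\emptyset$ or $(r + R_{b^{-1}I}) \times b^{-1}I \in \cS$. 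For intersection, given another $(x' + R_{I'}) \times I' \in \cS$, we compute componentwise: the second coordinate is $I \cap I' \in \cJ(M)$, and for the first, if the coset intersection $(x+R_I) \cap (x' + R_{I'})$ is nonempty we pick any $z$ in it, and then Lemma (the preceding one stating $R_I \cap R_{I'} = R_{I \cap I'}$) yields
\[(x+R_I) \cap (x' + R_{I'}) = z + (R_I \cap R_{I'}) = z + R_{I \cap I'},\]
so the intersection is again in $\cS$.

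To show conversely that every element of $\cS$ is constructible, given $I = q_n^{-1}p_n \cdots q_1^{-1}p_1 M \in \cJ(M)$ I would build $R_I \times I$ step by step from $R\rtimes M$: left-multiplying by $(0, p_1)$ yields $p_1 R \times p_1 M$, and taking the pre-image under $(0, q_1)$ yields $R_{q_1^{-1}p_1 M} \times q_1^{-1}p_1 M$; iterating this alternation produces $R_I \times I$. Finally, left-multiplication by $(x, e)$ translates the first coordinate to obtain $(x + R_I) \times I$, and the empty set is in $\cJ(R \rtimes M)$ by definition. Combining the two inclusions gives the claim.

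The main obstacle I anticipate is in the intersection case: a priori $(x + R_I) \cap (x' + R_{I'})$ need not be a coset of an ideal in $\cJ(M)$, and it is precisely the identity $R_I \cap R_{I'} = R_{I \cap I'}$ (already established just before) that makes everything line up. Everything else reduces to direct computation with semidirect product multiplication combined with Lemma \ref{lm.RM.ideal}.
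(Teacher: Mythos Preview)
Your proof is correct and follows essentially the same two-inclusion strategy as the paper: explicitly construct each $(x+R_I)\times I$ from $R\rtimes M$ via alternating translations, and then show the candidate family is closed under the defining operations. The only difference is that you also verify closure under intersection, whereas the paper invokes the explicit description $\cJ(P)=\{q_n^{-1}p_n\cdots q_1^{-1}p_1 P\}\cup\{\emptyset\}$ stated earlier, which makes the intersection check unnecessary (closure under forward and backward translations alone suffices); your intersection argument is essentially the content of the paper's subsequent Lemma~\ref{lm.intersection}.
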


\begin{proof} For any ideal $I\in\cJ(M)$ where 
\[I=q_n^{-1}p_n \cdots q_1^{-1} p_1 M, p_i, q_i\in M,\]
the ideal $(x+R_I)\times I$ can be constructed by
\[(x,1)(0,q_n)^{-1} (0,p_n) \cdots (0,q_1)^{-1} (0,p_1) R\rtimes M.\]

Therefore, the right hand side is a subset of $\cJ(R\rtimes M)$. Because it clearly contains $\emptyset$ and $R\rtimes M$, it suffices to verify that it is closed under forward and backward translations. 

For any $(x+R_I)\times I$ and $(y,a)\in R\rtimes M$, one can easily verify that 
\[(y,a)\cdot (x+R_I)\times I=(y+ax+R_{aI})\times aI.\]
On the other hand,
\[(y,a)^{-1}\cdot (x+R_I)\times I=(0,a)^{-1}(x-y+R_I)\times I.\]
By Lemma \ref{lm.RM.ideal}, this is either $\emptyset$ if $x-y+R_I\cap aR=\emptyset$ or $(r+R_{a^{-1}I})\times a^{-1}I$ if there exists $w\in R_I, r\in R$ with $x-y+w=ar$. 
\end{proof} 

It is often useful to describe the intersection of two constructible ideals of $R\rtimes M$.

\begin{lemma}\label{lm.intersection} Let $I,J\in\cJ(M)$ and $x,y\in R$.

If $w\in (x+R_I)\cap (y+R_J)$, 
\[((x+R_I)\times I) \cap ((y+R_J)\times J) = (w+R_{I\cap J})\times I\cap J.\]

If $(x+R_I)\cap (y+R_J)=\emptyset$, 
\[((x+R_I)\times I) \cap ((y+R_J)\times J)=\emptyset.\]
\end{lemma}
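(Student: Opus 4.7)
The plan is to dispatch the two cases by direct set-theoretic manipulation, with the main input being the previous lemma's identity $R_I \cap R_J = R_{I \cap J}$. Since an element of $R \rtimes M$ is a pair $(z,a)$, the intersection of $(x+R_I) \times I$ and $(y+R_J) \times J$ splits into the intersection of first coordinates (inside $R$) and second coordinates (inside $M$), so everything reduces to understanding $(x+R_I) \cap (y+R_J)$ in $R$.

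The second case is immediate: if a pair $(z,a)$ lay in both product sets, its first coordinate $z$ would lie in $(x+R_I) \cap (y+R_J)$, contradicting emptiness; hence the intersection of the product sets is empty.

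For the first case, I fix the witness $w \in (x+R_I) \cap (y+R_J)$ and write $w = x + r_1 = y + r_2$ with $r_1 \in R_I$ and $r_2 \in R_J$. I then claim
\[(x+R_I) \cap (y+R_J) = w + R_{I \cap J}.\]
The inclusion $\supseteq$ follows because for $t \in R_{I\cap J} \subseteq R_I \cap R_J$, we have $w + t = x + (r_1 + t) \in x + R_I$ and $w + t = y + (r_2 + t) \in y + R_J$. The inclusion $\subseteq$ follows because if $z$ lies in both cosets, then $z - w \in R_I$ (comparing $z, w \in x + R_I$) and likewise $z - w \in R_J$, so $z - w \in R_I \cap R_J = R_{I\cap J}$ by the previous lemma.

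Finally, combining this with the second-coordinate intersection, which is just $I \cap J$ by definition of the product set, gives
\[((x+R_I)\times I) \cap ((y+R_J)\times J) = \bigl((x+R_I)\cap (y+R_J)\bigr) \times (I \cap J) = (w + R_{I\cap J}) \times (I \cap J),\]
as required. There is no real obstacle beyond correctly invoking $R_I \cap R_J = R_{I \cap J}$; the only mild care needed is noting that the form $(w + R_{I\cap J}) \times (I\cap J)$ on the right-hand side is exactly the shape asserted, since $I \cap J \in \cJ(M)$.
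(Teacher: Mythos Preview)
Your proof is correct and follows essentially the same approach as the paper: both reduce to showing $(x+R_I)\cap(y+R_J)=w+R_{I\cap J}$ via the identity $R_I\cap R_J=R_{I\cap J}$, with the two inclusions established exactly as you describe. Your version is slightly more explicit about the product-set splitting into first and second coordinates, but the argument is otherwise identical.
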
 

\begin{proof} It is clear that when $(x+R_I)\cap (y+R_J)=\emptyset$, $((x+R_I)\times I) \cap ((y+R_J)\times J)=\emptyset$. If there exists $w\in (x+R_I)\cap (y+R_J)$, it suffices to prove that $(x+R_I)\cap (y+R_J)=w+R_{I\cap J}$. Let $w=x+s=y+t$ for $s\in R_I$ and $t\in R_J$. For any $z\in (x+R_I)\cap (y+R_J)$,
\[z-w=(z-x)-s=(z-y)-t \in R_I\cap R_J=R_{I\cap J}.\]
Conversely, for any $v\in R_{I\cap J}=R_I\cap R_J$, $w+v-x=s+v\in R_I$ and $w+v-y=t+v\in R_J$, and thus $w+v\in (x+R_I)\cap (y+R_J)$.
\end{proof} 

For two right ideals $M,N$ of $R$, define $M+N=\{x+y: x\in I, y\in J\}$, which is the smallest right ideal containing both $M$ and $N$. 
We always assume that the family $\{R_I: I\in \cJ(M)\}$ is closed under such operation: for any $I,J\in \cJ(M)$, $R_I+R_J=R_K$ for some element $K\in\cJ(M)$ where $I,J\subset K$. This puts a semi-$\vee$ lattice structure on $\cJ(M)$. We are not sure if this property follows immediately from the definition. Nevertheless, all the examples we considered satisfy this property, and this property is assumed throughout the rest of this paper. 

\begin{proposition}\label{prop.RM.universal} The universal semigroup $C^*$-algebra for $R\rtimes M$ is also the universal $C^*$-algebra generated by $\{u^x\}_{x\in R}$, $\{s_a\}_{a\in M}$, and $\{e_I\}_{I\in \cJ(M)}$, such that
\begin{enumerate}
\item The $u^x$ are unitary and $u^x u^y=u^{x+y}$. The $s_a$ are isometries, and $s_as_b=s_{ab}$. Moreover, $s_a u^x = u^{ax} s_a$ for all $x\in R$ and $a\in M$. 
\item $e_I$ are orthogonal projections and $e_Ie_J=e_{I\cap J}$, $e_M=I$. 
\item $s_a e_I s_a^* = e_{aI}$. 
\item For $x\in R_I$, $u^x e_I=e_I u^x$; for $x\notin R_I$, $e_I u^x e_I=0$.  
\end{enumerate}
\end{proposition}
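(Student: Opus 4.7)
The plan is to prove the proposition by showing that Xin Li's $C^*(R\rtimes M)$ and the new universal $C^*$-algebra defined by generators $\{u^x\}, \{s_a\}, \{e_I\}$ and relations (1)--(4) have the same universal property. This amounts to two verifications: the canonical generators of $C^*(R\rtimes M)$ satisfy (1)--(4), and conversely, any $C^*$-algebra realizing (1)--(4) admits elements satisfying Xin Li's defining relations from Definition \ref{def.semigroupC}.

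For the first direction, set $u^x := v_{(x,1)}$, $s_a := v_{(0,a)}$, and $e_I := e_{R_I\times I}$ in $C^*(R\rtimes M)$. Condition (1) follows from invertibility of $(x,1)$ in $R\rtimes M$ (giving unitarity of $u^x$) and the semigroup identity $(0,a)(x,1) = (ax,1)(0,a)$. Condition (2) uses Lemma \ref{lm.intersection}: since $0 \in R_I\cap R_J$, we have $(R_I\times I)\cap(R_J\times J) = R_{I\cap J}\times(I\cap J)$. Condition (3) follows from $(0,a)\cdot(R_I\times I) = R_{aI}\times aI$. For (4): when $x \in R_I$, the cosets $x+R_I$ and $R_I$ coincide, so $v_{(x,1)}e_I v_{(x,1)}^* = e_I$; when $x \notin R_I$, the additive subgroup property of $R_I$ forces $R_I\cap(x+R_I) = \emptyset$, so $e_I\cdot e_{(x+R_I)\times I} = 0$, whence $e_I v_{(x,1)}e_I = 0$.

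For the converse, given a $C^*$-algebra $B$ satisfying (1)--(4), define $\tilde{v}_{(x,a)} := u^x s_a$ and, using Proposition \ref{prop.RM.ideal}, set $\tilde{e}_{(y+R_I)\times I} := u^y e_I(u^y)^*$ (with $\tilde{e}_\emptyset := 0$). Well-definedness uses (4): if $y_1 - y_2 \in R_I$, then $u^{y_1-y_2}$ commutes with $e_I$. Verifying Xin Li's relations reduces to: $\tilde{v}$ being an isometric semigroup representation (routine from (1)); the forward translation $\tilde{v}_{(x,a)}\tilde{e}_J\tilde{v}_{(x,a)}^* = \tilde{e}_{(x,a)\cdot J}$, which collapses via (1) and (3) to $u^{x+ay}e_{aI}u^{-(x+ay)}$; and the backward translation $\tilde{v}_{(x,a)}^*\tilde{e}_J\tilde{v}_{(x,a)} = \tilde{e}_{(x,a)^{-1}\cdot J}$. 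A key intermediate identity is $s_a^* e_I s_a = e_{a^{-1}I}$, derivable from (3) by first noting $s_a s_a^* = e_{aM}$ (case $I = M$ of (3)) and $s_a e_{a^{-1}I}s_a^* = e_{aM\cap I} = e_{aM}e_I$, then sandwiching. The intersection relation $\tilde{e}_{J_1}\tilde{e}_{J_2} = \tilde{e}_{J_1\cap J_2}$ is redundant per the remark after Definition \ref{def.semigroupC}.

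The main obstacle is the vanishing case of the backward translation: when $(y-x+R_I)\cap aR = \emptyset$, one must show $s_a^* u^{y-x}e_I u^{x-y}s_a = 0$. Since this expression equals $(e_I u^{x-y}s_a)^*(e_I u^{x-y}s_a)$ and $e_I u^{x-y}s_a = e_I u^{x-y}e_{aM}s_a$, it suffices to prove $e_I u^z e_{aM} = 0$ for $z = x - y$. To do this I would invoke the paper's running assumption that $\{R_I : I \in \cJ(M)\}$ is closed under sums: choose $K \in \cJ(M)$ with $I, aM \subset K$ and $R_K = R_I + aR$. Then (2) gives $e_I, e_{aM} \leq e_K$. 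The hypothesis translates, using that $R_I$ and $aR$ are additive subgroups, to $z \notin R_I + aR = R_K$; condition (4) then yields $e_K u^z e_K = 0$, whence $e_I u^z e_{aM} = e_I e_K u^z e_K e_{aM} = 0$. The non-vanishing case uses the decomposition $y - x = ar - w$ with $w \in R_I, r \in R$: condition (4) lets $u^{\pm w}$ commute past $e_I$, reducing the expression to $s_a^* u^{ar}e_I u^{-ar}s_a$, and then $s_a^* u^{ar} = u^r s_a^*$ (from (1)) combined with $s_a^* e_I s_a = e_{a^{-1}I}$ produces $u^r e_{a^{-1}I}u^{-r} = \tilde{e}_{(r+R_{a^{-1}I})\times a^{-1}I}$, as required. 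Combining the two directions yields the isomorphism by mutual universality.
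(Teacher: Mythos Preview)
Your proposal is correct and follows essentially the same route as the paper: both directions match the paper's argument (define $u^x = v_{(x,1)}$, $s_a = v_{(0,a)}$, $e_I = E_{R_I\times I}$ one way; $\tilde v_{(x,a)} = u^x s_a$, $\tilde e_{(y+R_I)\times I} = u^y e_I u^{-y}$ the other), and the delicate vanishing case of the backward translation is handled identically via the closure-under-sums assumption and condition~(4). The only organizational differences are that you invoke the redundancy of Li's intersection relation (noted after Definition~\ref{def.semigroupC}) rather than verifying it directly as the paper does, and you isolate the identity $s_a^* e_I s_a = e_{a^{-1}I}$ as a lemma whereas the paper computes inline; neither changes the substance of the argument.
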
 

\begin{proof} Let $\{v_{(x,a)}, E_J\}$ be a set of generators for the semigroup $C^*$-algebra. Define $u^x=v_{(x,1)}$ and $s_a=v_{(0,a)}$. Since $v_{(x,a)}v_{(y,b)}=v_{(x+ay,ab)}$, one can easily verify that $\{u^x, s_a\}$ satisfies condition (1). For each $I\in \cJ(M)$, define $e_I=E_{R_I\times I}$. Here, $R_I\times  I\in\cJ(R\rtimes M)$ by Proposition \ref{prop.RM.ideal}. It is clear that $e_I e_J=e_{I\cap J}$ so that condition (2) is satisfied. For (3), 
\[s_a e_I s_a^* = v_{(0,a)} E_{R_I\times I} v_{(0,a)^*} = E_{R_{aI}\times aI}=e_{aI}.\]
Finally, if $x\in R_I$, $x+R_I=R_I$ and thus 
\[u^x e_I u^{-x}=u^x E_{R_I\times I} u^{-x}= E_{R_I\times I} = e_I.\]
If $x\notin R_I$, $(x+R_I)\cap R_I=\emptyset$. Therefore,
\[e_I u^x e_I u^{-x} = E_{R_I \times I} E_{(x+R_I)\times I}=0.\]
Therefore, $\{u^x\}_{x\in R}$, $\{s_a\}_{a\in M}$, and $\{e_I\}_{I\in \cJ(M)}$ satisfy conditions (1) - (4). 

For the converse, given a family $\{u^x\}_{x\in R}$, $\{s_a\}_{a\in M}$, and $\{e_I\}_{I\in \cJ(M)}$ that satisfies conditions (1) - (4), define $v_{(x,a)}=u^x s_a$ and 
$E_{(x+R_I)\times I}=u^x e_I u^{-x}$. Since $s_a$ is isometric and $u^x$ is unitary, $v_{(x,a)}$ is an isometry. Moreover, for any $(x,a), (y,b)$, 
\[v_{(x,a)}v_{(y,b)}=u^x s_a u^y s_b = u^x u^{ay} s_as_b=u^{x+ay}s_{ab}=v_{(x,a)(y,b)}.\]
Each $E_{(x+R_I)\times I}$ is clearly an orthogonal projection. For any $x,y\in R$ and $I,J\in\cJ(M)$, 
\[E_{(x+R_I)\times I} E_{(y+R_J)\times J}=u^x e_I u^{-x+y} e_J u^{-y}.\]
If $(x+R_I)\cap (y+R_J)=\emptyset$, $-x+y\notin R_I+R_J$. Since we assumed that $R_I+R_J=R_K$ for some $K\in\cJ(M)$ with $I,J\subset K$, we have $e_K u^{-x+y} e_K=0$ by condition (4), and thus \[ e_I u^{-x+y} e_J=e_I e_K u^{-x+y} e_K e_J=0.\]
If $w\in (x+R_I)\cap (y+R_J)$, we have $w=x+s=y+t$ for $s\in R_I$ and $t\in R_J$. Therefore, $-x+y=s-t$ and we have 
\[u^x e_I u^{-x+y} e_J u^{-y}=u^x e_I u^s u^{-t} e_J u^y = u^{x+s} e_Ie_J u^{-y-t}=u^w e_{I\cap J} u^{-w}.\]
Therefore, by Lemma \ref{lm.intersection}, \[E_{(x+R_I)\times I} E_{(y+R_J)\times J}=E_{(w+R_{I\cap J})\times I\cap J}=E_{((x+R_I)\times I) \cap ((y+R_J)\times J)}.\]
Finally, for any ideal $(x+R_I)\cap I\in \cJ(R\rtimes M)$ and $(y,a)\in R\rtimes M$, 
\begin{align*}
v_{(y,a)} E_{(x+R_I)\cap I} v_{(y,a)}^* &= u^y s_a u^x e_I u^{-x} s_a^* u^{-y} \\
&= u^{y+ax} s_a e_I  s_a^* u^{-(y+ax)} \\
&= u^{y+ax} e_{aI} u^{-(y+ax)} \\
&= E_{(y+ax+R_{aI})\times aI} = E_{(y,a) \cdot (x+R_I)\cap I}.
\end{align*}
On the other hand,
\[
v_{(y,a)}^* E_{(x+R_I)\cap I} v_{(y,a)} = s_a^* u^{-y+x} e_I u^{-x+y}  s_a. \]
If there exists $w\in R_I$ and $r\in R$ with $x-y+w=ar$, we have
\begin{align*}
s_a^* u^{-y+x} e_I u^{-x+y}  s_a  &= s_a^* u^{ar} u^{-w} e_I u^w u^{-ar} s_a \\
&= u^r s_a^* u^{-w} u^w e_I s_a u^{-r} \\
&= u^r s_a^* e_I s_a u^{-r} \\
&= u^r e_{a^{-1}I} u^{-r} \\
&= E_{(r+R_{a^{-1}I})\times a^{-1}I} = E_{(y,a)^{-1} \cdot (x+R_I)\cap I}.
\end{align*}
Otherwise, if $x-y+R_I\cap aR=\emptyset$, we have $(y,a)^{-1} \cdot (x+R_I)\cap I=\emptyset$. In this case, $x-y\notin aR+R_I$, where, $aR+R_I=R_K$ for some $K\in\cJ(M)$ with $aM, I\subset K$. Therefore, 
\[e_{aM} u^{x-y} e_I = e_{aM} e_K u^{x-y} e_K e_I =0.\]
By condition (3), $e_{aM}=s_a s_a^*$. Hence, $s_a^* u^{-y+x} e_I=0$ so that \[v_{(y,a)}^* E_{(x+R_I)\cap I} v_{(y,a)}=0.\]
Therefore, $\{v_{(x,a)}, E_{(x+R_I)\times I}\}$ satisfies the conditions for the generators of the semigroup $C^*$-algebra $C^*(R\rtimes M)$. 
\end{proof} 

We now construct a semigroup dynamical system $(\cA_{R,M}, M, \alpha)$, where:
\begin{enumerate}
\item $\cA_{R,M}$ is the $C^*$-subalgebra of $C^*(R\rtimes M)$ generated by $\{u^x, e_I: x\in R, I\in \cJ(M)\}$.
\item $\alpha_a:\cA_{R,M}\to \cA_{R,M}$ defined by 
\[\alpha_a(u^x e_I u^y)=u^{ax} e_{aI} u^{ay}.\]
In particular, $\alpha_a(u^x)=u^{ax} e_{aM}$ and $\alpha_a(e_I)=e_{aI}$. 
\end{enumerate}

\begin{lemma} For any $x\in R$ and $I,J\in \cJ(M)$, 
\[ e_I u^x e_J =\begin{cases} u^{x_1} e_{I\cap J} u^{x_2}, &\mbox{ if } x=x_1+x_2, x_1\in R_I, x_2\in R_J; \\
0, &\mbox{ if } x\notin R_I+R_J. \\
\end{cases}\]
\end{lemma}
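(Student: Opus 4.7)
The plan is to reduce the lemma to a direct application of the four relations in Proposition \ref{prop.RM.universal}, splitting into the two cases stated.

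For the first case, suppose $x=x_1+x_2$ with $x_1\in R_I$ and $x_2\in R_J$. Since the $u^y$ form a unitary representation of $R$, we have $u^x=u^{x_1}u^{x_2}$. Property (4) of Proposition \ref{prop.RM.universal} gives $u^{x_1}e_I=e_I u^{x_1}$ because $x_1\in R_I$, and similarly $u^{x_2}e_J=e_J u^{x_2}$ because $x_2\in R_J$. Commuting $u^{x_1}$ past $e_I$ to the left and $u^{x_2}$ past $e_J$ to the right, and then applying property (2), yields
\[e_I u^x e_J = e_I u^{x_1}u^{x_2} e_J = u^{x_1} e_I e_J u^{x_2} = u^{x_1} e_{I\cap J} u^{x_2}.\]
A short remark should be included to note that this expression is independent of the chosen decomposition: if $x=x_1+x_2=x_1'+x_2'$ are two such decompositions, then $z:=x_1-x_1'=x_2'-x_2\in R_I\cap R_J=R_{I\cap J}$, so property (4) gives $u^z e_{I\cap J}=e_{I\cap J}u^z$, and the two expressions agree.

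For the second case, suppose $x\notin R_I+R_J$. Here we invoke the standing assumption (stated just before Proposition \ref{prop.RM.universal}) that $R_I+R_J=R_K$ for some $K\in\cJ(M)$ with $I,J\subset K$. Since $I\subset K$ we have $I\cap K=I$, and likewise $J\cap K=J$, so property (2) gives $e_I e_K=e_I$ and $e_K e_J=e_J$. Therefore
\[e_I u^x e_J = e_I e_K u^x e_K e_J.\]
Because $x\notin R_K$, the final clause of property (4) gives $e_K u^x e_K=0$, and the whole product collapses to zero.

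I do not anticipate a genuine obstacle here; the only subtlety is the well-definedness remark in the first case, and the only nontrivial input in the second case is the lattice assumption on $\{R_I\}$ that lets us pass to a common enveloping ideal $R_K$ on which the second half of property (4) can be applied.
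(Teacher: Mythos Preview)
Your proof is correct and follows essentially the same route as the paper: in the first case you split $u^x=u^{x_1}u^{x_2}$ and commute using property~(4) of Proposition~\ref{prop.RM.universal} before collapsing $e_Ie_J=e_{I\cap J}$, and in the second case you insert the enveloping projection $e_K$ on both sides via the standing lattice assumption and kill the middle with property~(4). The well-definedness remark you include is a helpful addition not present in the paper's version.
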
 

\begin{proof}
Let $I,J\in \cJ(M)$, we assumed $R_I+R_J=R_K$ for some $K\in\cJ(M)$ with $I,J\subset K$. For any $x\in R$: if $x\notin R_I+R_J$, then 
\[e_I u^x e_J = e_I e_{K} u^x e_{K} e_J = 0;\]
if $x\in R_I+R_J$, write $x=x_1+x_2$ with $x_1\in R_I$ and $x_2\in R_J$. We have:
\[e_I u^x e_J = e_I u^{x_1} u^{x_2} e_J = u^{x_1} e_{I\cap J} u^{x_2}.\qedhere\]
\end{proof}

\begin{lemma}\label{lm.ARspan} The $\cA_{R,M}$ can be described as: 
\[\cA_{R,M}=\overline{\lspan}\{u^x e_I u^y: x,y\in R, I\in \cJ(M)\}.\]
\end{lemma}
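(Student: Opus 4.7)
The plan is to set $\cB=\overline{\lspan}\{u^x e_I u^y: x,y\in R,\ I\in \cJ(M)\}$ and show two inclusions. The inclusion $\cB\subset \cA_{R,M}$ is immediate, since each spanning element $u^x e_I u^y$ is a finite product of generators of $\cA_{R,M}$, and $\cA_{R,M}$ is norm-closed. For the reverse inclusion, my strategy is to verify that $\cB$ is a norm-closed self-adjoint subalgebra of $C^*(R\rtimes M)$ that contains the generating set $\{u^x, e_I : x\in R,\ I\in\cJ(M)\}$; minimality of $\cA_{R,M}$ among such subalgebras then forces $\cA_{R,M}\subset \cB$.

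Containment of generators is simply $u^x = u^x e_M u^0$ (using $e_M=I$ and $u^0=I$) and $e_I = u^0 e_I u^0$. Closure under the adjoint follows from $e_I^*=e_I$ and $(u^x)^*=u^{-x}$, giving $(u^x e_I u^y)^* = u^{-y} e_I u^{-x}\in \cB$. The main content is closure under multiplication: for two spanning elements, I compute
\[
(u^x e_I u^y)(u^{x'} e_{J} u^{y'}) = u^{x}\,\bigl(e_I u^{y+x'} e_{J}\bigr)\, u^{y'},
\]
using that the $u^z$ form a one-parameter unitary group. The bracketed middle factor is exactly the object controlled by the lemma immediately preceding the statement: it equals $0$ if $y+x'\notin R_I+R_J$, and otherwise, writing $y+x'=z_1+z_2$ with $z_1\in R_I,\ z_2\in R_J$, it equals $u^{z_1}e_{I\cap J}u^{z_2}$. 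Substituting and collapsing the unitaries again gives
\[
(u^x e_I u^y)(u^{x'} e_{J} u^{y'}) = u^{x+z_1}\, e_{I\cap J}\, u^{z_2+y'}\in\cB,
\]
which is (up to the zero case) another spanning element. By bilinearity and norm continuity, $\cB$ is closed under multiplication.

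The only non-routine step is the product calculation, and even that is handled entirely by the preceding lemma on $e_I u^x e_J$; everything else is bookkeeping using the defining relations of Proposition~\ref{prop.RM.universal}. I do not anticipate any genuine obstacle, since the lemma before this one was essentially set up exactly to make this closure argument work. Combining both inclusions yields $\cB=\cA_{R,M}$.
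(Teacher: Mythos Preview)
Your proof is correct and follows essentially the same approach as the paper: the paper simply asserts ``it suffices to prove that the right hand side is an algebra'' and then carries out exactly the product computation you give, invoking the preceding lemma on $e_I u^x e_J$ in the same way. You have merely spelled out the details (containment of generators, closure under adjoints, the two inclusions) that the paper leaves implicit in that one sentence.
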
 

\begin{proof} It suffices to prove that the right hand side is an algebra. For any $x,y,w,z\in R$ and $I,J\in\cJ(M)$, consider the product of two elements  $u^x e_I u^y$ and $u^w e_J u^z$:
\[u^x e_I u^y u^w e_J u^z=u^x e_I u^{y+w} e_J u^z.\]
Here, $e_I u^{y+w} e_J=0$ if $y+w\notin R_I+R_J$, and $e_I u^{y+w} e_J=u^c e_{I\cap J} u^d$ if $y+w=c+d$ for some $c\in R_I$ and $d\in R_J$. Therefore, the product is either $0$ or $u^{x+c} e_{I\cap J} u^{d+z}$, and thus the linear span is an algebra.
\end{proof} 

Now, we show that $(\cA_{R,M},M,\alpha)$ is an injective semigroup dynamical system where each $\alpha_a(\cA_{R,M})$ is hereditary and its projections are aligned, so that it fits into the framework of our analysis.

\begin{lemma}\label{lm.alphaa.endo} For each $a\in R^\times$, $\alpha_a$ is an injective $\ast$-endomorphism on $\cA_{R,M}$, and its range $\alpha_a(\cA_{R,M})$ is hereditary, and projections $\{\alpha_a(1)\}$ are commuting. 
\end{lemma}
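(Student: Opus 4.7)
The strategy is to realize $\alpha_a$ as conjugation by the isometry $s_a$ and read off all four properties from this realization. Using the relations $s_a u^x = u^{ax} s_a$ and $s_a e_I s_a^* = e_{aI}$ from Proposition \ref{prop.RM.universal} (the second giving $s_a e_I = e_{aI} s_a$), one computes on the spanning elements of Lemma \ref{lm.ARspan}:
\[s_a (u^x e_I u^y) s_a^* = u^{ax} e_{aI} (s_a u^y) s_a^* = u^{ax} e_{aI} u^{ay} e_{aM} = u^{ax} e_{aI} u^{ay},\]
where the last equality uses $e_{aI} \leq e_{aM}$ together with the fact that $u^{ay}$ commutes with $e_{aM}$ since $ay \in aR = R_{aM}$. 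Thus $\alpha_a = \mathrm{Ad}(s_a)|_{\cA_{R,M}}$, which is automatically multiplicative because $s_a^* s_a = I$; the displayed formula also confirms that $\alpha_a$ carries $\cA_{R,M}$ into itself, and $*$-linearity is clear. This removes any well-definedness issue in the original ad-hoc definition of $\alpha_a$.

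Injectivity and the commuting projections are immediate corollaries. Since $s_a^* s_a = I$, one has $s_a^* \alpha_a(x) s_a = x$ for every $x \in \cA_{R,M}$, so $\mathrm{Ad}(s_a^*)$ is a left inverse of $\alpha_a$ and $\alpha_a$ is injective. The projection $\alpha_a(1) = s_a s_a^* = e_{aM}$ by relation (3) of Proposition \ref{prop.RM.universal}, and for $a, b \in M$ relation (2) gives $e_{aM} e_{bM} = e_{aM \cap bM} = e_{bM} e_{aM}$, so the family $\{\alpha_a(1)\}_{a \in M}$ consists of commuting orthogonal projections.

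For hereditariness of the range, I invoke Proposition \ref{prop.hereditary} and establish the equality $\alpha_a(\cA_{R,M}) = e_{aM} \cA_{R,M} e_{aM}$. The forward inclusion is direct from the formula $\alpha_a(u^x e_I u^y) = u^{ax} e_{aI} u^{ay}$: since $ax, ay \in aR = R_{aM}$ commute with $e_{aM}$ and $e_{aI} e_{aM} = e_{aI}$, the element already lies in $e_{aM} \cA_{R,M} e_{aM}$. For the reverse inclusion, start from a typical sandwiched generator $e_{aM} u^x e_I u^y e_{aM}$ and apply the lemma preceding Lemma \ref{lm.ARspan} twice to reduce it either to zero or to an element of the form $u^{aw_1} e_{aM \cap I} u^{aw_2}$ with $w_1, w_2 \in R$. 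The decisive observation is that $aM \cap I = a(a^{-1} I)$, and $a^{-1} I \in \cJ(M)$ by closure of $\cJ(M)$ under backward translation; therefore $e_{aM \cap I} = \alpha_a(e_{a^{-1} I})$, and the sandwiched generator equals $\alpha_a(u^{w_1} e_{a^{-1} I} u^{w_2}) \in \alpha_a(\cA_{R,M})$.

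The main obstacle is the bookkeeping in the reverse inclusion: repeatedly applying the zero/nonzero criterion for products $e_J u^z e_K$ and ensuring that the surviving exponents lie in $aR$ so that one can extract a genuine preimage inside $\cA_{R,M}$. The two ingredients that make this succeed are precisely the closure of $\cJ(M)$ under the contraction $I \mapsto a^{-1} I$ and the sum-closure assumption $R_I + R_J = R_K$ that was used to derive the zero-criterion in the first place; these are exactly the structural hypotheses imposed earlier, and they match up cleanly with the reduction above.
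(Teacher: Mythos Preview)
Your proof is correct and takes a genuinely different route for the first two properties. The paper verifies multiplicativity of $\alpha_a$ by a direct case analysis on the product $(u^x e_I u^y)(u^w e_J u^z)$, checking that $\alpha_a$ respects the dichotomy $y+w\in R_I+R_J$ versus $y+w\notin R_I+R_J$; it then proves injectivity at the very end by writing down an explicit left inverse on the spanning set. Your realization $\alpha_a=\mathrm{Ad}(s_a)\big|_{\cA_{R,M}}$ short-circuits both of these: multiplicativity is automatic from $s_a^*s_a=I$, and injectivity is immediate since $\mathrm{Ad}(s_a^*)$ is a two-sided inverse on the range. This also resolves, as you note, any potential well-definedness issue in the ad-hoc prescription $\alpha_a(u^x e_I u^y)=u^{ax}e_{aI}u^{ay}$, which the paper does not address. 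The cost is that your argument depends on the ambient $C^*$-algebra $C^*(R\rtimes M)$ and the relations of Proposition~\ref{prop.RM.universal}, whereas the paper's computation is purely internal to $\cA_{R,M}$; but since $\cA_{R,M}$ is defined as a subalgebra of $C^*(R\rtimes M)$ anyway, this is no loss.

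For hereditariness and the commuting projections, your argument is essentially the same as the paper's: both establish $\alpha_a(\cA_{R,M})=\alpha_a(1)\cA_{R,M}\alpha_a(1)$ by sandwiching a generator between copies of $e_{aM}$ and reducing via the zero/nonzero criterion for $e_J u^z e_K$, and both finish by observing that the surviving index $aM\cap I$ equals $a(a^{-1}I)$ with $a^{-1}I\in\cJ(M)$. Your bookkeeping is slightly cleaner, and your use of $e_{aM}$ rather than the paper's $e_{aR}$ is more consistent with the indexing convention $e_I$, $I\in\cJ(M)$.
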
 

\begin{proof} We first prove that $\alpha_a$ is multiplicative: for any $x,y,w,z\in R$ and $I,J\in\cJ(M)$, Lemma \ref{lm.ARspan} proves that 
\[u^x e_I u^y u^wa e_J u^z = \begin{cases} u^{x+c} e_{I\cap J} u^{d+z} &\mbox{ if } y+w=c+d, c\in R_I, d\in R_J;  \\
0 &\mbox{ if } y+w\notin R_I+R_J.
\end{cases}\]
By definition, $\alpha_a(u^x e_I u^y)=u^{ax} e_{aI} u^{ay}$ and $\alpha_a(u^w e_J u^z)=u^{aw} e_{aJ} u^{az}$. We have $y+w\in R_I+R_J$ if and only if $ay+aw\in a(R_{I}+R_{J})=R_{aI}+R_{aJ}$. Moreover, if $y+w=c+d\in R_I+R_J$ with $c\in R_I, d\in R_J$, we also have $ay+aw=ac+ad\in R_{aI}+R_{aJ}$ with $ac\in R_{aI}, ad\in R_{aJ}$. Therefore, one can check \[\alpha_a(u^x e_I u^y u^wa e_J u^z)=\alpha_p(u^x e_I u^y)\alpha_a(u^w e_J u^z).\] 

To see $\alpha_a(\cA_{R,M})$ is hereditary: it is clear that $\alpha_a(\cA_{R,M})\subset \alpha_a(1)\cA\alpha_a(1)$. For the other inclusion, for any generator $u^x e_I u^y\in \cA_R$, consider
\[\alpha_a(1) u^x e_I u^y \alpha_a(1)=e_{aR} u^x e_I u^y e_{aR}.\]
If $x\notin aR+R_I$ or $y\notin R_I+aR$, this becomes $0$. Otherwise, let $x=aw+c$ and $y=d+az$ for some $c,d\in R_I$. We have,
\[e_{aR} u^x e_I u^y e_{aR}=u^{aw} e_{aR\cap I} u^{c+d} e_{aR\cap I} u^{az}.\]
Now, if $c+d\notin aR\cap R_I$, $e_{aR\cap I} u^{c+d} e_{aR\cap I}=0$. Otherwise, $c+d=at$ for some $t\in R$ and $at\in I$. In this case, it becomes
\[u^{a(w+t)} e_{aR\cap I} u^{az} = \alpha_a(u^{w+t} e_{a^{-1}(aR\cap I)} u^{z})\in\alpha_a(\cA_{R,M}).\]
Here, $a^{-1}(aR\cap I)=a^{-1}I$ is also in $\cJ(M)$. Therefore, $\alpha_a(1)\cA_{R,M}\alpha_a(1)=\alpha_a(\cA_{R,M})$, and it is hereditary by Proposition \ref{prop.hereditary}. The projection $\alpha_a(1)=e_{aR}$ which are commuting projections since $e_{aR}e_{bR}=e_{aR\cap bR}$. 

To see $\alpha_a$ is injective, we saw that
\[\alpha_a(\cA_{R,M})=\alpha_a(1)\cA_{R,M}\alpha_a(1)=\lspan\{u^{ax} e_{aI} u^{ay}: x,y\in R, I\in \cJ(M)\}.\]
Therefore, $\alpha_a$ has a left-inverse $\alpha_a^{-1}:\alpha_a(\cA_{R,M})\to\cA_{R,M}$ by mapping $u^{ax} e_{aI} u^{ay}$ to $u^x e_I u^y$, and hence $\alpha_a$ is injective. 
\end{proof} 

For the dynamical system  $(\cA_{R,M},M,\alpha)$, we defined $\cA_{R,M}\rtimes_\alpha^* M$ to be the universal $C^*$-algebra with respect to isometric covariant representations of this semigroup dynamical system. We first show that it coincides with the semigroup $C^*$-algebra for $R\rtimes M$.

\begin{proposition}\label{prop.iso1} The $\cA_{R,M}\rtimes_\alpha^* M$ is isomorphic to $C^*(R\rtimes M)$. 
\end{proposition}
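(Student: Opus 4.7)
The plan is to construct mutually inverse $*$-homomorphisms between the two algebras by exploiting the universal properties of both. In one direction, I will use Proposition \ref{prop.RM.universal}, which presents $C^*(R\rtimes M)$ by generators $\{u^x\}, \{s_a\}, \{e_I\}$ and four explicit relations. In the other direction, I will use the universal property of $\cA_{R,M}\rtimes_\alpha^* M$ with respect to isometric covariant representations.

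Let $(\pi^u, V^u)$ denote the universal isometric covariant representation of $(\cA_{R,M}, M, \alpha)$ inside $\cA_{R,M}\rtimes_\alpha^* M$. Inside this $C^*$-algebra I will consider the elements $U^x := \pi^u(u^x)$, $S_a := V^u(a)$, and $E_I := \pi^u(e_I)$, and check they satisfy the four relations of Proposition \ref{prop.RM.universal}. Relations (1)-(3) are immediate from the fact that $\pi^u$ is a unital $*$-homomorphism, $V^u$ is an isometric semigroup representation, and the covariance identity $V^u(a)\pi^u(x)V^u(a)^* = \pi^u(\alpha_a(x))$ applied respectively to $x=u^y$ and $x=e_I$. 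The only slightly subtle point is the commutation $S_a U^x = U^{ax} S_a$: from covariance, $V^u(a)\pi^u(u^x) = \pi^u(u^{ax}e_{aM})V^u(a) = \pi^u(u^{ax})\pi^u(\alpha_a(1))V^u(a)$, and since by Proposition \ref{prop.condition} $\pi^u(\alpha_a(1)) = V^u(a)V^u(a)^*$, this kills the $\pi^u(e_{aM})$ factor and yields $U^{ax}S_a$. Relation (4) follows because it already holds inside $\cA_{R,M}$. By the universal property of $C^*(R\rtimes M)$, this yields a $*$-homomorphism $\Phi: C^*(R\rtimes M) \to \cA_{R,M}\rtimes_\alpha^* M$.

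In the other direction, I will define a pair $(\pi, V)$ into $C^*(R\rtimes M)$ by letting $\pi: \cA_{R,M}\hookrightarrow C^*(R\rtimes M)$ be the inclusion (recall $\cA_{R,M}$ is by definition a $C^*$-subalgebra of $C^*(R\rtimes M)$, and it contains the unit $e_M=I$) and $V(a) := s_a$. Then $\pi$ is a unital $*$-homomorphism and $V$ an isometric representation with $V(e)=I$. To verify the covariance identity $V(a)\pi(x)V(a)^* = \pi(\alpha_a(x))$ it suffices, by linearity, continuity and Lemma \ref{lm.ARspan}, to check it on the spanning generators $u^x e_I u^y$; using $s_a u^x = u^{ax} s_a$ and $s_a e_I s_a^* = e_{aI}$ together with $s_a s_a^* = e_{aM}$, one computes $s_a(u^x e_I u^y) s_a^* = u^{ax}s_a e_I s_a^* u^{ay} = u^{ax} e_{aI} u^{ay} = \alpha_a(u^x e_I u^y)$. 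Hence $(\pi, V)$ is an isometric covariant representation, and the universal property of $\cA_{R,M}\rtimes_\alpha^* M$ yields a $*$-homomorphism $\Psi: \cA_{R,M}\rtimes_\alpha^* M \to C^*(R\rtimes M)$ with $\Psi(\pi^u(a)) = a$ for $a\in\cA_{R,M}$ and $\Psi(V^u(a)) = s_a$.

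Finally, $\Psi\circ\Phi$ fixes the generators $u^x, s_a, e_I$ of $C^*(R\rtimes M)$ and $\Phi\circ\Psi$ fixes the generators $\pi^u(u^x), \pi^u(e_I), V^u(a)$ of $\cA_{R,M}\rtimes_\alpha^* M$, so both compositions are the respective identity maps. The only real subtlety to worry about is whether relation (1) in Proposition \ref{prop.RM.universal}—in particular the commutation $S_a U^x = U^{ax} S_a$—truly holds on the level of the universal covariant pair, and this is what requires the key identity $\pi^u(\alpha_a(1)) = V^u(a)V^u(a)^*$ from Proposition \ref{prop.condition}; everything else amounts to a routine translation between the two presentations.
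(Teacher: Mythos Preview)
Your proof is correct and follows essentially the same approach as the paper: both establish the isomorphism by matching up the generators-and-relations presentation of $C^*(R\rtimes M)$ from Proposition \ref{prop.RM.universal} with isometric covariant representations of $(\cA_{R,M},M,\alpha)$, and the key computation in both is the derivation of $S_aU^x=U^{ax}S_a$ from covariance via $\pi(\alpha_a(1))=S_aS_a^*$. The only cosmetic difference is that the paper phrases the argument as a bijection between \emph{arbitrary} families $\{U^x,S_a,E_I\}$ satisfying (1)--(4) and \emph{arbitrary} isometric covariant pairs $(\pi,S)$, whereas you work directly with the universal objects and build explicit inverse $*$-homomorphisms; your formulation has the minor advantage of sidestepping the need to check that $\pi(u^xe_Iu^y):=U^xE_IU^y$ is well-defined, since you simply take $\pi$ to be the inclusion.
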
 

\begin{proof} By Proposition \ref{prop.RM.universal}, $C^*(R\rtimes M)$ is the universal $C^*$-algebra generated by $\{u^x\}_{x\in R}$, $\{s_a\}_{a\in M}$, and $\{e_I\}_{I\in \cJ(M)}$ such that they satisfy conditions (1) - (4) in Proposition \ref{prop.RM.universal}. We have to show that such generators are in one-to-one correspondence with an isometric covariant representation for $(\cA_{R,M}, M, \alpha)$. Let $\{u^x, s_a, e_I\}$ be the generator for the universal $C^*$-algebra  $C^*(R\rtimes M)$, and recall that $\cA_{R,M}$ is the $C^*$-subalgebra generated by $\{u^x, e_I\}$. 

Given $\{U^x, S_a, E_I\}\subset\bh{H}$ satisfying conditions (1) - (4) in Proposition \ref{prop.RM.universal}, define $\pi:\cA_{R,M}\to\bh{H}$ by $\pi(u^x e_I u^y)=U^x E_I U^y$ and extend linearly. We treat $\{S_a\}_{a\in R^\times}$ as an isometric representation $S:R\to\bh{H}$ with $S(a)=S_a$. We claim that $(\pi,S)$ is an isometric covariant representation of $(\cA_{R,M}, M, \alpha)$. Indeed, $\pi$ is a unital $\ast$-homomorphism and $S$ is an isometric representation. To verify it satisfies the covariance condition, for any $x,y\in R$, $I\in\cJ(M)$, and $a\in M$:
\begin{align*}
S_a \pi(u^x e_I u^y) S_a^* &= S_a U^x E_I U^y S_a^* \\
&= U^{ax} S_a E_I S_a^* U^{ay} \\
&= U^{ax} E_{aI} U^{ay} \\
&= \pi(\alpha_a(u^x e_I u^y)).
\end{align*}
Therefore, for any element $r\in \cA_{R,M}$, $S_a\pi(r) S_a^*=\pi(\alpha_a(r))$. 

Conversely, take an isometric covariant representation $(\pi,S)$. Define $U^x=\pi(u^x)$, $E_I=\pi(e_I)$, and $S_a=S(a)$. We need to show that $\{U^x, S_a, E_I\}$ satisfies conditions (1) - (4) in Proposition \ref{prop.RM.universal}. Since $\pi$ is a unital $\ast$-homomorphism, conditions (2) and (4) are automatically satisfied. For (3),
\[S_a E_I S_a^* = S_a \pi(e_I) S_a^* = \pi(\alpha_a(e_I))=E_{aI}.\]
For (1), $U^x$ are unitary since $\pi$ is a unital $\ast$-homomorphism. Because $S$ is an isometric representation,  $S_a$ are all isometries and $S_aS_b=S_{ab}$. Finally, 
\[S_a U^x S_a^* = S_a \pi(u^x) S_a^* = \pi(\alpha_a(u^x))=\pi(u^{ax} e_{aM})=U^{ax} E_{aM}.\]
By (3), $E_{aM}=S_aS_a^*$, so that $S_a U^x S_a^* = U^{ax} S_a S_a^*$. Multiply $S_a$ on the right gives $S_a U^x = U^{ax} S_a$. 
\end{proof} 

One can define a non-self-adjoint operator algebra associated with the semigroup dynamical system $(\cA_{R,M},M,\alpha)$. In doing so, one has to replace the condition $s_a e_I s_a^*=e_{aI}$ by a non-self-adjoint analogue $V_a E_I=E_{aI} V_a$. It turns out this is precisely the universal non-self-adjoint operator algebra for isometric right covariant representation. 

\begin{proposition}\label{prop.nsa.eq} Let $(\pi,V)$ be an isometric right covariant representation for $(\cA_{R,M},M,\alpha)$. Define $U^x=\pi(u^x)$, $E_I=\pi(e_I)$, and $V_a=V(a)$ for all $x\in R, a\in R^\times$, and $I\in \cJ(M)$. Then,
\begin{enumerate}
\item The $U^x$ are unitary and $U^x U^y=U^{x+y}$. The $V_a$ are isometries and $V_a V_b=V_{ab}$. Moreover, $V_a U^x = U^{ax} V_a$ for all $a\in R^\times$ and $x\in R$.
\item The $E_I$ are projections and $E_I E_J=E_{I\cap J}$, $E_M=I$. 
\item We have $V_a E_I=E_{aI} V_a$.
\item If $x\in I$, $U^x E_I=E_I U^x$; if $x\notin I$, $E_I U^x E_I=0$. 
\end{enumerate}

Conversely, if $\{U^x, E_I, V_a\}$ satisfies conditions (1) - (4), then there exists an isometric right covariant representation $(\pi,V)$ for $(\cA_{R,M},M,\alpha)$ so that $U^x=\pi(u^x)$, $E_I=\pi(e_I)$, and $V_a=V(a)$.
\end{proposition}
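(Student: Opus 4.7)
\emph{Proof plan.} For the forward direction, suppose $(\pi,V)$ is isometric right covariant and set $U^x=\pi(u^x)$, $E_I=\pi(e_I)$, $V_a=V(a)$. Properties (2) and (4) are inherited immediately from $\pi$ being a unital $\ast$-homomorphism, since the analogous relations among $\{u^x,e_I\}$ already hold inside $\cA_{R,M}$. Property (3) is obtained by applying right covariance to $e_I\in\cA_{R,M}$:
\[ V_a E_I = V(a)\pi(e_I) = \pi(\alpha_a(e_I))V(a) = E_{aI}V_a. \]
For the mixed commutation relation in (1), applying right covariance to $u^x$ gives $V_a U^x = \pi(\alpha_a(u^x))V_a = \pi(u^{ax}e_{aM})V_a = U^{ax} E_{aM} V_a$, and Lemma \ref{lm.right.cov} applied to $a=1$ yields $V_a = \pi(\alpha_a(1))V_a = E_{aM} V_a$, so the $E_{aM}$ factor absorbs and we get $V_a U^x = U^{ax} V_a$. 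The remaining parts of (1) are routine.

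For the converse, given $\{U^x,E_I,V_a\}$ on $\cH$ satisfying (1)--(4), the plan is to define $\pi:\cA_{R,M}\to\bh{H}$ by $\pi(u^x e_I u^y)=U^x E_I U^y$ on the dense $\ast$-subalgebra identified in Lemma \ref{lm.ARspan}, and then to verify the right covariance relation. The technical heart of the argument, and the step I expect to be the main obstacle, is showing $\pi$ is well-defined as a $\ast$-algebra homomorphism on this dense subalgebra. Conceptually this should hold because conditions (1), (2), and (4) of our proposition involve only $\{U^x,E_I\}$ and are exactly the relations among $\{u^x,e_I\}$ inherited from $\cA_{R,M}\subset C^*(R\rtimes M)$, so $\{U^x,E_I\}$ generates a quotient of $\cA_{R,M}$ via a canonical $\ast$-homomorphism; formally one may either appeal to a universal property of $\cA_{R,M}$ with respect to these relations, or check directly that the product matches the formula
\[ U^x E_I U^y \cdot U^w E_J U^z = \begin{cases} U^{x+c} E_{I\cap J} U^{d+z} & \text{if } y+w=c+d,\, c\in R_I,\, d\in R_J, \\ 0 & \text{if } y+w\notin R_I+R_J, \end{cases} \]
which mirrors the identical product rule for $\cA_{R,M}$ established in the proof of Lemma \ref{lm.ARspan}. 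Granted well-definedness, $\pi$ is a unital $\ast$-homomorphism that extends continuously to all of $\cA_{R,M}$.

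Once $\pi$ is in place, the right covariance relation $V(a)\pi(b)=\pi(\alpha_a(b))V(a)$ is checked directly on the generators by repeatedly pushing $V_a$ past $U^{\cdot}$ and $E_I$ using conditions (1) and (3):
\begin{align*}
V_a\pi(u^x e_I u^y) &= V_a U^x E_I U^y = U^{ax} V_a E_I U^y \\
&= U^{ax} E_{aI} V_a U^y = U^{ax} E_{aI} U^{ay} V_a \\
&= \pi(u^{ax} e_{aI} u^{ay}) V_a = \pi(\alpha_a(u^x e_I u^y)) V_a.
\end{align*}
Linearity and continuity then extend the identity to all of $\cA_{R,M}$, completing the proof.
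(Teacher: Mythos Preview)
Your proposal is correct and follows essentially the same approach as the paper's proof. The only cosmetic difference is in how the factor $E_{aM}$ is absorbed in the forward direction: you invoke Lemma~\ref{lm.right.cov} with $a=1$ to get $V_a=E_{aM}V_a$, whereas the paper instead uses the already-established relation (3) with $I=M$ to write $E_{aM}V_a=V_aE_M=V_a$; both are equally valid, and the converse direction---defining $\pi$ on the spanning set of Lemma~\ref{lm.ARspan} and checking right covariance on generators---is identical to the paper's argument.
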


\begin{proof} It suffices to verify that $V_a U^x = U^{ax} V_a$ and $V_a E_I=E_{aI} V_a$ because all other conditions are inherited from $\cA_{R,M}$ via the unital $\ast$-homomorphism $\pi$. We have 
\[V_a E_I = V_a \pi(e_I)=\pi(\alpha_a(e_I)) V_a = E_{aI} V_a,\]
and,
\[V_a U^x = V_a \pi(u^x) = \pi(\alpha_a(u^x)) V_a = U^{ax} E_{aM} V_a=U^{ax} V_a E_M=U^{ax} V_a.\]

Conversely, define $\pi(u^x e_I u^y)=U^x E_I U^y$ and $V(a)=V_a$. Then $\pi$ is a unital $\ast$-homomorphism since relations among $\{u^x, e_I\}$ are also satisfied by $\{U^x, E_I\}$. Moreover,
\begin{align*}
V(a) \pi(u^x e_I u^y) &= V_a U^x E_I U^y \\
&= U^{ax} E_{aI} U^{ay} V_a \\
&= \pi(\alpha_a(u^x e_I u^y)) V(a)
\end{align*}
Therefore, $(\pi,V)$ is an isometric right covariant representation. \end{proof} 

As a result of Theorem \ref{thm.envelope}, for a large class of semigroups $R\rtimes M$, their semigroup $C^*$-algebra $C^*(R\rtimes M)$ is the $C^*$-envelope of the universal non-self-adjoint operator algebra with respect to isometric right covariant representation for  $(\cA_{R,M},M,\alpha)$. 

\begin{theorem}\label{thm.envelope.RM} The $C^*$-envelope of $\cA_{R,M}\rtimes_\alpha^{iso} M$ is $C^*(R\rtimes M)$. 
\end{theorem}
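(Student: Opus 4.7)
The plan is to recognize this theorem as a direct application of the main result Theorem \ref{thm.envelope} to the specific semigroup dynamical system $(\cA_{R,M}, M, \alpha)$, combined with the identification in Proposition \ref{prop.iso1}. Essentially all the hard work has already been done in the preceding lemmas; the task is to verify the hypotheses and invoke the machinery.

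First I would check that $(\cA_{R,M}, M, \alpha)$ satisfies every hypothesis of Theorem \ref{thm.envelope}. The semigroup $M$ is abelian because it sits inside the commutative multiplicative semigroup $R^\times$. Injectivity of each $\alpha_a$, the hereditary property of $\alpha_a(\cA_{R,M}) = \alpha_a(1) \cA_{R,M} \alpha_a(1)$, and alignment of the projections $\alpha_a(1) = e_{aR}$ (which mutually commute since $e_{aR} e_{bR} = e_{aR \cap bR}$) were all established in Lemma \ref{lm.alphaa.endo}. Thus the dynamical system falls squarely within the framework of Section \ref{sec.main}.

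Having verified the hypotheses, Theorem \ref{thm.envelope} immediately yields
\[ C^*_{env}(\cA_{R,M}\rtimes_\alpha^{iso} M) = \cA_{R,M}\rtimes_\alpha^* M. \]
Proposition \ref{prop.iso1} then identifies $\cA_{R,M}\rtimes_\alpha^* M$ with $C^*(R\rtimes M)$, completing the proof. The identification in Proposition \ref{prop.iso1} is the non-trivial structural input: it matches the universal property of $\cA_{R,M}\rtimes_\alpha^* M$ with respect to isometric covariant representations against the generator-and-relation description of $C^*(R\rtimes M)$ from Proposition \ref{prop.RM.universal}.

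There is not really a main obstacle remaining at this stage; any difficulty was absorbed into the preparatory work. If I had to point to a subtle spot, it would be making sure that the standing assumption of Section \ref{sec.main} (projections aligned, images hereditary, injectivity) is not quietly lost under the passage from the abstract $(\cA, P, \alpha)$ to the concrete $(\cA_{R,M}, M, \alpha)$. But Lemma \ref{lm.alphaa.endo} handles exactly this, so the proof reduces to a one-line citation of Theorem \ref{thm.envelope} followed by Proposition \ref{prop.iso1}.
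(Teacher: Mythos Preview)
Your proposal is correct and matches the paper's approach exactly: the paper presents Theorem~\ref{thm.envelope.RM} as an immediate consequence of Theorem~\ref{thm.envelope} (with the hypotheses verified in Lemma~\ref{lm.alphaa.endo}) together with the identification $\cA_{R,M}\rtimes_\alpha^* M \cong C^*(R\rtimes M)$ from Proposition~\ref{prop.iso1}.
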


We devote the rest of this section to a number of examples. 

\begin{example}\label{ex.CDL} The semigroup $C^*$-algebra $C^*(R\rtimes R^\times)$ (also denoted $\fT_R$) was first studied by Cuntz, Deninger, and Laca \cite{CDL2013}. The original motivation of this paper is that Wiart \cite{Jaspar2016} proved that $C^*(R\rtimes R^\times)$ is the $C^*$-envelope of $\cA_R\rtimes_{\alpha'}^{iso} R^\times$ for some semigroup dynamical system $(\cA_R, R^\times, \alpha')$. Here, $\cA_R$ is the $C^*$-subalgebra of $C^*(R\rtimes R^\times)$ generated by $u^x, e_I$, and $\alpha'_a(u^x)=u^{ax}$, $\alpha'_a(e_I)=e_{aI}$. One may notice the slight difference between the definition $\alpha'$ and our definition (where $\alpha_a(u^x)=u^{ax} e_{aR}$). For an isometric $\ast$-representation $(\pi,S)$, Proposition \ref{prop.iso1} proves that $S_a \pi(u^x) S_a^*=\pi(\alpha_a(u^x))$, but not for $\alpha_a'$. This is the reason we adopted this slightly different action.
Nevertheless, Proposition \ref{prop.nsa.eq} proves that the non-self-adjoint operator algebra constructed by two actions are the same: in \cite{Jaspar2016}, the non-self-adjoint operator algebra $\cA_R\rtimes_{\alpha'}^{iso} R^\times$ is defined as the universal non-self-adjoint operator algebra generated from conditions (1) - (4) of the Proposition \ref{prop.nsa.eq}. Proposition \ref{prop.nsa.eq} now clarifies that this is precisely $\cA_R\rtimes_\alpha^{iso} R^\times$ in our definition. 

It is known that the $ax+b$ semigroup $C^*$-algebra $C^*(R\rtimes R^\times)$ is functorial \cite[Proposition 3.2, Theorem 4.13]{CDL2013}. In other words, let $R$ and $S$ be rings of algebraic integers where $R\subset S$, then there exists an injective $*$-homomorphism from   $C^*(R\rtimes R^\times)$ to $C^*(S\rtimes S^\times)$. By Corollary \ref{cor.functor}, the functoriality of the $C^*$-algebra translates to the functoriality of the non-self-adjoint operator algebra. In other words, there exists a canonical completely isometric homomorphism from  $\cA_R\rtimes_\alpha^{iso} R^\times$ to  $\cA_S\rtimes_\alpha^{iso} S^\times$.
\end{example} 

\begin{example} In many examples, a quotient of the $C^*$-algebra $\cA_{R,M}$ is considered instead. For example, Larsen and Li  \cite{LarsenLi2012} considered a $C^*$-algebra for the semigroup $\mathbb{Z}\rtimes \langle 2\rangle$. In our setup, the ring $R=\mathbb{Z}$ and the unital semigroup $M$ is singly generated by $2$. They considered the universal $C^*$-algebra $\cQ_2$ generated by $u, s_2$, where $u$ is unitary and $s_2$ is an isometry, such that 
\[s_2 u = u^2 s_2,\]
and,
\[s_2 s_2^* + u s_2 s_2^* u^* = I \]

The $C^*$-algebra $\cQ_2$ is in fact a quotient of the semigroup $C^*$-algebra $C^*(\mathbb{Z}\rtimes \langle 2\rangle)$, where the latter only requires $s_2 s_2^* + u s_2 s_2^* u^* \leq I$. 
Denote $e_n=s_2^n s_2^{n*}$. We have $e_1 + ue_1 u^*=I$. 
Let $\cB_2$ be the $C^*$-subalgebra generated by $\{u^x, e_n: n\in\mathbb{N}, x,y\in \mathbb{Z}\}$. Define an $M$-action given by $\alpha_{2^n}(u^x)=u^{2^n x} e_n$ and $\alpha_{2^n}(e_m)=e_{m+n}$. By Proposition \ref{prop.nsa.eq}, the non-self-adjoint operator algebra $\cB_{2}\rtimes_\alpha^{iso} M$ is the universal operator algebra generated by a unitary $U$, orthogonal projections $E_n$, and an isometry $S_2$ such that 
\begin{enumerate}
\item $S_2 U=U^2 S_2$
\item $I=E_0 \geq E_1 \geq E_2 \cdots$
\item $S_2 E_n = E_{n+1} S_2$
\item If $2^n | x$, $U^x E_n=E_n U^x$; if $2^n \nmid x$, $E_n U^x E_n=0$.  
\item $E_1 + U E_1 U^*=I$. 
\end{enumerate}

Here, the extra condition $E_1 + U E_1 U=I$ ensures that $\{U,E_n\}$ corresponds to a $\ast$-representation of $\cB_2$. Theorem \ref{thm.envelope.RM} implies that $C^*_{env}(\cB_{2}\rtimes_\alpha^{iso} M)=\cQ_2$.
\end{example} 

\begin{example} As a generalization to Larsen and Li's construction, Barlack, Omland, and Stameier recently considered a $C^*$-algebra $\cQ_S$ \cite{BOS2018}. Here, $S$ is a family of relatively prime natural numbers, which generate a unital sub-semigroup $M=\langle S\rangle\subset \mathbb{N}^\times$. The $\cQ_S$ is the universal $C^*$-algebra generated by a unitary $u$ and isometries $\{s_p\}_{p\in S}$, such that
\[s_p s_q=s_{pq}, s_p u = u^p s_p, \mbox{ and } \sum_{m=0}^{p-1} u^m s_ps_p^* u^{m*}=I.\]

The isometries $s_p$ corresponds to an isometric representation of the unital semigroup $M$. For each $h\in M$, one can uniquely define $s_h=\prod_{p\in F} s_p$ for $h=\prod_{p\in F} p$. Denote $e_h=s_h s_h^*$. The $C^*$-algebra $\cQ_S$ can be viewed as the universal $C^*$-algebra for the semigroup dynamical system $(\cB_S, M, \alpha)$, where $\cB_S$ is the $C^*$-subalgebra generated by $u$ and $e_{n}$, and $\alpha_a(u^x e_n u^y)=u^{ax} e_{an} u^{ay}$. 

The non-self-adjoint operator algebra $\cB_S\rtimes_\alpha^{iso} M$ is the universal operator algebra generated by a unitary $U$, orthogonal projections $\{E_h\}_{h\in M}$, and isometries $\{S_p\}_{p\in S}$ such that 
\begin{enumerate}
\item $S_p U=U^p S_p$
\item For $h\leq k$, $E_h\geq E_k$. 
\item $S_p E_h = E_{ph} S_p$
\item If $h | x$, $U^x E_h=E_h U^x$; if $h \nmid x$, $E_h U^x E_h=0$.  \item For each $p\in S$, $\sum_{m=0}^{p-1} U^m E_p U^{m*}=I$.  
\end{enumerate}
Theorem \ref{thm.envelope.RM} shows that $C^*_{env}(\cB_S\rtimes_\alpha^{iso} M)=\cQ_S$. 

\end{example}

\section{Additional Examples and Applications}\label{sec.misc}

\counterwithin{theorem}{subsection} 

\subsection{Isometric and Unitary Representation}\label{ssec.trivial}

Consider the injective semigroup dynamical system $(\mathbb{C},P,id)$, where an abelian semigroup $P$ act trivially on $\mathbb{C}$. 
Any unital $\ast$-homomorphism $\pi:\mathbb{C}\to\bh{H}$ must be given by $\pi(\lambda)=\lambda I$. 

For any isometric representation $V$ of $P$, $(\pi, V)$ is an isometric right covariant representation since for any $p\in P$ and $\lambda\in\mathbb{C}$, \[V(p)\pi(\lambda)=\lambda V(p)=\pi(id_p(\lambda)) V(p).\] 
The non-self-adjoint operator algebra $\mathbb{C}\rtimes^{iso}_{id} P$ is the universal operator algebra generated by isometric representations of $P$. 

On the other hand, $(\pi,V)$ is an isometric covariant representation if and only if for each $p\in P$, $V(p)V(p)^*=\pi(id_p(1))=I$. Therefore, in this case, $V(p)$ is in fact unitary. Therefore, the $C^*$-algebra $\mathbb{C}\rtimes^{*}_{id} P$ is the universal $C^*$-algebra generated by unitary representations of $P$. Since $P$ is abelian, we can embed $P$ inside a group $G$ so that $G=P^{-1}P$. Unitary representations of $P$ corresponds to unitary representations of $G$. Therefore, $\mathbb{C}\rtimes^{*}_{id} P\cong C^*(G)$.

Theorem \ref{thm.envelope} gives the following result:

\begin{corollary}\label{cor.trivial} Let $\cT_P^+$ be the universal operator algebra generated by isometric representation of an abelian semigroup $P$, and let $G\supset P$ be a group such that $G=P^{-1}P$. Then \[C^*_{env}(\cT_P^+)=C^*(G).\]
\end{corollary}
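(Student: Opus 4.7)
The plan is to apply Theorem~\ref{thm.envelope} directly to the trivial semigroup dynamical system $(\mathbb{C}, P, id)$ that was introduced earlier. First I would verify that this system satisfies the standing hypotheses: each $id_p$ is trivially an injective $\ast$-endomorphism, $P$ is abelian by assumption, the projections $id_p(1) = 1$ trivially pairwise commute (so the projections are aligned), and $id_p(\mathbb{C}) = \mathbb{C}$ is vacuously hereditary in itself. Thus Theorem~\ref{thm.envelope} applies and yields
\[
C^*_{env}(\mathbb{C}\rtimes^{iso}_{id} P) = \mathbb{C}\rtimes^{*}_{id} P.
\]

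Next I would identify both sides of this equation with the operator algebras in the statement. The paragraph preceding the corollary already observes that any unital $\ast$-homomorphism $\pi:\mathbb{C}\to\bh{H}$ is forced to be $\pi(\lambda)=\lambda I$, and that under this rigidity the right covariance condition is automatic for every isometric representation $V$ of $P$. Hence an isometric right covariant representation of $(\mathbb{C},P,id)$ is exactly an isometric representation of $P$, so $\mathbb{C}\rtimes^{iso}_{id} P = \cT_P^+$ by the universal property. Similarly, an isometric covariant representation must satisfy $V(p)V(p)^* = \pi(id_p(1)) = I$ by Proposition~\ref{prop.condition}, making each $V(p)$ unitary, so $\mathbb{C}\rtimes^{*}_{id} P$ is the universal $C^*$-algebra generated by unitary representations of $P$.

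The remaining step is to identify this universal $C^*$-algebra for unitary representations of the abelian semigroup $P$ with the group $C^*$-algebra $C^*(G)$. Since $P$ is abelian (hence Ore) and $G = P^{-1}P$, any unitary representation $V$ of $P$ extends uniquely to a unitary representation $\widetilde{V}$ of $G$ by setting $\widetilde{V}(p^{-1}q) = V(p)^* V(q)$; this is well-defined because $V$ is a homomorphism from $P$ into an abelian-generated unitary subgroup, and it recovers $V$ on $P$. Conversely, every unitary representation of $G$ restricts to a unitary representation of $P$. This correspondence is bijective and compatible with the respective universal norms, giving $\mathbb{C}\rtimes^{*}_{id} P \cong C^*(G)$. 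Combining this with the equality supplied by Theorem~\ref{thm.envelope} yields $C^*_{env}(\cT_P^+) = C^*(G)$, as desired.

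I do not expect any real obstacle here, since the corollary is essentially a bookkeeping application of the main theorem; the only subtlety is the passage from isometric covariant representations on $(\mathbb{C}, P, id)$ to unitary representations of $G$, which is the standard Ore-semigroup extension and was already signposted in the remark following the definition of $\cA\rtimes^*_\alpha P$.
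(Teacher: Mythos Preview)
Your proposal is correct and follows essentially the same approach as the paper: apply Theorem~\ref{thm.envelope} to the trivial system $(\mathbb{C},P,id)$, then identify $\mathbb{C}\rtimes^{iso}_{id} P$ with $\cT_P^+$ and $\mathbb{C}\rtimes^{*}_{id} P$ with $C^*(G)$ via the Ore extension of unitary representations. The paper carries out exactly these identifications in the paragraphs immediately preceding the corollary and then simply invokes Theorem~\ref{thm.envelope}.
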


This is a known result to experts in the field (for example, it can be inferred by Laca's dilation on Ore semigroups \cite{Laca2000}). 

\begin{example} Let $P=\mathbb{N}$. An isometric representation of $\mathbb{N}$ is generated by a single isometry $V$. In this case, $\cT_\mathbb{N}^+$ is the disk algebra $\cA(\mathbb{D})$, and it is a well-known fact that its $C^*$-envelope is $C^*(\mathbb{Z})\cong C(\mathbb{T})$.  
\end{example} 

\subsection{The Cuntz Algebra $\cO_k$ and UHF core} 

The celebrated Cuntz algebra $\cO_k$, first studied in  \cite{Cuntz1977}, is the universal $C^*$-algebra generated by $k$ isometries $W_i$ with \[\sum_{i=1}^k W_i W_i^* = I.\]
For a word $\mu=a_1a_2\cdots a_n\in\mathbb{F}_k^+$ where each $a_i\in\{1,\cdots, k\}$, we say the length of $\mu$ is $n$, denoted by $|\mu|=n$. We also denote $W_\mu=W_{a_1}W_{a_2}\cdots W_{a_n}$. The UHF core is defined to be the $C^*$-subalgebra $\cA_k$ generated by $\{W_\mu W_\nu^*: |\mu|=|\nu|\}$. For any word $\mu,\nu$, it is well known that
\[W_\mu^* W_\nu = \begin{cases} W_{\nu'} & \mbox{ if } \nu=\mu\nu'; \\
W_{\mu'}^* & \mbox{ if } \mu=\nu\mu'; \\
0 & \mbox{ otherwise.} 
\end{cases}\]
Therefore, one can check that 
\[\cA_k=\overline{\lspan}\{ W_\mu W_\nu^*: |\mu|=|\nu|\}.\]
Moreover, for each $n$, let
\[\cA_k^n=\overline{\lspan}\{ W_\mu W_\nu^*: |\mu|=|\nu|=n\},\]
and it is isomorphic to the matrix algebra $\cM_{k^n}$, where elements of the form $W_\mu W_\nu^*$ act as matrix units. Consequently, $\cA_k$ is a UHF-algebra of type $k^\infty$ (see \cite[Section V.4]{KenCStarByExample}). 

Define $\alpha_1:\cA_k\to\cA_k$ be the $\ast$-endomorphism where $\alpha_1(W_\mu W_\nu^*)=W_{1\mu} W_{1\nu}^*$, which is the conjugation by $W_1$. This is an injective $\ast$-endomorphism on $\cA_k$, and one can check that its image is 
\[\alpha_1(\cA_k)=\overline{\lspan}\{ W_1 W_{\mu} W_{\nu}^* W_1^*: |\mu|=|\nu|\}.\]
As a result, $\alpha_1(\cA_k)=\alpha_1(1)\cA_k\alpha_1(1)$ and thus $\alpha_1(\cA_k)$ is hereditary. 

One can associate a semigroup dynamical system $(\cA_k, \mathbb{N}, \alpha)$, where $\alpha_n=\alpha_1^n$.
For any $n,m\in\mathbb{N}$, \[\alpha_n(1)\alpha_m(1)=W_1^n W_1^{n*} W_1^m W_1^{m*}=W_1^{\max\{n,m\}} W_1^{\max\{n,m\}*}.\]
Moreover, 
\[\alpha_n(\cA_k)=\overline{\lspan}\{ W_1^k W_{\mu} W_{\nu}^* W_1^{k*}: |\mu|=|\nu|\},\]
which is also equal to $\alpha_n(1)\cA_k\alpha_n(1)$, and thus is hereditary. Therefore, this injective semigroup dynamical system satisfies that each $\alpha_n(\cA_k)$ is hereditary and its projections are aligned, thereby fitting inside the framework considered in this paper. 

\begin{proposition} The Cuntz algebra $\cO_k$ is isomorphic to the $C^*$-algebra $\cA_k\rtimes_\alpha^* \mathbb{N}$. 
\end{proposition}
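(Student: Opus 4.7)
The plan is to construct mutually inverse $*$-homomorphisms between $\cO_k$ and $\cA_k\rtimes_\alpha^* \mathbb{N}$ using universal properties of both algebras.

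\textbf{Step 1: From $\cA_k\rtimes_\alpha^* \mathbb{N}$ into $\cO_k$.} The natural inclusion $\iota:\cA_k\hookrightarrow\cO_k$ is a unital $*$-homomorphism, and the map $n\mapsto W_1^n$ is an isometric representation of $\mathbb{N}$ in $\cO_k$. Since $W_1^n a W_1^{n*}=\alpha_n(a)$ for every $a\in\cA_k$ by definition of $\alpha$, the pair $(\iota,W_1^\bullet)$ is an isometric covariant representation of $(\cA_k,\mathbb{N},\alpha)$. By universality of $\cA_k\rtimes_\alpha^* \mathbb{N}$, it induces a $*$-homomorphism
\[\Phi:\cA_k\rtimes_\alpha^* \mathbb{N}\to\cO_k,\qquad \Phi(\pi^u(a))=a,\quad \Phi(v)=W_1,\]
where $(\pi^u,V^u)$ is the universal covariant pair and $v=V^u(1)$. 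Since $W_i=W_i W_1^* W_1=\Phi(\pi^u(W_i W_1^*)v)$ for all $i$, the image of $\Phi$ contains all Cuntz generators, so $\Phi$ is surjective.

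\textbf{Step 2: From $\cO_k$ into $\cA_k\rtimes_\alpha^* \mathbb{N}$.} Inside $\cA_k\rtimes_\alpha^* \mathbb{N}$, define
\[S_i=\pi^u(W_iW_1^*)\,v\qquad(i=1,\dots,k).\]
Using $v^*v=I$ and $vv^*=\pi^u(\alpha_1(1))=\pi^u(W_1W_1^*)$, one computes
\[S_i^*S_i=v^*\pi^u(W_1W_i^*W_iW_1^*)v=v^*\pi^u(W_1W_1^*)v=v^*vv^*v=I,\]
\[S_iS_i^*=\pi^u(W_iW_1^*)vv^*\pi^u(W_1W_i^*)=\pi^u(W_iW_1^*W_1W_1^*W_1W_i^*)=\pi^u(W_iW_i^*).\]
Summing gives $\sum_i S_iS_i^*=\pi^u(\sum_i W_iW_i^*)=\pi^u(I)=I$, so $\{S_i\}$ satisfies the Cuntz relations. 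By universality of $\cO_k$, we obtain a $*$-homomorphism $\Psi:\cO_k\to\cA_k\rtimes_\alpha^* \mathbb{N}$ with $\Psi(W_i)=S_i$.

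\textbf{Step 3: Verify $\Phi\circ\Psi=\mathrm{id}$ and $\Psi\circ\Phi=\mathrm{id}$.} The first is immediate on generators: $\Phi(S_i)=W_iW_1^*\cdot W_1=W_i$. For the second, the only delicate point is to show $\Psi(a)=\pi^u(a)$ for all $a\in\cA_k$. Using the covariance identity $v\pi^u(b)=\pi^u(\alpha_1(b))v$, an induction on $|\mu|=n$ yields
\[S_\mu=\pi^u(W_\mu W_1^{n*})\,v^n,\]
and then
\[S_\mu S_\nu^*=\pi^u(W_\mu W_1^{n*})\,v^n v^{n*}\,\pi^u(W_1^n W_\nu^*)=\pi^u(W_\mu W_1^{n*}W_1^n W_1^{n*}W_1^n W_\nu^*)=\pi^u(W_\mu W_\nu^*)\]
for $|\mu|=|\nu|=n$. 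Since such elements span a norm-dense subalgebra of $\cA_k$, this gives $\Psi\circ\iota_{\cA_k}=\pi^u$, where $\iota_{\cA_k}:\cA_k\hookrightarrow\cO_k$. Combined with $\Psi(W_1)=S_1=\pi^u(\alpha_1(1))v=vv^*v=v$, we conclude $\Psi\circ\Phi=\mathrm{id}$ on the generating set $\pi^u(\cA_k)\cup\{v\}$.

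\textbf{Expected main obstacle.} The only substantive calculation is the inductive identity $S_\mu=\pi^u(W_\mu W_1^{n*})v^n$, which requires careful use of the covariance relation to push $v$ past elements of $\pi^u(\cA_k)$. Everything else is a routine check on generators once the two universal properties are lined up.
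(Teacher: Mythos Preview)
Your proof is correct and follows essentially the same approach as the paper: both arguments hinge on the observation that from a covariant pair $(\pi,V)$ one recovers Cuntz isometries via $S_i=\pi(W_iW_1^*)V(1)$, and both verify the Cuntz relation $\sum_i S_iS_i^*=I$ using $V(1)V(1)^*=\pi(\alpha_1(1))=\pi(W_1W_1^*)$. The paper phrases the conclusion as a bijection between covariant representations and Cuntz families, whereas you build explicit mutually inverse $*$-homomorphisms and carry out the extra verification $S_\mu S_\nu^*=\pi^u(W_\mu W_\nu^*)$ to check $\Psi\circ\Phi=\mathrm{id}$; this makes your argument slightly more complete, but the underlying idea is identical.
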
 

\begin{proof} Let $(\pi_S,S)$ be an isometric covariant representation of the dynamical system $(\cA_k, \mathbb{N}, \alpha)$. $S$ is uniquely determined by a single isometry $S_1=S(1)$ and $S(n)=S_1^n$. For each $1\leq i\leq k$, $S_i=\pi_S(W_i W_1^*) S(1)$. Here, for $i=1$, this is well defined since
\[\pi_S(W_1 W_1^*) S(1)=\pi(\alpha_1(I)) S(1)=S(1)\pi_S(I)=S(1).\]
Then, 
\begin{align*}
\sum_{i=1}^k S_i S_i^* &= \sum_{i=1}^k \pi_S(W_i W_1^*) S(1)S(1)^*\pi_S(W_1 W_i^*)  \\
&= \sum_{i=1}^k  \pi_S(W_i W_1^*) \pi_S(W_1W_1^*) \pi_S(W_1 W_i^*) \\
&= \sum_{i=1}^k  \pi_S(W_i W_i^*) \\
&=   \pi_S(\sum_{i=1}^k W_i W_i^*) = \pi_S(I)=I 
\end{align*}
Therefore, $\{S_i\}$ defines a representation of the Cuntz algebra $\cO_k$. Conversely, every family $\{S_i\}$ that generates $\cO_k$ defines an isometric covariant representation $(\pi_S,S)$ by $\pi_S(W_\mu W_\nu^*)=S_\mu S_\nu^*$ and $S(n)=S_1^n$. Therefore, Cuntz isometries is in one-to-one correspondence with isometric covariant representations of  $(\cA_k, \mathbb{N}, \alpha)$, and thus their universal $C^*$-algebras are isomorphic. 
\end{proof}

We can construct a non-self-adjoint operator algebra $\cA \rtimes_\alpha^{iso} \mathbb{N}$, universal with respect to isometric right covariant representation. Theorem \ref{thm.envelope} applies:

\begin{theorem} The Cuntz algebra $\cO_k$ is the $C^*$-envelope of $\cA \rtimes_\alpha^{iso} \mathbb{N}$.
\end{theorem}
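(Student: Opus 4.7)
The proof should be essentially immediate by stitching together the two main tools already in place. My plan is to verify that the semigroup dynamical system $(\cA_k,\mathbb{N},\alpha)$ falls inside the hypotheses of Theorem \ref{thm.envelope}, then invoke the preceding proposition to identify the universal $C^*$-algebra with $\cO_k$.

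First, I would point out that the standing assumptions have already been checked in the paragraphs preceding the statement. The semigroup $\mathbb{N}$ is abelian. The endomorphism $\alpha_1 = \operatorname{Ad}(W_1)$ is injective on $\cA_k$ (one sees this directly since $W_1^* \alpha_1(x) W_1 = x$ using $W_1^* W_1 = I$), so each $\alpha_n = \alpha_1^n$ is injective. The image $\alpha_n(\cA_k) = W_1^n \cA_k W_1^{n\,*} = \alpha_n(1)\cA_k\alpha_n(1)$ is hereditary by Proposition \ref{prop.hereditary}. The projections $\alpha_n(1) = W_1^n W_1^{n\,*}$ form a decreasing chain, hence commute pairwise. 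Thus $(\cA_k,\mathbb{N},\alpha)$ satisfies all standing assumptions.

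Next I would apply Theorem \ref{thm.envelope} to this system, which yields
\[
C^*_{env}(\cA_k \rtimes^{iso}_\alpha \mathbb{N}) \;=\; \cA_k \rtimes^*_\alpha \mathbb{N}.
\]
Combining this with the preceding proposition, which identifies $\cA_k \rtimes^*_\alpha \mathbb{N} \cong \cO_k$ via the bijective correspondence between isometric covariant representations of $(\cA_k,\mathbb{N},\alpha)$ and families of Cuntz isometries, gives $C^*_{env}(\cA_k \rtimes^{iso}_\alpha \mathbb{N}) \cong \cO_k$, which is the desired conclusion.

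Since both input results are already established, there is no real obstacle here; the only thing worth emphasizing in the writeup is that all three standing assumptions (injectivity, aligned projections, and hereditary image) genuinely hold, so that Theorem \ref{thm.envelope} applies verbatim. The theorem is therefore stated as a corollary-style consequence, and the proof can be a single sentence citing Theorem \ref{thm.envelope} together with the identification $\cA_k \rtimes^*_\alpha \mathbb{N} \cong \cO_k$.
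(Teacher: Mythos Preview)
Your proposal is correct and matches the paper's approach exactly: the paper simply states ``Theorem \ref{thm.envelope} applies'' without further proof, relying on the verification of the standing assumptions in the preceding paragraphs and the identification $\cA_k \rtimes_\alpha^* \mathbb{N} \cong \cO_k$ from the proposition immediately before. Your writeup makes explicit precisely what the paper leaves implicit.
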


\subsection{Semigroup $C^*$-algebras}

Let $P$ be an abelian left-cancellative semigroup, and $\cJ(P)$ be the set of all constructible ideals of $P$. We briefly reviewed Xin Li's construction of its semigroup $C^*$-algebra $C^*(P)$ in Definition \ref{def.semigroupC}. In particular, in Section \ref{sec.ex}, we focused on the case of $R\rtimes M$. Here, we explore another semigroup dynamical system arising from a general left-cancellative semigroup.

Let $\{v_x\}_{x\in P}$ and $\{e_I\}_{I\in\cJ(P)}$ be the generators for $C^*(P)$. Let $\cD_P$ be the $C^*$-subalgebra generated by $\{e_I\}_{I\in\cJ(P)}$. Since the product of $e_I$ and $e_J$ is $e_{I\cap J}$, it is clear that 
\[\cD_p=\overline{\lspan}\{e_I: I\in\cJ(P)\}.\]
Define a semigroup dynamical system $(\cD_P, \alpha,P)$ by $\alpha_p(e_I)=e_{pI}$. One can easily verify that the universal $C^*$-algebra $\cD_p\rtimes_\alpha^* P$ with respect to isometric covariant representations is precisely the semigroup $C^*$-algebra $C^*(P)$ in the sense of Xin Li. 

On the other hand, the non-self-adjoint operator algebra $\cD_p\rtimes_\alpha^{iso} P$ can be described as the universal operator algebra generated by a similar set of generators.

\begin{proposition} The non-self-adjoint operator algebra $\cD_p\rtimes_\alpha^{iso} P$ is the universal operator algebra generated by isometries $\{V_p\}_{p\in P}$ and orthogonal projections $\{E_I\}_{I\in\cJ(P)}$, such that 
\begin{enumerate}
\item $V_pV_q=V_{pq}$,
\item $V_p E_I = E_{pI} V_p$,
\item $E_\emptyset=0$ and $E_P=I$,
\item $E_I E_J=E_{I\cap J}$. 
\end{enumerate}
\end{proposition}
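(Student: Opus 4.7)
The strategy is to set up a bijective correspondence between isometric right covariant representations of the semigroup dynamical system $(\cD_P, P, \alpha)$ and families $\{V_p, E_I\}$ of operators on a Hilbert space satisfying conditions (1)--(4), paralleling the structure of the proof of Proposition \ref{prop.nsa.eq}. Once this correspondence is shown to respect direct sums and matrix amplifications, the matrix norms on $\cD_P \rtimes_\alpha^{iso} P$ agree with those of the universal operator algebra generated by (1)--(4), giving the claimed isomorphism.

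For the forward direction, take an isometric right covariant representation $(\pi, V)$ and set $V_p = V(p)$, $E_I = \pi(e_I)$. Condition (1) is then just the statement that $V$ is a semigroup representation by isometries; conditions (3) and (4) follow from the fact that $\pi$ is a unital $*$-homomorphism and the $e_I$ themselves satisfy $e_I e_J = e_{I \cap J}$, $e_P = 1$, and $e_\emptyset = 0$ in $\cD_P$. Condition (2) is an application of right covariance: $V_p E_I = V(p)\pi(e_I) = \pi(\alpha_p(e_I)) V(p) = \pi(e_{pI}) V(p) = E_{pI} V_p$.

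The backward direction is the more delicate one. Given $\{V_p, E_I\}$ satisfying (1)--(4), I would set $V(p) = V_p$ and define $\pi: \cD_P \to \bh{H}$ on generators by $\pi(e_I) = E_I$. That $V(e) = I$ follows from (1) applied to $p = q = e$, which gives $V_e^2 = V_e$; combined with $V_e$ being an isometry, this forces $V_e = I$. To extend $\pi$ to a unital $*$-homomorphism on $\cD_P$, I would use the fact that $\cD_P$ is the commutative $C^*$-algebra generated by the commuting projections $\{e_I\}$ whose only relations among themselves are precisely (3)--(4); this is built into Xin Li's construction, where the diagonal subalgebra is modeled on the meet semi-lattice $\cJ(P)$. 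The right covariance relation $V(p) \pi(e_I) = \pi(\alpha_p(e_I)) V(p)$ is immediate from (2) on generators and extends by linearity and continuity to all of $\cD_P$.

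The main obstacle is justifying that $\pi$ extends well-definedly from the generators to all of $\cD_P$: this is not a formal consequence of (3)--(4) alone but relies on the identification of $\cD_P$ as the universal commutative $C^*$-algebra of the meet semi-lattice $\cJ(P)$. Once this is in place, the universal property packaging on both sides makes the operator algebras coincide. Any isometric right covariant representation gives a family satisfying (1)--(4) and vice versa, matrix amplifications transport under the same correspondence, and so the supremum defining the universal matrix norms on one side equals the supremum on the other, yielding a completely isometric isomorphism.
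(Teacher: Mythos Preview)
Your proposal is correct and follows essentially the same approach as the paper's proof: establish a bijection between isometric right covariant representations of $(\cD_P,P,\alpha)$ and families $\{V_p,E_I\}$ satisfying (1)--(4), then conclude that the two universal operator algebras coincide. You are in fact more careful than the paper about the points it glosses over---the well-definedness of $\pi$ on $\cD_P$ from (3)--(4), the verification that $V(e)=I$, and the passage to matrix norms---but the underlying argument is the same.
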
 

\begin{proof} For each isometric right covariant representation $(\pi,V)$ of $(\cD_P,\alpha,P)$, define $E_I=\pi(e_I)$ and $V_p=V(p)$. It is simple to verify that $\{V_p\}_{p\in P}$ and $\{E_I\}_{I\in\cJ(P)}$ satisfies conditions (1) - (4). 

Conversely, given such $\{V_p\}_{p\in P}$ and $\{E_I\}_{I\in\cJ(P)}$, define $V(p)=V_p$ and $\pi(e_I)=E_I$. Condition (1) ensures that $V$ is an isometric representation; conditions (3) and (4) ensure that $\pi$ is a unital $*$-homomorphism of $\cD_P$; condition (2) ensures that $(\pi,V)$ is an isometric right covariant representation. 

Now a generatoring family $\{V_p\}_{p\in P}$ and $\{E_I\}_{I\in\cJ(P)}$ is in one-to-one correspondence with an isometric right covariant representation, and thus their universal operator algebras coincide. 
\end{proof} 

As an application of Theorem \ref{thm.envelope},

\begin{corollary}\label{cor.cstarP} The semigroup $C^*$-algebra $C^*(P)$ is the $C^*$-envelope of $\cD_p\rtimes_\alpha^{iso} P$. 
\end{corollary}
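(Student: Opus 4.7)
The plan is to verify that $(\cD_P, P, \alpha)$ satisfies the standing hypotheses of the paper, identify $\cD_P \rtimes_\alpha^* P$ with $C^*(P)$, and then invoke Theorem \ref{thm.envelope}. The proposition preceding the corollary already describes $\cD_P \rtimes_\alpha^{iso} P$ via generators and relations, so the remaining work is essentially bookkeeping on the self-adjoint side together with the dynamical-system checks.

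First I would observe that on generators, $\alpha_p(e_I) = e_{pI} = v_p e_I v_p^*$, so $\alpha_p(d) = v_p d v_p^*$ for every $d \in \cD_P$. Injectivity of $\alpha_p$ is then immediate from $v_p^* v_p = 1$, and alignment of projections is automatic since $\alpha_p(1) = v_p v_p^* = e_{pP}$ and the projections $\{e_I\}$ commute by the relation $e_I e_J = e_{I \cap J}$. For the hereditary condition, I would verify $\alpha_p(\cD_P) = e_{pP}\, \cD_P\, e_{pP}$ directly on spanning elements: using the identity $p \cdot (p^{-1} I) = I \cap pP$ of right ideals, one gets
\[
e_{pP} e_I e_{pP} = e_{I \cap pP} = e_{p \cdot (p^{-1}I)} = \alpha_p(e_{p^{-1}I}),
\]
so Proposition \ref{prop.hereditary} gives the hereditary property.

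Next I would establish $\cD_P \rtimes_\alpha^* P \cong C^*(P)$ by matching universal properties. Given any isometric covariant representation $(\pi, V)$ of $(\cD_P, P, \alpha)$, the operators $\{V(p)\} \cup \{\pi(e_I)\}$ satisfy all four relations of Definition \ref{def.semigroupC}: the isometry and projection relations are inherited from $V$ and $\pi$, while $V(p) \pi(e_I) V(p)^* = \pi(\alpha_p(e_I)) = \pi(e_{pI})$ is exactly the covariance condition, and $V(p)^* \pi(e_I) V(p) = \pi(e_{p^{-1} I})$ follows by combining Proposition \ref{prop.condition}(2) with Lemma \ref{lm.right.cov}. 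Conversely, from a representation of $C^*(P)$ one reads off $\pi(e_I)$ and $V(p)$ satisfying the isometric covariant relation. So the two universal $C^*$-algebras coincide.

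With both identifications in hand, Theorem \ref{thm.envelope} applies and yields
\[
C^*_{env}(\cD_P \rtimes_\alpha^{iso} P) = \cD_P \rtimes_\alpha^* P \cong C^*(P).
\]
The main obstacle is not computational but is the hereditary check: one must show that the ring-theoretic identity $p \cdot (p^{-1}I) = I \cap pP$ holds at the level of constructible ideals and transfers correctly through the projection relations in $C^*(P)$. Everything else reduces to comparing two presentations of the same universal object.
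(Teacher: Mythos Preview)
Your proposal is correct and follows the same route as the paper: verify that $(\cD_P,P,\alpha)$ satisfies the standing hypotheses, identify $\cD_P\rtimes_\alpha^* P$ with $C^*(P)$ via their universal properties, and apply Theorem~\ref{thm.envelope}. The paper simply asserts the hypothesis check and the identification as ``easily verified'' and states the corollary as an immediate application, whereas you spell out the hereditary computation $e_{pP}e_I e_{pP}=\alpha_p(e_{p^{-1}I})$ and the matching of Definition~\ref{def.semigroupC} against isometric covariance; nothing in your argument diverges from the intended line.
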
 

\begin{remark} There is a number of examples where the $C^*$-envelope of a non-self-adjoint operator algebra associated with a semigroup is the boundary quotient of the semigroup $C^*$-algebra, a smaller $C^*$-algebra compared to $C^*(P)$. For example, the $C^*$-envelope of the disk algebra $\cA(\mathbb{D})$ is $C(\mathbb{T})$, a quotient of the semigroup $C^*$-algebra $C^*(\mathbb{N})$; the $C^*$-envelope of the non-commutative disk algebra $\fA_k$ is the Cuntz algebra $\cO_k$, a quotient of the semigroup $C^*$-algebra $C^*(\mathbb{F}_k^+)$. However, in our setup, the $C^*$-algebra $\cA$ in the semigroup dynamical system is rigid: it is preserved faithfully in the $C^*$-envelope. For example, in the semigroup dynamical system $(\cD_P, P, \alpha)$, the diagonal $C^*$-algebra $\cD_P$ inside the non-self-adjoint operator algebra $\cD_P\rtimes_\alpha^{iso} P$ is preserved when we take the $C^*$-envelope, and therefore, we obtain the full semigroup $C^*$-algebra $C^*(P)$ instead of its boundary quotient. 
\end{remark} 

\begin{example} Consider the unital semigroup $P=\{1+4n: n\in\mathbb{N}\}$. This is a particular example of a congruence monoid, whose constructible ideals are characterized by Bruce  \cite{Bruce2020}: $\cJ(P)=\{I_a: 2\nmid a\}\cup\{\emptyset\}$ where $I_a=a\mathbb{N} \cap P$. For odd numbers $a$ and $b$, $I_a \cap I_b=I_{\lcm(a,b)}$. Therefore, the semigroup $C^*$-algebra $C^*(P)$ is the universal $C^*$-algebra generated by isometries $\{v_p\}_{p\in P}$ and projections $\{e_a\}_{2\nmid a}$ such that 
\begin{enumerate}
\item $v_pv_q=v_{pq}$,
\item $v_p e_a v_p^* = e_{pa}$,
\item $e_\emptyset=0$ and $e_1=I$,
\item $e_a e_b=e_{\lcm(a,b)}$. 
\end{enumerate}

Now the non-self-adjoint operator algebra $\cD_P\rtimes_\alpha^{iso} P$ is the universal operator algebra generated by isometries $\{V_p\}_{p\in P}$ and projections $\{E_a\}_{2\nmid a}$ such that
\begin{enumerate}
\item $V_pVv_q=V_{pq}$,
\item $V_p E_a  = E_{pa} V_p$,
\item $E_\emptyset=0$ and $E_1=I$,
\item $E_a E_b=E_{\lcm(a,b)}$. 
\end{enumerate}

Corollary \ref{cor.cstarP} implies that $C^*_{env}(\cD_P\rtimes_\alpha^{iso} P)=C^*(P)$. 
\end{example} 

\bibliographystyle{abbrv}
\bibliography{semigroup}

\end{document}